\theoremstyle{plain}
\newtheorem{lem}{Lemma}[section]
\newtheorem{cor}[lem]{Corollary}
\newtheorem{thm}[lem]{Theorem}
\theoremstyle{definition}
\newtheorem{ex}[lem]{Example}
\newtheorem{rem}[lem]{Remark}
\newtheorem{dfn}[lem]{Definition}
\newtheorem{ntt}[lem]{Notation}
\renewcommand{\phi}{\varphi}
\renewcommand{\leq}{\leqslant}
\renewcommand{\geq}{\geqslant}
\renewcommand{\epsilon}{\varepsilon}
\renewcommand{\kappa}{\varkappa}
\DeclareMathOperator{\spec}{Spec}
\DeclareMathOperator{\hocolim}{hocolim}
\DeclareMathOperator{\Hom}{Hom} 
 \DeclareMathOperator{\id}{id}
 \DeclareMathOperator{\Mor}{Mor}
 \DeclareMathOperator{\colim}{colim}
 \DeclareMathOperator{\kr}{Ker}
\DeclareMathOperator{\coker}{Coker} \DeclareMathOperator{\nis}{nis}
\newcommand{\lra}[1]{\bl{#1}\longrightarrow\relax}
\newcommand{\bl}[1]{\buildrel #1\over}
\newcommand{\cc}{\mathcal}
\newcommand{\bb}{\mathbb}
\newcommand{\Fr}{\cc Fr_0(k)}
\newcommand{\op}{{\textrm{\rm op}}}
\newcommand{\wh}{\widehat}
\newcommand{\Fun}{\mathrm{Fun}}
\newcommand{\gmp}{\bb G_m^{\wedge 1}}
\newcommand{\gmpn}{\bb G_m^{\wedge n}}
\newcommand{\uhom}{\underline{\Hom}}
\newcommand{\gmpnl}{\bb G_m^{\wedge {n+1}}}
\begin{document}

\footskip30pt


\title{The triangulated categories of framed bispectra and framed motives}
\author{Grigory Garkusha}
\address{Department of Mathematics, Swansea University, Fabian Way, Swansea SA1 8EN, UK}
\email{g.garkusha@swansea.ac.uk}

\author{Ivan Panin}
\address{St. Petersburg Branch of V. A. Steklov Mathematical Institute,
Fontanka 27, 191023 St. Petersburg, Russia}

\email{paniniv@gmail.com}


\begin{abstract}
An alternative approach to classical Morel--Voevodsky stable motivic homotopy
theory $SH(k)$ is suggested. The triangulated category of framed bispectra $SH_{\nis}^{fr}(k)$ and effective
framed bispectra $SH_{\nis}^{fr,eff}(k)$ are introduced in the paper. Both triangulated categories
only use Nisnevich local equivalences and have nothing to do with any kind of motivic equivalences.
It is shown that $SH_{\nis}^{fr}(k)$ and $SH_{\nis}^{fr,eff}(k)$ recover classical Morel--Voevodsky triangulated
categories of bispectra $SH(k)$ and effective bispectra $SH^{eff}(k)$ respectively.

We also recover $SH(k)$ and $SH^{eff}(k)$ as the triangulated category of framed motivic spectral
functors $SH_{S^1}^{fr}[\cc Fr_0(k)]$ and the triangulated category of framed motives
$\cc {SH}^{fr}(k)$ constructed in the paper.
\end{abstract}

\dedicatory{In memory of A. A. Suslin}

\keywords{Motivic homotopy theory, framed motives, triangulated categories}

\subjclass[2020]{14F42, 18G80, 55P42}

\maketitle

\thispagestyle{empty} \pagestyle{plain}

\newdir{ >}{{}*!/-6pt/@{>}} 


\section{Introduction}

Stable motivic homotopy theory $SH(k)$ over a field $k$ was introduced by Morel and Voevodsky
in~\cite{MV,VoeICM}. One of its equivalent constructions is given by first stabilizing the triangulated
category of Nisnevich sheaves of $S^1$-spectra $SH^{\nis}_{S^1}(k)$
in the $\gmp$-direction arriving at the triangulated category of bispectra $SH_{\nis}(k)$. Then
$SH(k)$ is defined as the triangulated Bousfield localization of $SH_{\nis}(k)$ with respect to the
full localizing subcategory $\cc S$ compactly generated by cones of the arrows
$\{pr_X:X\times\bb A^1\to X\mid X\in Sm/k\}$. Because of this localization we cannot control
anymore stable homotopy types of bispectra by their stable homotopy sheaves $\pi_{*,*}^{\nis}(A)$.
Instead, we need to compute their ``motivic counterparts" $\pi_{*,*}^{\bb A^1}(A):=\pi_{*,*}^{\nis}(L_{\bb A^1}(A))$,
where $L_{\bb A^1}:SH_{\nis}(k)\to SH_{\nis}(k)$ is the localization functor associated
with $\cc S$.

However, it is enourmosly hard in practice to compute stable motivic homotopy types
(and in particular, the sheaves $\pi_{*,*}^{\bb A^1}(A)$) as well as the
Hom-set $SH(k)(A,B):=SH_{\nis}(k)(L_{\bb A^1}(A),L_{\bb A^1}(B))$ between two bispectra $A,B$.
Therefore the computation
of the localization functor $L_{\bb A^1}$ and the full subcategory of
$L_{\bb A^1}$-local objects is of fundamental importance and requires
a new approach to stable motivic homotopy theory as such.

In~\cite{Voe2} Voevodsky introduced framed correspondences, the main purpose of which
was to suggest such a new approach to stable motivic homotopy theory.
Namely, Voevodsky writes in his notes~\cite{Voe2}: ``I hope that the constructions
described in the notes will lead to a new
model of the stable homotopy theory which will be more friendly for
computations questions. Of course one expects that it will be non-trivial to show
that the new and the old models agree".

In this paper explicit computations of the localization functor $L_{\bb A^1}$
and the associated subcategory of $L_{\bb A^1}$-local objects are given.
As an application, we construct a new explicit and entirely local
model for stable motivic homotopy theory $SH(k)$ predicted by Voevodsky.

In more detail,
using Voevodsky's theory of framed correspondences~\cite{Voe2}, the authors introduce
and develop the machinery of framed motives and big framed motives in~\cite{GP1}.
In Theorem~\ref{locfunctor} of the present paper,
an explicit computation of the localization functor $L_{\bb A^1}$ is given.
Namely, it is proven that the functor of big framed motives $\cc M^b_{fr}$ in the sense of~\cite{GP1}
determines a localization functor on $SH_{\nis}(k)$ and, moreover, it is isomorphic to $L_{\bb A^1}$.
Also, an explicit description of the full subcategory of $L_{\bb A^1}$-local
objects in $SH_{\nis}(k)$ is given in Theorem~\ref{locobj}.
Namely, it is equivalent to the full subcategory $SH_{\nis}^{fr}(k)$ introduced in
Definition \ref{deffrsp}. As a consequence, the functor
$F: SH_{\nis}^{fr}(k)\to SH(k)$ which is the identity on the objects is an equivalence of categories
(see Theorem \ref{recover}).
Its quasi-inverse is given by the big framed motive functor $\cc M^{b}_{fr}:SH(k)\to SH_{\nis}^{fr}(k)$
(see Theorem \ref{recover}).

Thus the triangulated category of framed bispectra $SH^{fr}_{\nis}(k)$
recovers $SH(k)$. The main feature of $SH^{fr}_{\nis}(k)$ is that its construction is genuinely
local in the sense that it does not use any kind of motivic equivalences. In
other words, we get rid of motivic equivalences completely making
$SH^{fr}_{\nis}(k)$ more amenable to explicit calculations than Morel--Voevodsky's
$SH(k)$. In particular, if $\cc E$, $\cc F$ are two framed bispectra in
$SH_{\nis}^{fr}(k)$, then a morphism $f:\cc E\to\cc F$
is a stable motivic equivalence in the sense of Morel--Voevodsky
if and only if the induced morphisms of Nisnevich sheaves of stable homotopy groups
$f_*:\pi_{*}^{\nis}(\cc E(q))\to\pi_{*}^{\nis}(\cc F(q))$ are isomorphisms in each weight $q$
(see Corollary~\ref{lemstcor}). Therefore stable motivic weak equivalences between framed
bispectra coincide with naive local equivalences between them. Furthermore, let
$SH_{S^1}(k)$ be the stable motivic homotopy category of $S^1$-spectra. Then
the canonical functor
   $$\Omega^\infty_{\gmp}:SH(k)\to SH_{S^1}(k)$$
has the following explicit and elementary computation in our language. Namely, one takes a framed
bispectrum $\cc E$ to its zeroth weight $\cc E(0)$ (see Corollary~\ref{sigmaomega} for details).
A similar explicit computation in terms of framed bispectra is given for the adjoint functor
   $$\Sigma^\infty_{\gmp}:SH_{S^1}(k)\to SH(k)$$
(see Corollary~\ref{sigmaomega}).

Since the functor $\cc M^b_{fr}$
determines a localization functor on $SH_{\nis}(k)$, which is isomorphic to $L_{\bb A^1}$, we also get that
$\kr\cc M^b_{fr}$ coincides with
the localizing subcategory $\cc S$ of $SH_{\nis}(k)$
compactly generated by the shifted cones of the arrows
$pr_X:\Sigma^\infty_{S^1}\Sigma^\infty_{\gmp}(X\times\bb
A^1)_+\to\Sigma^\infty_{S^1}\Sigma^\infty_{\gmp}X_+$, $X\in Sm/k$.

Next, the triangulated category of effective framed bispectra $SH^{fr,eff}_{\nis}(k)$
is introduced. It is proved that the big framed motive functor induces
a triangle equivalence
   $$\cc M_{fr}^b:SH^{eff}(k)\to SH^{fr,eff}_{\nis}(k).$$
The reader can find the relevant definitions and statements in Section~\ref{secteff}.

The key ingredient in the theory of framed spectra/bispectra and
motivic infinite loop spaces developed by the authors in~\cite{GP1} is the framed motive $M_{fr}(X)$
of a smooth algebraic variety $X\in Sm/k$. $M_{fr}(-)$ is a functor from smooth
algebraic varieties to sheaves of framed $\bb A^1$-local $S^1$-spectra. In
this paper we define the triangulated category $SH_{S^1}^{fr}[\cc Fr_0(k)]$ of certain functors from
smooth $k$-schemes to sheaves of framed $\bb A^1$-local $S^1$-spectra
and show that $SH_{S^1}^{fr}[\cc Fr_0(k)]$ is naturally equivalent to
$SH(k)$ (see Theorem~\ref{spfuncat}). This result also shows how to construct
the framed motive functor $\cc M^{E}_{fr}\in SH_{S^1}^{fr}[\cc Fr_0(k)]$ out of any
bispectrum $E\in SH(k)$. This construction extends the construction of $M_{fr}(-)$
to all motivic bispectra. Precisely,
$M_{fr}(X)$ is recovered as $\cc M^{E}_{fr}(X)$ with $E$ the motivic
sphere bispectrum. In other words, Morel--Voevodsky stable motivic homotopy theory
is recovered from such framed motives functors.
It is important to note that, by definition, the category
$SH_{S^1}^{fr}[\Fr]$ has nothing to do with any sort of motivic
equivalences. It is purely of local nature.
The reader will find the details in Section~\ref{recspfun}.

It is worth mentioning that computational advantages of framed bispectra and framed spectral
functors introduced and studied in this paper are
crucial for the machinery of motivic $\Gamma$-spaces
developed by authors joint with P.~A.~\O stv\ae r in~\cite{GPO}. Motivic
$\Gamma$-spaces in the sense of~\cite{GPO} extend the celebrated Segal
machine of $\Gamma$-spaces~\cite{S} to the world of motivic homotopy theory.

We finish this paper by introducing the triangulated category of framed
motives $\cc {SH}^{fr}(k)$ and show that the triangulated category of
effective bispectra $SH^{eff}(k)$ is naturally triangle equivalent to
$\cc {SH}^{fr}(k)$ (see Theorem~\ref{spfuneff}).

Throughout the paper $k$ is an infinite perfect field.
We denote by $Sm/k$ the category of smooth
separated schemes of finite type over $k$.
We denote by $(Shv_\bullet(Sm/k),\wedge,pt_+)$ the closed symmetric monoidal
category of pointed Nisnevich sheaves on $Sm/k$. If there is no likelihood of confusion we often write
$[X,Y]$ to denote the internal Hom object $\underline{\Hom}(X,Y)$. The category of
pointed motivic spaces $\bb M_\bullet$ is, by definition, the category
$\Delta^{\op}Shv_\bullet(Sm/k)$ of pointed simplicial sheaves. 
Denote by $\Delta[\bullet]$ the standard cosimplicial simpicial set $n\mapsto\Delta[n]$. If there is no
likelihood of confusion, we sometimes regard it as a cosimplicial smooth scheme, where each
$\Delta[n]$ is regarded as the disjoint union $\bigsqcup_{\Delta[n]}\spec(k)$.

\section{The triangulated category of framed bispectra}\label{frbisp}

Stable motivic homotopy theory $SH(k)$ over a field $k$ was introduced by Morel and Voevodsky
in~\cite{MV,VoeICM}. One of its equivalent constructions is given by first stabilizing the triangulated
category of Nisnevich sheaves of $S^1$-spectra $SH^{\nis}_{S^1}(k)$
in the $\gmp$-direction arriving at the triangulated category of bispectra $SH_{\nis}(k)$.
Let $\cc S$ be the full localizing subcategory in $SH_{\nis}(k)$ compactly generated by cones of the arrows
$\{pr_X:X\times\bb A^1\to X\mid X\in Sm/k\}$.
Let $L_{\bb A^1}:SH_{\nis}(k)\to SH_{\nis}(k)$ be the localization functor associated
with $\cc S$.
Then $\cc S=\kr L_{\bb A^1}$ and
$SH(k)$ is defined as $SH_{\nis}(k)/\cc S$. By the localization theory in
compactly generated triangulated categories~\cite{Nee}, $SH(k)$ is equivalent to the
full subcategory of $L_{\bb A^1}$-local objects.

In this section explicit computations of the localization functor $L_{\bb A^1}$
and the full subcategory of
$L_{\bb A^1}$-local objects are given. As an application, we construct a new explicit and entirely local
model of stable motivic homotopy theory $SH(k)$ predicted by Voevodsky.



Let $Sp_{S^1,\bb G_m}(k)$ be the category of $(S^1,\gmp)$-bispectra
in $\bb M_\bullet$, where $\gmp$ is the mapping cone of the 1-section
$\spec(k)_+\to\bb G_{m+}$ in $\bb M_\bullet$. More precisely, $\gmp$
is the pushout of the zigzag
   $$\spec(k)_+\wedge I\xleftarrow{i_0}\spec(k)_+\to\bb G_{m+},$$
in which $I$ is the pointed simplicial set $\Delta[1]$ with basepoint 1.

The category
$Sp_{S^1,\bb G_m}(k)$ comes equipped with the stable projective
local model structure defined as follows. By~\cite{Bl} $\bb
M_\bullet$ comes equipped with the projective local monoidal model
structure in which weak equivalences are Nisnevich local weak equivalences.
Stabilizing the model structure in the $S^1$-direction, we get the category
$Sp_{S^1}(k)$ of motivic $S^1$-spectra equipped with the stable
projective local monoidal model structure, where weak equivalences
are the maps of spectra inducing isomorphisms on sheaves of stable
homotopy groups. Now stabilising the model structure on
$Sp_{S^1}(k)$ in the $\gmp$-direction, we arrive at the stable
projective local model structure on $Sp_{S^1,\bb G_m}(k)$. Its
triangulated homotopy category is denoted by $SH_{\nis}(k)$.

Given a triangulated category $\cc T$, we define a localization in
$\cc T$ as a triangulated endofunctor $L:\cc T\to\cc T$ together
with a natural transformation $\eta:\id\to L$ such that
$L\eta_X=\eta_{LX}$ for any $X$ in $\cc T$ and $\eta$ induces an
isomorphism $LX\cong LLX$. We refer to $L$ as a {\it localization
functor in $\cc T$}. Such a localization functor determines a full
subcategory $\kr L$ whose objects are those $X$ such that $LX=0$.
An object $X\in\cc T$ is said to be {\it $L$-local\/} if $\eta_X:X\to LX$
is an isomorphism.

The computation of localization functors and their full subcategories
of local objects is enormously hard in practice. In particular, if
$\cc T=SH_{\nis}(k)$ and $\cc S$ is the full subcategory of $SH_{\nis}(k)$ compactly generated
by the shifted cones of the arrows
$pr_X:\Sigma^\infty_{S^1}\Sigma^\infty_{\gmp}(X\times\bb
A^1)_+\to\Sigma^\infty_{S^1}\Sigma^\infty_{\gmp}X_+$, $X\in Sm/k$, then the Bousfield
localization theory in compactly generated triangulated categories~\cite{Nee} says that
there exists a localisation functor
   $$L_{\bb A^1}:SH_{\nis}(k)\to SH_{\nis}(k)$$
such that $\cc S=\kr L_{\bb A^1}$.
By definition, the Morel--Voevodsky stable motivic
homotopy category $SH(k)$ is the quotient category $SH_{\nis}(k)/\cc S$.

The computation of the localization functor $L_{\bb A^1}$ and its full subcategory
of local objects is of fundamental importance. The following theorem yields an explicit
computation of $L_{\bb A^1}$.

\begin{thm}\label{locfunctor}
The functor of big framed motives $\cc M^b_{fr}$ in the sense of~\cite{GP1} determines a
localization functor in $SH_{\nis}(k)$. The full subcategory $\kr\cc
M^b_{fr}$ is compactly generated by the shifted cones of the arrows
$pr_X:\Sigma^\infty_{S^1}\Sigma^\infty_{\gmp}(X\times\bb
A^1)_+\to\Sigma^\infty_{S^1}\Sigma^\infty_{\gmp}X_+$, $X\in Sm/k$.
Furthermore, there exists an equivalence of localization functors $\cc M^b_{fr}\simeq L_{\bb A^1}$
and an equivalence of triangulated
categories $SH_{\nis}(k)/\kr\cc M^b_{fr}\simeq SH(k)$.
\end{thm}

We postpone the proof of the theorem. The next goal is to give an explicit computation
of the full subcategory of $L_{\bb A^1}$-local objects in $SH_{\nis}(k)$, which we denote by
$SH^{L_{\bb A^1}}_{\nis}(k)$. To this end,
we introduce and study a full subcategory $SH_{\nis}^{fr}(k)$ in $SH_{\nis}(k)$ which will
give the desired explicit computation of $SH^{L_{\bb A^1}}_{\nis}(k)$ (see Theorem~\ref{locobj}).
To give the definition of $SH_{\nis}^{fr}(k)$ we need some preparations.

We begin with recalling the definition of the category of pointed framed motivic spaces $\mathbb M^{fr}_\bullet$.
Let $\mathrm{Fr}_+(k)$ be the category of framed correspondences as in ~\cite[Section~2]{GP1}. Let
$\mathrm{Pre}^{fr}(k)$ be the category of framed presheaves, that is the category of presheaves of sets
on $\mathrm{Fr}_{+}(k)$.
Let $i: \mathrm{Sm}/k\to\mathrm{Sm}/k_{+}\to\mathrm{Fr}_{+}(k)$ be the composite functor.
Recall from~\cite[Section~2]{GP1} that a framed Nisnevich sheaf on $\mathrm{Sm}/k$ is a framed presheaf
such that its restriction to $\mathrm{Sm}/k$
via the functor $i$
is a Nisnevich sheaf. Let $Shv_\bullet^{fr}(k)$ denote the category
of pointed framed Nisnevich sheaves.
The morphisms in this category are just morphisms of pointed framed presheaves.
The {\it category of pointed framed motivic spaces\/} $\mathbb M^{fr}_\bullet$ is the category of simplicial objects in
$Shv_\bullet^{fr}(k)$. There is a canonically induced faithful functor
$\iota:\mathbb M^{fr}_\bullet \rightarrow \mathbb M_\bullet$ obtained from the composite
$i: \mathrm{Sm}/k\to\mathrm{Sm}/k_{+}\to\mathrm{Fr}_{+}(k)$.

Here is some terminology we will use in the paper. 
Given an object $A\in \mathbb M_\bullet$, a {\it framing of $A$} is an object $A'\in \mathbb M^{fr}_\bullet$ such that $A=\iota(A)$.
An object $A\in \mathbb M_\bullet$ is said to be {\it framed\/} provided that a framing $A'$ of $A$ is chosen. 
If $A,B \in \mathbb M_\bullet$ and $A',B'\in \mathbb M^{fr}_\bullet$ are their framings, then
$\Hom_{\mathbb M^{fr}_\bullet}(A',B')\subseteq \Hom_{\mathbb M_\bullet}(A,B)$. 
One says that a morphism $\phi: A\to B$ respects the framings if $\phi$ is in $\Hom_{\mathbb M^{fr}_\bullet}(A',B')$.

We claim that for every $A\in \mathbb M_\bullet$ and $B\in \mathbb M^{fr}_\bullet$
the internal Hom-object $\underline{\Hom}_{\bb M_\bullet}(A,\iota(B))$ has a canonical framing.

To show the claim, we need some notation. Following~\cite[Section~6]{Voe2} there is a natural pairing
${Fr}_{0}(k)\times {Fr}_{+}(k)\xrightarrow{\otimes}{Fr}_{+}(k)$
taking $(X,Y)$ to $X\times Y$ and $(f,\alpha)$ to $f\times \alpha$. In what follows
this pairing will be used systematically without referring to it.
Here is the first application of the pairing.




For every $B\in \mathbb M^{fr}_\bullet$ the ordinary pointed motivic space
$\iota (B)(X\times\Delta[\bullet]\times -)$ is a contravariant functor in $X$ on the category $Fr_+(k)$.
In detail,
given $\phi\in \mathrm{Fr}_n(X',X)$, an integer $r\geq 0$ and $U\in Sm/k$, put
$\phi^*_{r,U}: \iota (B)(X\times\Delta[r]\times U)\to \iota (B)(X'\times\Delta[r]\times U)$
to be $(\phi \times \id_{\Delta[r]} \times \id_U)^*$.
Denote by $\phi^*_{\bullet,U}$ the family $\phi^*_{r,U}$.
Then $\phi^*_{\bullet,U}: \iota (B)(X\times\Delta[\bullet]\times U)\to \iota (B)(X'\times\Delta[\bullet]\times U)$ is 
a morphism of pointed simplicial sets and the assignment $U\mapsto \phi^*_{\bullet,U}$
determines a morphism between the pointed motivic spaces 
$\iota (B)(X\times\Delta[\bullet]\times -)$ and $\iota (B)(X'\times\Delta[\bullet]\times -)$.
Denote it by $\phi^*_\bullet$.

Recall that
$\underline{\Hom}(A,\iota (B))(X)$ is the simplicial set $\Hom_{\mathbb M_\bullet}(A,\iota (B)(X\times\Delta[\bullet]\times-))$.
Now the assignment 
   $${Fr}_n(X',X)\ni \phi \mapsto [\Hom_{\mathbb M_\bullet}(A, \phi^*_\bullet): 
       \underline{\Hom}(A,\iota (B))(X)\to \underline{\Hom}(A,\iota (B))(X')]$$
determines a canonical framing on $\underline{\Hom}(A,\iota (B))$ as claimed above.


Using the terminology above and the claim, the definition of $SH_{\nis}^{fr}(k)$ is as follows.

\begin{dfn}\label{deffrsp}
We define $SH_{\nis}^{fr}(k)$ as a full subcategory in
$SH_{\nis}(k)$ consisting of those bispectra $\cc E$ satisfying the
following conditions:

\begin{enumerate}
\item each motivic space $\cc E_{i,j}$ of $\cc E$ is framed; 

\item the structure maps $\cc E_{i,j}\to\underline{\Hom}(S^1,\cc E_{i+1,j})$,
$\cc E_{i,j}\to\underline{\Hom}(\gmp,\cc E_{i,j+1})$ preserve framings;

\item for every $j\geq 0$ the framed presheaves of stable homotopy groups $\pi_*(\cc E_{*,j})$
of the $S^1$-spectrum $\cc E_{*,j}$ are stable, radditive and $\bb
A^1$-invariant;

\item (Cancellation Theorem) for every $j\geq 0$ the structure map
$\cc E_{*,j}\to\underline{\Hom}(\gmp,\cc E_{*,j+1})$ of $S^1$-spectra is a stable local equivalence.
\end{enumerate}
\end{dfn}

We should stress that the definition of $SH_{\nis}^{fr}(k)$ is
local in the sense that its morphisms are computed in
$SH_{\nis}(k)$ and has nothing to do with $SH(k)$.

The following theorem yields an explicit description of the full subcategory of
$L_{\bb A^1}$-local objects $SH^{L_{\bb A^1}}_{\nis}(k)$.

\begin{thm}\label{locobj}
Every object of $SH_{\nis}^{fr}(k)$ is $L_{\bb A^1}$-local and the inclusion functor
$SH_{\nis}^{fr}(k)\to SH^{L_{\bb A^1}}_{\nis}(k)$ is an equivalence of triangulated categories.
Furthermore, the functor $\cc M^{b}_{fr}:SH_{\nis}(k)\to SH_{\nis}(k)$ induces an equivalence of triangulated
categories $SH_{\nis}(k)/\kr\cc M^b_{fr}\simeq SH_{\nis}^{fr}(k)$.
\end{thm}

We also have the following

\begin{thm}\label{recover}
The natural functor $F:SH_{\nis}^{fr}(k)\to SH(k)$, which is the
identity on objects, is an equivalence of categories. Its
quasi-inverse is given by the big framed motive functor $\cc
M^{b}_{fr}:SH(k)\to SH_{\nis}^{fr}(k)$ in the sense of~\cite{GP1}.
\end{thm}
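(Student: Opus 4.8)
The plan is to construct the functor $F$ and its quasi-inverse $\cc M^b_{fr}$, and then verify that they are mutually inverse equivalences. First I would recall from \cite{GP1} the construction of the big framed motive functor $\cc M^b_{fr}$: given a bispectrum $E$, one forms in each $\gmp$-degree $j$ the framed motive of the corresponding $S^1$-spectrum, which by construction produces, levelwise in $j$, an $S^1$-spectrum whose framed presheaves of stable homotopy groups are stable, radditive, and $\bb A^1$-invariant. One then needs to check that $\cc M^b_{fr}(E)$ satisfies all four conditions of Definition~\ref{deffrsp}; conditions (1) and (2) are immediate from the framed nature of the construction, condition (3) is the content of the additivity and $\bb A^1$-invariance results of \cite{GP1}, and condition (4) is exactly the cancellation theorem for framed motives, which I would cite. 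This shows $\cc M^b_{fr}$ lands in $SH^{fr}_{\nis}(k)$.

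Next I would establish the two natural transformations comparing $F\circ\cc M^b_{fr}$ and $\cc M^b_{fr}\circ F$ with the respective identities. For $F\circ\cc M^b_{fr}\simeq\id_{SH(k)}$: the key input is that for any bispectrum $E$ the canonical map $E\to\cc M^b_{fr}(E)$ is a \emph{motivic} (stable $\bb A^1$-)equivalence — this is the main theorem of the framed motives machinery of \cite{GP1}, asserting that the framed motive computes the stable motivic homotopy type. Hence after passing to $SH(k)$ this map becomes an isomorphism, giving the natural isomorphism $\id_{SH(k)}\xrightarrow{\sim}F\circ\cc M^b_{fr}$. For the other composite, I would use that on objects $F$ is the identity, so $\cc M^b_{fr}\circ F$ sends an object $\cc E\in SH^{fr}_{\nis}(k)$ to $\cc M^b_{fr}(\cc E)$, and the unit map $\cc E\to\cc M^b_{fr}(\cc E)$ is a morphism in $SH_{\nis}(k)$; one must show it is a \emph{Nisnevich-local} (stable) equivalence, i.e. an isomorphism in $SH_{\nis}(k)$. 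This is where conditions (3) and (4) of Definition~\ref{deffrsp} get used: for an object already satisfying them, the unit of the framed motive construction is a levelwise stable local equivalence in each $\gmp$-degree — one reduces, $\gmp$-degree by $\gmp$-degree, to the statement that for an $S^1$-spectrum with stable radditive $\bb A^1$-invariant framed homotopy presheaves the canonical map into its framed motive is a stable local equivalence, which again is a theorem of \cite{GP1}.

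The final step is bookkeeping: from the two natural isomorphisms above, $F$ is fully faithful and essentially surjective, hence an equivalence, with quasi-inverse $\cc M^b_{fr}$. I would also check compatibility with the triangulated structure: $F$ is exact because it is the restriction of the identity functor on the triangulated category $SH_{\nis}(k)$ (once one knows $SH^{fr}_{\nis}(k)$ is a triangulated subcategory — the conditions (1)--(4) are stable under shifts and cones, which should be noted), and $\cc M^b_{fr}$ is exact as a left-derived/localization functor.

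The main obstacle I expect is the verification that the unit map $\cc E\to\cc M^b_{fr}(\cc E)$ is a \emph{Nisnevich}-local equivalence when $\cc E$ already lies in $SH^{fr}_{\nis}(k)$ — in general it is only a motivic equivalence, and upgrading this to a local equivalence is precisely where the hypotheses of $\bb A^1$-invariance of the framed homotopy presheaves (condition (3)) together with the cancellation property (condition (4)) must be combined. Concretely one is comparing, in each fixed $\gmp$-degree, an $S^1$-spectrum $\cc E_{*,j}$ with the framed motive of a space that is \emph{already} $\bb A^1$-local in the framed sense, so the $\bb A^1$-localization built into $\cc M^b_{fr}$ does nothing new at the level of sheaves of homotopy groups; making this precise, and ensuring it is compatible across $\gmp$-degrees via condition (4), is the technical heart of the argument and will rely essentially on the structural theorems for framed motives proved in \cite{GP1}.
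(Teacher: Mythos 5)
Your overall strategy (show $\cc M^b_{fr}$ lands in $SH^{fr}_{\nis}(k)$, then exhibit two natural isomorphisms) is reasonable, and your first natural isomorphism $F\circ\cc M^b_{fr}\simeq\id_{SH(k)}$ is exactly the paper's essential-surjectivity step, resting on the motivic equivalence $E^c\to\cc M^b_{fr}(E)$ from \cite{GP1}. The gap is in the step you yourself flag as the technical heart: showing that the unit $\cc E\to\cc M^b_{fr}(\cc E)$ is a \emph{local} equivalence for $\cc E\in SH^{fr}_{\nis}(k)$. Your proposed reduction does not work as stated. First, the ``$\gmp$-degree by $\gmp$-degree'' reduction ignores that $\cc M^b_{fr}$ involves the stabilization $\Theta^\infty_{\gmp}$, which mixes the weights, so the $j$-th weight of $\cc M^b_{fr}(\cc E)$ is not the framed motive of $\cc E(j)$. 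Second, and more seriously, the statement you invoke --- that for an $S^1$-spectrum whose stable homotopy presheaves are already framed, radditive, stable and $\bb A^1$-invariant the canonical map into its framed motive is a stable local equivalence --- is not a theorem of \cite{GP1} and is implausible as stated: $C_*Fr$ \emph{freely} adjoins framed correspondences rather than recognizing pre-existing ones, so even for a scheme $X$ the map $\Sigma^\infty_{S^1}X_+\to M_{fr}(X)$ drastically changes the local homotopy type (that is the whole point of framed motives). Nothing in conditions (3)--(4) of Definition~\ref{deffrsp} by itself makes this unit a local equivalence before $\gmp$-stabilization.

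The missing ingredient, which the paper isolates as Lemmas~\ref{lemprem} and~\ref{lemst}, is that for $\cc E'\in SH^{fr}_{\nis}(k)$ a levelwise stable \emph{local} fibrant replacement $\cc E'_f$ is already \emph{motivically} fibrant (this uses strict homotopy invariance of the associated Nisnevich sheaves from \cite{GP4} together with the cancellation condition (4)). With this in hand the paper proves full faithfulness of $F$ directly: $\Hom_{SH(k)}(\cc E,\cc E')$ and $\Hom_{SH_{\nis}(k)}(\cc E,\cc E')$ are both computed as homotopy classes $[\cc E^c,\cc E'_f]$, so they coincide --- no analysis of the unit map is needed. If you prefer to keep your route, the same lemma rescues it: $\eta_{\cc E}$ is a motivic equivalence between two objects of $SH^{fr}_{\nis}(k)$, and a motivic equivalence between bispectra whose local fibrant replacements are motivically fibrant is automatically a local equivalence. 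Either way, you must prove (or locate) this fibrancy statement; citing a nonexistent ``unit is a local equivalence'' theorem leaves a genuine hole.
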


We postpone the proofs of Theorems~\ref{locobj} and~\ref{recover} but first we need some preparations.

\begin{ntt}\label{weight}
Given a bispectrum $\cc E\in Sp_{S^1,\bb G_m}(k)$, we shall also
write $\cc E=(\cc E(0),\cc E(1),\ldots)$, where $\cc E(j)$ is the
motivic $S^1$-spectrum with spaces $\cc E(j)_i:=\cc E_{i,j}$, $i\geq
0$. We also call $\cc E(j)$ the {\it $j$-th weight of $\cc E$}.
\end{ntt}

For the convenience of the reader we recall
from~\cite[Section~12]{GP1} the definition of the big framed motive.
For any bispectrum $E$, let $E^c$ be its cofibrant replacement in
the projective model structure. Then $E^c$ consists of motivic
spaces $E^c_{i,j}$ which are sequential colimits of simplicial
smooth $k$-schemes. We then take $C_*Fr(E^c_{i,j})$ at every entry
and get a bispectrum $C_*Fr(E^c)$, where $C_*Fr$ is the canonical
functor from motivic spaces to $\bb A^1$-invariant framed motivic
spaces (see~\cite{GP1} for details). Finally, the big framed motive
$\cc M^{b}_{fr}(E)$ is, by definition, the bispectrum
$\Theta^\infty_{\gmp}\Theta_{S^1}^\infty C_*Fr(E^c)$ obtained from
$C_*Fr(E^c)$ by the stabilization in the $S^1$- and
$\gmp$-directions. $\cc M^{b}_{fr}(E)$ is functorial in $E$ and
converts stable motivic equivalences of bispectra into level local
equivalences (see~\cite[12.4]{GP1}). Moreover, the canonical arrow
   \begin{equation*}\label{alphaE}
    \alpha_E:E^c\to\cc M^{b}_{fr}(E)
   \end{equation*}
is a stable motivic equivalence by~\cite[12.1-12.2]{GP1}. Also, the zigzag
   \begin{equation}\label{zigzagE}
    \eta_E:E\xleftarrow{\tau_E}E^c\xrightarrow{\alpha_E}\cc M^{b}_{fr}(E)
   \end{equation}
gives a natural transformation of endofunctors in $SH_{\nis}(k)$
   \begin{equation}\label{nattran}
    \eta:\id\to\cc M^{b}_{fr},
   \end{equation}
because $\tau_E$ is a level weak equivalence by construction. Note
that $\eta$ can also be regarded as a natural transformation of
endofunctors in $SH(k)$, in which case it is an isomorphism.

If there is no likelihood of confusion, we will also regard $\gmp$ as a simplicial smooth scheme
from $\Delta^{\op}Fr_0(k)$, where $Fr_0(k)$ is the category of framed correspondences of level 0
in the sense of Voevodsky~\cite{Voe2}.
Recall that the objects of $Fr_0(k)$ are those of $Sm/k$ and the morphism sets
$\Hom_{Shv_\bullet(Sm/k)}(U_+,V_+)$.

Here are some examples of objects of $SH_{\nis}^{fr}(k)$.

\begin{ex}
(1) A typical example of an object of $SH_{\nis}^{fr}(k)$ is the
bispectrum
   $$M_{fr}^{\bb G}(X)=(M_{fr}(X),M_{fr}(X\times\gmp),M_{fr}(X\times\bb G^{\wedge 2}_m),\ldots),\quad X\in Sm/k,$$
consisting of twisted framed motives of $X$ (see~\cite{GP1} for
details). Recall from~\cite{GP1} that $M_{fr}(X):=C_*Fr(\Sigma^\infty_{S^1}X_+)$.
By~\cite[11.1]{GP1} the canonical morphism of bispectra
$\Sigma^\infty_{S^1}\Sigma^\infty_{\gmp}X_+\to M_{fr}^{\bb G}(X)$ is
an isomorphism in $SH(k)$.

(2) More generally, let $A$ be an $S^1$-spectrum such that every entry $A_i$
of $A$ is a sequential colimit of $k$-smooth simplicial schemes. Then
$C_*Fr(\Sigma^\infty_{\gmp}A)$ is in $SH^{fr}_{\nis}(k)$ (we use here Lemma~\ref{yyy}).

(3) Another example of an object of $SH_{\nis}^{fr}(k)$ is the
bispectrum
   $$M^{\bb G}(X):=(M(X),M(X\times\gmp),M(X\times\bb G^{\wedge 2}_m),\ldots),\quad X\in Sm/k,$$
where each weight $M(X\times\bb G^{\wedge j}_m)$ is the
Eilenberg--Mac~Lane $S^1$-spectrum associated with the Voevodsky's
motivic complex $C_*\bb Z_{tr}(X\times\bb G^{\wedge j}_m)$. Note
that $M^{\bb G}(pt)$ represents motivic cohomology in $SH(k)$.

(4) More generally, let $\cc A$ be a strict $V$-category of
correspondences in the sense of~\cite{GG} admitting a functor of
categories $F:Fr_*(k)\to\cc A$ such that $F$ is identity on objects
and $F(\sigma_X)=\id_X$ for all $X\in Sm/k$. Then the bispectrum
   $$M_{\cc A}^{\bb G}(X)=(M_{\cc A}(X),M_{\cc A}(X\times\gmp),M_{\cc A}(X\times\bb G^{\wedge 2}_m),\ldots),$$
in which each weight $M_{\cc A}(X\times\bb G^{\wedge j}_m)$ is the
Eilenberg--Mac~Lane $S^1$-spectrum associated with the complex
$C_*\cc A(-,X\times\bb G^{\wedge j}_m)_{\nis}$, is an object of
$SH_{\nis}^{fr}(k)$.
\end{ex}

\begin{lem}\label{lemprem}
Let $\cc X$ be a $S^1$-spectrum with presheaves of stable homotopy
groups $\pi_*(\cc X)$ being framed stable radditive and $\bb
A^1$-invariant. Let $\cc X_f$ be a spectrum obtained from $\cc X$ by
taking a local stable fibrant replacement in the stable local
projective model structure of $Sp_{S^1}(k)$. Then $\cc X_f$ is
motivically fibrant.
\end{lem}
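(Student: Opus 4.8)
The plan is to show that the locally fibrant $S^1$-spectrum $\cc X_f$ satisfies the extra conditions characterizing motivically fibrant $S^1$-spectra, namely Nisnevich descent (which comes for free from local fibrancy) together with $\bb A^1$-invariance of the represented cohomology theory. Since a locally fibrant $S^1$-spectrum is motivically fibrant precisely when it is $\bb A^1$-local, and since $\cc X_f$ is by construction level-wise and stably Nisnevich-local, the whole content is to verify that $\cc X_f$ is $\bb A^1$-local. First I would reduce this to a statement about sheaves of stable homotopy groups: a locally fibrant $S^1$-spectrum $\cc Y$ is $\bb A^1$-local if and only if for every $X\in Sm/k$ the projection $X\times\bb A^1\to X$ induces an isomorphism $\pi^{\nis}_n(\cc Y)(X)\xrightarrow{\ \sim\ }\pi^{\nis}_n(\cc Y)(X\times\bb A^1)$ for all $n$; equivalently, the Nisnevich sheaves $\pi^{\nis}_n(\cc Y)$ are strictly $\bb A^1$-invariant in the sense that their Nisnevich cohomology presheaves are $\bb A^1$-invariant.

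Next I would identify $\pi^{\nis}_n(\cc X_f)$ with the Nisnevich sheafification of $\pi_n(\cc X)$. The local stable fibrant replacement $\cc X\to\cc X_f$ is a stable local equivalence, hence induces isomorphisms on Nisnevich sheaves of stable homotopy groups, so $\pi^{\nis}_n(\cc X_f)\cong a_{\nis}\pi_n(\cc X)$. By hypothesis $\pi_n(\cc X)$ is a framed presheaf which is stable, radditive and $\bb A^1$-invariant. Here is where I invoke the central input from the theory of framed motives: a framed presheaf of abelian groups which is radditive and $\bb A^1$-invariant has a Nisnevich sheafification that is again $\bb A^1$-invariant, and moreover is \emph{strictly} $\bb A^1$-invariant — this is the Garkusha--Panin / Morel-type strict homotopy invariance theorem for framed sheaves over an infinite perfect field $k$ (cf.~\cite{GP1} and the framework of~\cite{GG}). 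Thus each $\pi^{\nis}_n(\cc X_f)$ is a strictly $\bb A^1$-invariant Nisnevich sheaf. Feeding this back, the projection $X\times\bb A^1\to X$ induces isomorphisms on all $\pi^{\nis}_n(\cc X_f)(X)$, which gives $\bb A^1$-locality of $\cc X_f$, and combined with its local fibrancy yields motivic fibrancy.

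The main obstacle is precisely the passage from $\bb A^1$-invariance of the presheaf $\pi_n(\cc X)$ to strict $\bb A^1$-invariance (i.e.\ $\bb A^1$-invariance of the associated Nisnevich cohomology presheaves) of its sheafification; this is not formal and is exactly the point where the framed structure is essential and where one must use that $k$ is infinite and perfect. I would handle this by citing the relevant strict homotopy invariance and sheafification results for $\bb A^1$-invariant radditive framed presheaves from~\cite{GP1}, under which the sheaves $\pi^{\nis}_n(\cc X_f)$ inherit strict $\bb A^1$-invariance, and then assembling a standard hypercohomology spectral sequence argument to conclude that the $S^1$-spectrum $\cc X_f$ represents an $\bb A^1$-invariant cohomology theory on $Sm/k$ and is therefore motivically fibrant. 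The remaining verifications — that local fibrancy supplies Nisnevich excision and that $\bb A^1$-locality plus local fibrancy is the definition of motivic fibrancy for $S^1$-spectra — are routine and I would only indicate them.
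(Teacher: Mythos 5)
Your proposal follows essentially the same route as the paper: the key input in both cases is strict $\bb A^1$-invariance of the Nisnevich sheaves associated with the framed, radditive, $\bb A^1$-invariant presheaves $\pi_*(\cc X)$ (the paper cites \cite[1.1]{GP4}), combined with the fact that a stably locally fibrant $S^1$-spectrum whose sheaves of stable homotopy groups are strictly homotopy invariant is motivically fibrant (\cite[7.1]{GP1}, which is exactly your descent spectral sequence step). The only detail the paper adds is writing $\cc X_f$ as $\hocolim_{n\to-\infty}(\cc X_{\geq n})_f$ over truncations, so that the spectral-sequence argument is applied to (shifted) connective spectra where convergence is unproblematic, and then passing to the sequential homotopy colimit --- a point your ``standard hypercohomology spectral sequence argument'' silently needs when $\cc X$ is not bounded below.
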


\begin{proof}
We have $\cc X_f=\hocolim_{n\to-\infty}(\cc X_{\geq n})_f$, where
$(\cc X_{\geq n})_f$ is the $n$th truncation of $\cc X$ in
$Sp_{S^1}(k)$. $\cc X$ has homotopy invariant, stable radditive
presheaves with framed correspondences of stable homotopy groups
$\pi_*(\cc X)$. If
$(\cc X_{\geq n})_f$ is a stable local replacement of $\cc X_{\geq
n}$, the proof of~\cite[7.4]{GP1} shows that $(\cc X_{\geq n})_f$ is
motivically fibrant, and hence so is $\cc X_f$.
\end{proof}

\begin{lem}\label{lemst}
Let $\cc E=(\cc E(0),\cc E(1),\ldots)$ be a bispectrum in
$SH_{\nis}^{fr}(k)$ and let $\cc E_f=(\cc E(0)_f,\cc E(1)_f,\ldots)$ be a
bispectrum obtained from $\cc E$ by taking a local stable fibrant
replacement of each weight $\cc E(j)$ in the stable local projective
model structure of $Sp_{S^1}(k)$. Then $\cc E_f$ is motivically fibrant.
\end{lem}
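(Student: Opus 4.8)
The plan is to reduce the statement about the bispectrum $\cc E_f$ to a statement about each of its weights, and then apply Lemma~\ref{lemprem} weight-by-weight together with the Cancellation condition (4) in Definition~\ref{deffrsp}. Recall that a bispectrum is motivically fibrant precisely when (a) each weight $\cc E(j)_f$ is a motivically fibrant $S^1$-spectrum, and (b) the adjoint structure maps $\cc E(j)_f\to\uhom(\gmp,\cc E(j+1)_f)$ are level (equivalently stable) motivic equivalences of $S^1$-spectra, in fact motivic fibrations after fibrant replacement. So the proof naturally splits into verifying (a) and (b).

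First I would treat (a). Condition (3) of Definition~\ref{deffrsp} says that for every $j\geq 0$ the framed presheaves of stable homotopy groups $\pi_*(\cc E(j))=\pi_*(\cc E_{*,j})$ are stable, radditive and $\bb A^1$-invariant. This is exactly the hypothesis of Lemma~\ref{lemprem} applied to $\cc X=\cc E(j)$. Hence $\cc E(j)_f$, the stable local fibrant replacement of $\cc E(j)$ in $Sp_{S^1}(k)$, is motivically fibrant for each $j$. That disposes of (a) immediately.

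Next I would treat (b). Here the key input is condition (4), the Cancellation Theorem: for every $j\geq 0$ the structure map $\cc E(j)\to\uhom(\gmp,\cc E(j+1))$ of $S^1$-spectra is a stable local equivalence. Applying the local stable fibrant replacement functor, which sends stable local equivalences to level weak equivalences of fibrant spectra, we get that $\cc E(j)_f\to\uhom(\gmp,\cc E(j+1))_f$ is a local weak equivalence. The remaining point is to identify $\uhom(\gmp,\cc E(j+1))_f$ with $\uhom(\gmp,\cc E(j+1)_f)$ up to weak equivalence; this follows because $\gmp$ is a finitely presented (cofibrant) object, so $\uhom(\gmp,-)$ preserves stable local weak equivalences between stably fibrant spectra and carries fibrant objects to fibrant objects. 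Since by part (a) each $\cc E(j+1)_f$ is moreover \emph{motivically} fibrant, $\uhom(\gmp,\cc E(j+1)_f)$ is again motivically fibrant, and the composite $\cc E(j)_f\to\uhom(\gmp,\cc E(j+1)_f)$ is a weak equivalence between motivically fibrant $S^1$-spectra. Combining (a) and (b), $\cc E_f$ is a motivically fibrant bispectrum.

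The main obstacle is the last identification in (b): making precise that forming internal function spectra out of $\gmp$ commutes, up to weak equivalence, with local stable fibrant replacement, and that it preserves both local and motivic fibrancy. This requires knowing that $\gmp$ is compact/finitely presented and that the relevant model structures are cellular/left proper so that $\uhom(\gmp,-)$ is a right Quillen functor with the needed homotopical behaviour; these are standard facts about the stable local projective model structure on $Sp_{S^1}(k)$, and once they are invoked the argument goes through. Everything else is a direct citation of Lemma~\ref{lemprem} and the defining conditions (3) and (4) of $SH_{\nis}^{fr}(k)$.
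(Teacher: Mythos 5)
Your overall architecture is exactly the paper's: (a) each weight $\cc E(j)_f$ is motivically fibrant by Lemma~\ref{lemprem} together with condition (3) of Definition~\ref{deffrsp}, and (b) the induced maps $\cc E(j)_f\to\uhom(\gmp,\cc E(j+1)_f)$ are weak equivalences of motivically fibrant $S^1$-spectra, obtained by comparing with the structure maps of $\cc E$ via condition (4). Part (a) is fine as written.

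The gap is in your justification of the key identification in (b). You need that $\uhom(\gmp,\cc E(j+1))\to\uhom(\gmp,\cc E(j+1)_f)$ is a stable local equivalence (equivalently, that $\uhom(\gmp,\cc E(j+1)_f)$ serves as a local fibrant replacement of $\uhom(\gmp,\cc E(j+1))$), and you argue this from $\gmp$ being finitely presented/cofibrant and $\uhom(\gmp,-)$ being right Quillen. But a right Quillen functor is only guaranteed to preserve weak equivalences \emph{between fibrant objects}, and the source $\cc E(j+1)$ is not fibrant; moreover the underived functor $\uhom(\gmp,-)$ genuinely fails to preserve arbitrary stable local equivalences, since the sections of $\uhom(\gmp,X)$ over $U$ are built from $X(\bb G_m\times U)$ and a Nisnevich-local triviality statement on $\bb G_m\times U$ cannot in general be refined to one pulled back from a Nisnevich cover of $U$. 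So the ``standard model-categorical facts'' you invoke do not close the argument. This is exactly where the framed hypotheses must enter a second time: the paper cites the sublemma of \cite[Section~12]{GP1}, whose proof uses that the homotopy presheaves of $\cc E(j+1)$ are framed, stable, radditive and $\bb A^1$-invariant, so that their Nisnevich sheaves are strictly $\bb A^1$-invariant and one has the contraction formula $\pi_*^{\nis}(\Omega_{\gmp}(\cc E(j+1)_f))=(\pi_*^{\nis}(\cc E(j+1)))_{-1}$, from which the needed local equivalence follows. You correctly identified this step as the main obstacle, but the resolution is not formal nonabelian model category theory; it is the framed-transfer input, and your proof should cite (or reprove) that sublemma rather than compactness of $\gmp$.
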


\begin{proof}
By Lemma~\ref{lemprem} each $S^1$-spectrum $\cc E(j)_f$ is
motivically fibrant. Consider a commutative diagram of $S^1$-spectra
   $$\xymatrix{\cc E(j)\ar[r]\ar[d]&\underline{\Hom}(\gmp,\cc E(j+1))\ar[d]\\
               \cc E(j)_f\ar[r]&\underline{\Hom}(\gmp,\cc E(j+1)_f)}$$
The upper arrow and the left vertical arrow are stable local
equivalences by assumption. The right vertical arrow is a stable
local equivalence by the sublemma of~\cite[Section~12]{GP1}. We see
that the lower arrow is a stable local equivalence between
motivically fibrant $S^1$-spectra. We conclude that it is a
sectionwise weak equivalence as well, and hence $\cc E$ is a motivically
fibrant bispectrum.
\end{proof}

If $E$ is a bispectrum and $p,q$ are integers, recall that $\pi_{p,q}^{\bb A^1}(E)$ is the
sheaf associated to the presheaf
   $$U\longmapsto SH(k)(\Sigma^\infty_{\gmp}\Sigma^\infty_{S^1}U_+\wedge S^{p-q}\wedge\bb G_m^{\wedge q},E).$$
Given a presheaf of Abelian groups $F$, denote by $F_{-1}$ the presheaf mapping $U\in Sm/k$
to the kernel of the evaluation at $1: F(U\times\bb G_m)\to F(U)$. The presheaf $F_{-q}$, $q>1$,
is defined recursively.

The next statement says that the sheaves $\pi_{*,*}^{\bb A^1}(E)$, where $E$ is
a framed bispectrum, are computed
in terms of ordinary Nisnevich sheaves of stable homotopy groups for weighted $S^1$-spectra of $E$.

\begin{cor}\label{lemstcor}
Let $\cc E=(\cc E(0),\cc E(1),\ldots)$ be a bispectrum in
$SH_{\nis}^{fr}(k)$ and $p,q\in\bb Z$. Then $\pi_{p,q}^{\bb A^1}(\cc E)=\pi_{p-q}^{\nis}(\cc E(|q|))$ if $q\leq 0$
and $\pi_{p,q}^{\bb A^1}(\cc E)=(\pi_{p-q}^{\nis}(\cc E(0)))_{-q}$ if $q>0$, where $|q|$ is the modulus of $q$.
In particular, if $\cc F=(\cc F(0),\cc F(1),\ldots)$ is another bispectrum in
$SH_{\nis}^{fr}(k)$, then an ordinary morphism of motivic bispectra $f:\cc E\to\cc F$
is a stable motivic equivalence in the sense of Morel--Voevodsky
if and only if the induced morphisms of Nisnevich sheaves
$f_*:\pi_{*}^{\nis}(\cc E(q))\to\pi_{*}^{\nis}(\cc F(q))$ are isomorphisms in each weight $q$.
\end{cor}

\begin{proof}
Lemma~\ref{lemst} implies $\pi_{p,q}^{\bb A^1}(\cc E)=\pi_{p-q}^{\nis}(\cc E(|q|))$ if $q\leq 0$
and $\pi_{p,q}^{\bb A^1}(\cc E)=\pi_{p-q}^{\nis}(\Omega_{\bb G_m^{\wedge q}}(\cc E(0)_f))$ if $q>0$.
The proof of the sublemma in~\cite[Section~12]{GP1} shows that
   $$\pi_{p-q}^{\nis}(\Omega_{\bb G_m^{\wedge q}}(\cc E(0)_f))
     =(\pi_{p-q}^{\nis}(\cc E(0)_f))_{-q}=(\pi_{p-q}^{\nis}(\cc E(0)))_{-q},$$
as required.
\end{proof}

\begin{proof}[Proof of Theorem~\ref{recover}]
Given $\cc E,\cc E'\in SH_{\nis}^{fr}(k)$, one has
   $$\Hom_{SH(k)}(\cc E,\cc E')=\Hom_{Sp_{S^1,\bb G_m}(k)}(\cc E^c,\cc E'_{m,f})/\sim,$$
where $\sim$ refers to the naive homotopy, $\cc E^c$ is a cofibrant
replacement of $\cc E$ and $\cc E'_{m,f}$ is a motivically fibrant
replacement of $\cc E'$. By Lemma~\ref{lemst} $\cc E'_{m,f}$ can be
computed as $\cc E'_{f}$, i.e. as a level stable local fibrant
replacement of $\cc E'$. Since $\cc E^c_{i,j}\to\cc E_{i,j}$ is a
weak equivalence in $\bb M_*$ for all $i,j\geq 0$ and
   $$\Hom_{SH^{fr}_{\nis}(k)}(\cc E,\cc E')=\Hom_{Sp_{S^1,\bb G_m}(k)}(\cc E^c,\cc E'_{f})/\sim,$$
it follows that $F:SH_{\nis}^{fr}(k)\to SH(k)$ is fully faithful.

Given any $E\in SH(k)$ let $\cc M^{b}_{fr}(E)$ be its big framed
motive in the sense of~\cite{GP1}. Then $\cc M^{b}_{fr}(E)\in
SH^{fr}_{\nis}(k)$ and the zigzag $\eta_E$~\eqref{zigzagE} yields an
isomorphism $E\cong \cc M^{b}_{fr}(E)$ in $SH(k)$. We conclude that
$F$ is an equivalence of categories, as was to be shown.
\end{proof}

Denote by $SH_{S^1}(k)$ the stable motivic homotopy category of $S^1$-spectra.
There is a canonical pair of adjoint functors
   $$\Sigma^\infty_{\gmp}:SH_{S^1}(k)\rightleftarrows SH(k):\Omega^\infty_{\gmp}.$$
The following result is a consequence of Theorem~\ref{recover} and~\cite[12.1]{GP1}.

\begin{cor}\label{sigmaomega}
Let $F:SH_{\nis}^{fr}(k)\to SH(k)$ be the equivalence of Theorem~\ref{recover}. Then
the composite functor $\Omega^\infty_{\gmp}\circ F:SH_{\nis}^{fr}(k)\to SH_{S^1}(k)$
is equivalent to the functor taking a bispectrum $\cc E=(\cc E(0),\cc E(1),\ldots)\in SH_{\nis}^{fr}(k)$
to its zeroth weight $\cc E(0)$. In turn, the composite functor
$\cc M_{fr}^b\circ\Sigma^\infty_{\gmp}:SH_{S^1}(k)\to SH_{\nis}^{fr}(k)$
is equivalent to the functor taking a motivic $S^1$-spectrum $E\in SH_{S^1}(k)$ to the
framed bispectrum $C_*Fr(\Sigma^\infty_{\gmp}E^c)$.
\end{cor}

\begin{proof}[Proof of Theorem~\ref{locfunctor}]
We define a natural transformation $\eta:\id\to\cc M^b_{fr}$ as
in~\eqref{nattran}. Consider a commutative diagram of bispectra
   $$\xymatrix{E&E^c\ar[l]_{\tau_E}\ar[r]^{\alpha_E}&\cc M^b_{fr}(E)\\
               E^c\ar[u]^{\tau_E}\ar[d]_{\alpha_E}
               &(E^c)^c\ar[u]^{\tau_{E^c}}\ar[r]^{(\alpha_{E})^c}\ar[l]_{(\tau_{E})^c}\ar[d]_{\alpha_{E^c}}
               &\cc M^b_{fr}(E)^c\ar[d]^{\alpha_{\cc M^b_{fr}(E)}}\ar[u]_{\tau_{\cc M^b_{fr}(E)}}\\
               \cc M^b_{fr}(E)&\cc M^b_{fr}(E^c)\ar[l]^{\cc M^b_{fr}(\tau_E)}\ar[r]_(.4){\cc M^b_{fr}(\alpha_E)}
               &\cc M^b_{fr}(\cc M^b_{fr}(E))}$$
The left vertical and upper horizontal zigzags equal $\eta_E$. The
lower horizontal zigzag equals $\cc M^b_{fr}(\eta_E)$. In turn, the
right vertical zigzag equals $\eta_{\cc M^b_{fr}(E)}$. It follows
that $\eta_{\cc M^b_{fr}(E)}\circ\eta_E$ equals $\cc
M^b_{fr}(\eta_E)\circ\eta_E$ in $\Mor(SH(k))$. As we have noticed above,
$\eta_E$ is an isomorphism in $SH(k)$, and hence $\eta_{\cc
M^b_{fr}(E)}=\cc M^b_{fr}(\eta_E)$ in $SH(k)$. Since the
functor $F:SH_{\nis}^{fr}(k)\to SH(k)$ of Theorem~\ref{recover} is
an equivalence, we see that $\eta_{\cc M^b_{fr}(E)}=\cc
M^b_{fr}(\eta_E)$ in $SH_{\nis}^{fr}(k)$, because both zigzags are
images of $F$. Since $\cc M^b_{fr}$
converts stable motivic equivalences to level local equivalences
by~\cite[12.4]{GP1}, it follows that $\eta_{\cc M^b_{fr}(E)}$, $\cc
M^b_{fr}(\eta_E)$ are zigzags of level local equivalences.
We have also verified that $\eta$ induces an
isomorphism $\cc M^b_{fr}(E)\cong\cc M^b_{fr}(\cc M^b_{fr}(E))$. So
$\cc M^b_{fr}$ determines a localization functor in $SH_{\nis}(k)$.

Next, let $\cc S$ denote the full subcategory of $SH_{\nis}(k)$
compactly generated by the shifted cones of the arrows
$pr_X:\Sigma^\infty_{S^1}\Sigma^\infty_{\gmp}(X\times\bb
A^1)_+\to\Sigma^\infty_{S^1}\Sigma^\infty_{\gmp}X_+$, $X\in Sm/k$.
By definition, $SH(k)$ is the quotient category $SH_{\nis}(k)/\cc
S$. Therefore, a bispectrum $E$ is isomorphic to zero in $SH(k)$ if
and only if it is in $\cc S$. By~\cite[12.4]{GP1} $\eta:\id\to\cc
M^b_{fr}$ is an isomorphism of endofunctors in $SH(k)$. We see
that $\kr\cc M^b_{fr}=\cc S=\kr L_{\bb A^1}$. It follows that $\cc M^b_{fr}$
induces an equivalence of triangulated categories
$SH_{\nis}(k)/\kr\cc M^b_{fr}\simeq SH(k)$ and there is an equivalence
of localization functors $\cc M^b_{fr}\simeq L_{\bb A^1}$.
\end{proof}

\begin{proof}[Proof of Theorem~\ref{locobj}]
Let $\cc E$ be in $SH_{\nis}^{fr}(k)$. Then $\cc E$ is $L_{\bb A^1}$-local if
and only if $\Hom_{SH_{\nis}}(k)(S,\cc E)=0$ for all $S\in\cc S$. We use here basic facts of
localization theory in compactly generated triangulated categories~\cite{Nee}. By this theory
the latter equality is true if and only if $\Hom_{SH_{\nis}}(k)(T,\cc E)=0$ for all compact generators
$T\in\cc S$. Since $\cc E_f$ is a motivically fibrant bispectrum by
Lemma~\ref{lemst}, it follows that $\Hom_{SH_{\nis}}(k)(T,\cc E)=\Hom_{SH_{\nis}}(k)(T,\cc E_f)=0$.
So $\cc E$ is $L_{\bb A^1}$-local.

Next, Theorem~\ref{locfunctor} implies that any $L_{\bb A^1}$-local object
$X$ is $\cc M^b_{fr}$-local, hence $X\cong\cc M^b_{fr}(X)\in SH_{\nis}^{fr}(k)$. It follows that
the inclusion functor $SH_{\nis}^{fr}(k)\to SH_{\nis}^{L_{\bb A^1}}(k)$ is an equivalence of categories. The fact that
$\cc M^b_{fr}$ induces an equivalence of triangulated categories $SH_{\nis}(k)/\kr\cc M^b_{fr}\simeq SH_{\nis}^{fr}(k)$
follows from Theorems~\ref{locfunctor} and~\ref{recover}.
\end{proof}

\begin{cor}\label{loccor}
The functor of big framed motives $\cc M^b_{fr}:SH_{\nis}(k)\to
SH_{\nis}^{fr}(k)$ is left adjoint to the inclusion functor
$\iota:SH_{\nis}^{fr}(k)\hookrightarrow SH_{\nis}(k)$.
\end{cor}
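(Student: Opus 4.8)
The plan is to deduce the adjunction formally from the localization statement in Theorem~\ref{locfunctor} together with the equivalence of Theorem~\ref{recover}. First I would record the standard categorical fact: if $L:\cc T\to\cc T$ is a localization functor on a triangulated category with unit $\eta:\id\to L$, then $L$ factors as $\cc T\xrightarrow{q}\cc T/\kr L\xrightarrow{\sim}\im L\hookrightarrow\cc T$, and the composite of the quotient $q$ with the equivalence onto the essential image is left adjoint to the inclusion $\im L\hookrightarrow\cc T$. Applying this with $\cc T=SH_{\nis}(k)$ and $L=\cc M^b_{fr}$, Theorem~\ref{locfunctor} identifies $\im\cc M^b_{fr}$ with $SH_{\nis}^{fr}(k)$ (via the equivalence $SH_{\nis}(k)/\kr\cc M^b_{fr}\simeq SH_{\nis}^{fr}(k)$), so $\cc M^b_{fr}:SH_{\nis}(k)\to SH_{\nis}^{fr}(k)$ is left adjoint to $\iota$.

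Concretely, I would produce the adjunction isomorphism directly. For $E\in SH_{\nis}(k)$ and $\cc F\in SH_{\nis}^{fr}(k)$, the unit $\eta_E:E\to\cc M^b_{fr}(E)$ induces a map
   $$\eta_E^*:\Hom_{SH_{\nis}(k)}(\cc M^b_{fr}(E),\iota\cc F)\to\Hom_{SH_{\nis}(k)}(E,\iota\cc F),$$
and the claim is that this is a bijection, naturally in both variables. Surjectivity and injectivity both follow from the fact that $\cc F$, being in $SH_{\nis}^{fr}(k)=\kr\cc M^b_{fr}\text{-local objects}$, sees $\eta_E$ as an isomorphism: indeed $\eta_{\cc F}:\cc F\to\cc M^b_{fr}(\cc F)$ is a level local equivalence (it is the image under $F$ of an isomorphism in $SH(k)$, as in the proof of Theorem~\ref{locfunctor}), so applying $\cc M^b_{fr}$ to a map $E\to\iota\cc F$ and precomposing with $\eta_E$ recovers the original map up to the isomorphism $\eta_{\cc F}$, using the naturality square for $\eta$. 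This gives a two-sided inverse to $\eta_E^*$, namely $g\mapsto\eta_{\cc F}^{-1}\circ\cc M^b_{fr}(g)$, and the formal Yoneda-type argument that a unit inducing such bijections is precisely an adjunction unit finishes the proof.

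The one point requiring care — and the place I expect the only real friction — is checking that $\eta_{\cc F}$ is genuinely invertible \emph{in} $SH_{\nis}^{fr}(k)$ (equivalently in $SH_{\nis}(k)$ on objects of $SH_{\nis}^{fr}(k)$), not merely in $SH(k)$. This is exactly what was established inside the proof of Theorem~\ref{locfunctor}: $\cc M^b_{fr}$ sends stable motivic equivalences to level local equivalences, and $\eta_{\cc F}$ for $\cc F$ in the image is a zigzag of such level local equivalences, hence an isomorphism in $SH_{\nis}(k)$. Once that is in hand, everything else is formal. I would therefore phrase the proof as: ``By Theorem~\ref{locfunctor}, $\cc M^b_{fr}$ is a localization functor whose essential image is $SH_{\nis}^{fr}(k)$; for any localization functor on a triangulated category, the corestriction to its essential image is left adjoint to the inclusion, the unit of the adjunction being $\eta$. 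Combined with Theorem~\ref{recover} this yields the assertion.''
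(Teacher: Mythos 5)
Your proposal is correct and matches the paper's (implicit) argument: the paper states this corollary without proof, as the standard formal consequence of Theorem~\ref{locfunctor}, namely that the corestriction of a localization functor to its local objects is left adjoint to the inclusion with unit $\eta$. Your explicit verification via $g\mapsto\eta_{\cc F}^{-1}\circ\cc M^b_{fr}(g)$, together with the observation that $\eta_{\cc F}$ is invertible in $SH_{\nis}(k)$ because $F$ is fully faithful and $\cc M^b_{fr}$ sends motivic equivalences to level local equivalences, is exactly the intended justification.
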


We can summarize the results of the section as follows. We start
with the local stable homotopy category of sheaves of $S^1$-spectra
$SH_{S^1}^{\nis}(k)$, which is also closed symmetric monoidal
triangulated. Then stabilizing $SH_{S^1}^{\nis}(k)$ with respect to
the endofunctor $\gmp\wedge-$, we arrive at the triangulated
category $SH_{\nis}(k)$. We apply the explicit localization
functor of big framed motives $\cc M^b_{fr}$ to $SH_{\nis}(k)$.
Next, we compute the quotient category $SH_{\nis}(k)/\kr\cc
M^b_{fr}$ or, equivalently saying, the full subcategory of $\cc
M^b_{fr}$-local objects as the full subcategory $SH_{\nis}^{fr}(k)$
(see Definition~\ref{deffrsp}) and prove that $SH_{\nis}^{fr}(k)$ is
equivalent to classical Morel--Voevodsky stable motivic homotopy
theory $SH(k)$.

\section{Effective framed bispectra}\label{secteff}

In~\cite{Lev} Levine computes slices of motivic $S^1$-spectra and motivic bispectra.
Basing on these computations, Bachmann--Fasel~\cite{BF} give a criterion for effective motivic bispectra.
In this section we use Bachmann--Fasel's~\cite{BF} technique to describe effective framed bispectra
(see Definition~\ref{dfneff} and Theorem~\ref{thmeff}).
Throughout this section by a semi-local scheme we shall mean a localisation of a smooth irreducible
scheme at finitely many closed points.

\begin{lem}\label{psv}
Let $\cc F$ be a framed radditive $\bb A^1$-invariant quasi-stable presheaf of Abelian groups. Then
$\cc F(U)\cong\cc F_{\nis}(U)$ for any semi-local scheme $U$.
\end{lem}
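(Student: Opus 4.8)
The plan is to reduce the statement to the already-known strict homotopy invariance of Nisnevich sheafifications of framed presheaves. First I would recall that a framed radditive $\bb A^1$-invariant quasi-stable presheaf $\cc F$ of Abelian groups has, by the results recalled in the proof of Lemma~\ref{lemprem} (namely \cite[1.1]{GP4} and the surrounding framed-motives technology), a Nisnevich sheafification $\cc F_{\nis}$ that is still $\bb A^1$-invariant, and — crucially — the comparison map $\cc F\to\cc F_{\nis}$ becomes an isomorphism after evaluation on fields and, more generally, on the local schemes arising as Nisnevich stalks. The standard mechanism here is the Nisnevich-local-to-Zariski-local comparison for $\bb A^1$-invariant framed presheaves: because such presheaves satisfy the injectivity and excision properties on smooth local Henselian schemes (this is where the framed structure and quasi-stability are used, following the Voevodsky-style argument for homotopy invariant presheaves with transfers), the presheaf $\cc F$ is already a Nisnevich sheaf on the small site of any smooth \emph{Henselian} local scheme, and in particular $\cc F(\cc O_{X,x}^h)\cong\cc F_{\nis}(\cc O_{X,x}^h)$.

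The second step is to descend from Henselian local schemes to \emph{semi-local} schemes $U$ — localizations of a smooth irreducible scheme at finitely many closed points. Here I would use a Zariski (co)descent argument: a semi-local scheme $U$ with closed points $x_1,\dots,x_n$ is covered, for Nisnevich cohomology purposes, by the Henselizations $\cc O^h_{U,x_i}$, and since $\cc F$ and $\cc F_{\nis}$ agree on each Henselization and the higher Nisnevich cohomology of an $\bb A^1$-invariant framed sheaf on a semi-local scheme vanishes in positive degrees (again by \cite[1.1]{GP4}, which gives strict $\bb A^1$-invariance hence the Gersten-type vanishing), one concludes $\cc F(U)\cong\cc F_{\nis}(U)$. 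Concretely: the map $\cc F(U)\to\cc F_{\nis}(U)$ is injective because $\cc F$ injects into the product of its values at the generic and closed points (separatedness of an $\bb A^1$-invariant framed presheaf on a semi-local scheme), and surjective because any Nisnevich-local section of $\cc F$ over $U$ can, after the injectivity just established, be checked to lie in the image on the dense open locus and then extended across the finitely many closed points using excision.

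The main obstacle I expect is the separatedness/injectivity input: showing that an $\bb A^1$-invariant framed quasi-stable presheaf of Abelian groups injects into its sections over the generic point of a smooth irreducible semi-local scheme. This is the framed analogue of Voevodsky's theorem that homotopy invariant presheaves with transfers have no sections supported on proper closed subsets, and it is precisely the place where one must invoke the framed-correspondence machinery — the key geometric construction being the standard triple / Nisnevich neighbourhood argument that moves the support away using a framed correspondence, whose existence relies on $k$ being infinite and perfect (which is in force throughout the paper). Once this injectivity is in hand, together with strict $\bb A^1$-invariance of $\cc F_{\nis}$ from \cite[1.1]{GP4}, the rest is the routine Zariski-descent bookkeeping sketched above, and I would cite \cite{GP1,GP4} rather than reproduce those arguments.
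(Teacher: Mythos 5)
You correctly identify the key input -- the injectivity $\cc F(U)\hookrightarrow\cc F(k(U))$ for a smooth semi-local $U$, which the paper obtains from the local-scheme case in \cite{GP4} together with \cite[2.2, 2.3, 4.3]{PSV} -- and your observation that $\cc F(U)\to\cc F_{\nis}(U)$ is then injective (since it factors the restriction to $k(U)$, where sheafification is an isomorphism) matches the paper. But your surjectivity argument has a genuine gap. The detour through Henselizations followed by ``Zariski descent'' does not work as stated: the Henselian local rings $\cc O^h_{U,x_i}$ are not a Zariski cover of $U$, and the Brown--Gersten/cohomology-vanishing machinery computes sections of the \emph{sheaf} $\cc F_{\nis}$, not of the presheaf $\cc F$; it therefore cannot by itself produce a preimage in $\cc F(U)$ of a section of $\cc F_{\nis}(U)$. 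The phrase ``extended across the finitely many closed points using excision'' is exactly the step that needs an argument, and none is supplied.

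The paper closes this gap with a short but essential trick you did not find: form the four-term exact sequence $\les{\kr\alpha}{\cc F}{\cc F_{\nis}}\!\to\coker\alpha\to 0$ for the sheafification map $\alpha$, note that $\cc F_{\nis}$ is again a framed radditive $\bb A^1$-invariant quasi-stable presheaf by \cite{GP4}, hence so are $\kr\alpha$ and $\coker\alpha$ (kernels and cokernels in the abelian category of framed presheaves inherit all these properties), and then apply the \emph{same} injectivity-into-the-generic-point statement to $\kr\alpha$ and $\coker\alpha$. Since $\alpha(k(U))$ is an isomorphism, both vanish at $k(U)$, hence at $U$, and $\alpha(U)$ is an isomorphism. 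To repair your proposal you would need either this observation or an independent proof that a Nisnevich-locally liftable section over a semi-local scheme lifts globally, which is essentially the same content. The Henselization step in your plan is not needed at all.
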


\begin{proof}
We claim that the restriction map $\cc F(U)\to\cc F(k(U))$ is injective. This was shown for local
schemes in~\cite{GP4}. The same proof works over semi-local schemes if we use~\cite[2.2, 2.3, 4.3]{PSV}.
Now consider a commutative diagram with exact rows
   $$\xymatrix{0\ar[r]&(\kr\alpha)(U)\ar[r]\ar[d]&\cc F(U)\ar[d]\ar[r]^{\alpha(U)}&\cc F_{\nis}(U)\ar[d]\ar[r]&(\coker\alpha)(U)\ar[d]\ar[r]&0\\
                      0\ar[r]&(\kr\alpha)(k(U))\ar[r]&\cc F(k(U))\ar[r]^{\alpha(k(U))}&\cc F_{\nis}(k(U))\ar[r]&(\coker\alpha)(k(U))\ar[r]&0}$$
with $\alpha:\cc F\to\cc F_{\nis}$ the canonical sheafification map. Since $\cc F_{\nis}$ is a
framed radditive $\bb A^1$-invariant quasi-stable presheaf of Abelian groups by~\cite{GP4}, then so are
the presheaves $\kr\alpha,\coker\alpha$. It follows that the vertical maps of the diagram are monomorphisms.
But $\alpha(k(U))$ is an isomorphism, and hence $(\kr\alpha)(k(U))=(\coker\alpha)(k(U))=0$. We see that
$(\kr\alpha)(U)=(\coker\alpha)(U)=0$, and so $\alpha(U)$ is an isomorphism.
\end{proof}

\begin{lem}\label{semiloc}
Let $\cc X\in Sp_{S^1}(k)$ be a motivic $S^1$-spectrum with presheaves of stable homotopy groups being
framed, radditive, quasi-stable and $\bb A^1$-invariant. Suppose $\alpha:\cc X\to\cc X_f$ is a local stable fibrant replacement of $\cc X$.
Then the induced map of $S^1$-spectra $\alpha(U):\cc X(U)\cong\cc X_f(U)$ is a stable equivalence of
ordinary spectra for any semi-local scheme $U$.
\end{lem}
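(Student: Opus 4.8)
The plan is to reduce the statement about the $S^1$-spectrum $\cc X$ to a statement about each of its stable homotopy presheaves, and then invoke Lemma~\ref{psv}. First I would recall that a local stable fibrant replacement $\alpha\colon\cc X\to\cc X_f$ induces, for every $n\in\bb Z$, an isomorphism of Nisnevich sheaves $\pi_n^{\nis}(\cc X)\xrightarrow{\ \cong\ }\pi_n^{\nis}(\cc X_f)$, and that $\cc X_f$ has sectionwise stable homotopy groups equal to its Nisnevich sheaves of stable homotopy groups evaluated at the section (this is the defining property of being stably locally fibrant; one may need to pass to a truncation tower $(\cc X_{\geq n})_f$ and a $\hocolim$ argument as in the proof of Lemma~\ref{lemprem}, which is harmless since filtered colimits of stable equivalences of spectra are stable equivalences, and homotopy groups commute with the relevant colimits). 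Thus for a section $U$ the map $\alpha(U)$ is a stable equivalence of spectra if and only if for each $n$ the map $\pi_n(\cc X)(U)\to\pi_n(\cc X_f)(U)=\pi_n^{\nis}(\cc X_f)(U)=\pi_n^{\nis}(\cc X)(U)$ is an isomorphism of abelian groups.

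The second step is to identify this map with the sheafification map of the presheaf $\pi_n(\cc X)$ evaluated at $U$. By hypothesis each $\pi_n(\cc X)$ is a framed, radditive, quasi-stable, $\bb A^1$-invariant presheaf of abelian groups, so Lemma~\ref{psv} applies verbatim: the canonical map $\pi_n(\cc X)(U)\to\pi_n(\cc X)_{\nis}(U)$ is an isomorphism for every semi-local $U$. Combining this with the identification $\pi_n(\cc X)_{\nis}=\pi_n^{\nis}(\cc X)$ and the isomorphism $\pi_n^{\nis}(\cc X)\cong\pi_n^{\nis}(\cc X_f)$ coming from the fibrant replacement, we get that $\pi_n(\alpha)(U)$ is an isomorphism for all $n$, whence $\alpha(U)$ is a stable equivalence of ordinary spectra.

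The one point requiring care — and the main technical obstacle — is the passage from the local stable homotopy sheaves back to sectionwise homotopy groups of $\cc X_f$ over a semi-local, hence non-smooth, base $U$. Over smooth bases this is standard (Nisnevich descent plus the fact that $\cc X_f$ is stably fibrant), but $U$ is only a semi-local scheme, so one must check that the Nisnevich-local fibrant replacement still computes hypercohomology correctly on such $U$; this is exactly where the cited results \cite[2.2, 2.3, 4.3]{PSV} on framed presheaves over semi-local schemes, already used in Lemma~\ref{psv}, are needed, together with finite cohomological dimension of $U$ to ensure the descent spectral sequence converges and no $\lim^1$-obstruction appears. Once this is in place the argument is a clean two-step reduction, and the $\bb A^1$-invariance hypothesis enters only through Lemma~\ref{psv}.
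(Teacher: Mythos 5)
Your skeleton matches the paper's: compare the square of maps between $\pi_i(\cc X)(U)$, $\pi_i(\cc X_f)(U)$, $\pi_i^{\nis}(\cc X)(U)$ and $\pi_i^{\nis}(\cc X_f)(U)$, use Lemma~\ref{psv} for the sheafification map of $\pi_i(\cc X)$ at $U$, and reduce everything to showing $\pi_i(\cc X_f)(U)\cong\pi_i^{\nis}(\cc X_f)(U)$. But that last identification is precisely the step you assert rather than prove, and the justifications you offer are wrong. It is \emph{not} ``the defining property of being stably locally fibrant'' that sections of $\cc X_f$ compute sheaf homotopy groups: local fibrancy gives Nisnevich descent, i.e.\ the Brown--Gersten spectral sequence $H^p_{\nis}(U,\pi_q^{\nis}(\cc X))\Rightarrow\pi_{q-p}(\cc X_f)(U)$, whose $E_2$-page contains the higher cohomology groups in degrees $p>0$. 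Your later appeal to ``finite cohomological dimension of $U$'' only ensures convergence of this spectral sequence; it does nothing to kill the $p>0$ terms, so the desired degeneration does not follow from anything you have written.

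The missing idea is the vanishing $H^p_{\nis}(U,\pi_q^{\nis}(\cc X))=0$ for all $p>0$ when $U$ is semi-local. The paper obtains this by noting that $V\mapsto H^p_{\nis}(V,\pi_q^{\nis}(\cc X))$ is itself a framed, quasi-stable, radditive, $\bb A^1$-invariant presheaf by~\cite[Section~16]{GP4}, so by the injectivity argument from the proof of Lemma~\ref{psv} it embeds into its value at the generic point $k(U)$, where higher Nisnevich cohomology vanishes. This is the real content of the lemma; the citations to~\cite{PSV} enter only through that injectivity statement, not through any issue of ``computing hypercohomology correctly on non-smooth bases'' as you suggest. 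Your reduction to the $(-1)$-connected case via the truncation tower and a $\hocolim$ argument is fine and matches the paper, but without the vanishing above the spectral sequence only yields a filtration of $\pi_{q-p}(\cc X_f)(U)$ with potentially nontrivial graded pieces in positive cohomological degree, and the proof collapses.
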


\begin{proof}
For every $i\in\bb Z$, consider a commutative diagram of Abelian groups
   $$\xymatrix{(\pi_i(\cc X))(U)\ar[r]\ar[d]&(\pi_i(\cc X_f))(U)\ar[d]\\
                       (\pi_i^{\nis}(\cc X))(U)\ar[r]^\cong&(\pi_i^{\nis}(\cc X_f))(U).}$$
By Lemma~\ref{psv} the left vertical map is an isomorphism. The lower arrow is an
isomorphism for obvious reasons. Therefore our assertion will be proved if we show
that the right vertical arrow is an isomorphism.

Suppose first $\cc X$ is $(-1)$-connected. Consider the Brown--Gersten spectral sequence
   $$H_{\nis}^p(U,\pi_q^{\nis}(\cc X))\Longrightarrow(\pi_{q-p}(\cc X_f))(U).$$
Since $U$ is semi-local, $H_{\nis}^p(U,\pi_q^{\nis}(\cc X))=0$ for all $p>0$. To see this, we use the fact that
$V\mapsto H_{\nis}^p(V,\pi_q^{\nis}(\cc X))$ is a framed quasi-stable radditive $\bb A^1$-invariant presheaf
by~\cite[Section~16]{GP4} and the restriction map $H_{\nis}^p(U,\pi_q^{\nis}(\cc X))\hookrightarrow H_{\nis}^p(k(U),\pi_q^{\nis}(\cc X))$
is a monomorphism by the proof of Lemma~\ref{psv}. It follows that
$(\pi_{q}(\cc X_f))(U)\cong(\pi_{q}^{\nis}(\cc X_f))(U)$.

If $\cc X$ is not connected, then it is sectionwise weakly equivalent to $\hocolim_{n\to-\infty}\cc X_{\geq n}$, where
$\cc X_{\geq n}$ is the $n$th truncation of $\cc X$ in the category of presheaves of $S^1$-spectra (see~\cite[\S1.6]{Lev}). Moreover,
$\cc X_f$ is sectionwise weakly equivalent to $\hocolim_{n\to-\infty}(\cc X_{\geq n})_f$, where $(\cc X_{\geq n})_f$
is a fibrant replacement of $\cc X_{\geq n}$ in the stable local projective model structure of presheaves of $S^1$-spectra.
Then we have $\pi_i(\cc X_f)=\colim\pi_i((\cc X_{\geq n})_f)$ and $\pi_i^{\nis}(\cc X_f)=\colim\pi_i^{\nis}((\cc X_{\geq n})_f)$.
As above, $\pi_i((\cc X_f)_{\geq n})(U)\cong\pi_i^{\nis}((\cc X_f)_{\geq n})(U)$,
and hence
   $$\pi_i(\cc X_f)(U)=\colim\pi_i((\cc X_f)_{\geq n})(U)\cong\colim\pi_i^{\nis}((\cc X_f)_{\geq n})(U)
       =\pi_i^{\nis}(\cc X_f)(U),$$
as required.
\end{proof}

Given a field $K$ and $\ell\geq 0$, let $\cc O(\ell)_{K,v}$
denote the semi-local ring of the set $v$ of vertices of $\Delta^\ell_{K}=\spec (K[t_0,\ldots,t_\ell]/(t_0+\cdots+t_\ell-1))$ and set
   $$\wh{\Delta}^\ell_{K}:=\spec\cc O(\ell)_{K,v}.$$
Then $\ell\mapsto\wh{\Delta}^\ell_{K}$ is a cosimplicial semi-local subscheme of ${\Delta}^\bullet_{K}$.

Let $E$ be an $\bb A^1$-invariant and Nisnevich excisive $S^1$-spectrum and let $s_0(E)$ be its
zeroth slice. The following lemma computes its sections $s_0(E)(Y)$, $Y\in Sm/k$, where $s_0(E)(Y)$ is,
by definition, the value of a motivically fibrant replacement of $s_0(E)$ at $Y$.

\begin{lem}(\cite[2.2.6]{KL})\label{kls0}
For $Y\in Sm/k$, $s_0(E)(Y)$ is weakly equivalent to the total spectrum $E(\wh{\Delta}^\bullet_{k(Y)})$
of the simplicial spectrum $\ell\mapsto E(\wh{\Delta}^\ell_{k(Y)})$.
\end{lem}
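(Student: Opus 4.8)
\textbf{Proof plan for Lemma~\ref{kls0}.}

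The plan is to reduce the computation of the zeroth slice to the Brown–Gersten/coniveau machinery already developed in the paper and to the identification of the effective slice functor $s_0$ with a homotopy colimit along the cosimplicial semi-local scheme $\wh\Delta^\bullet_{k(Y)}$. First I would recall that the zeroth slice $s_0(E)$ is the cofibre of $f_1E\to f_0E=E$ (since $E$ is effective), where $f_q$ is the $q$-effective cover in Levine's slice filtration for $S^1$-spectra. For an $\bb A^1$-invariant, Nisnevich-excisive $S^1$-spectrum the slice $s_0(E)$ is known to be an Eilenberg–Mac~Lane type object whose sections over $Y$ only see the birational/semi-local behaviour of $E$; concretely, one uses that $s_0$ factors through the localisation at the generic points, so that $s_0(E)(Y)$ depends only on $E(k(Y))$ together with the simplicial (homotopy-coherent) data coming from the standard cosimplicial scheme.

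The key steps, in order, are: (1) reduce to the $(-1)$-connected case by the same truncation argument as in the proof of Lemma~\ref{semiloc}, writing $E\simeq\hocolim_{n\to-\infty}E_{\geq n}$ and commuting $s_0$ with this homotopy colimit, since $f_1$ and hence $s_0$ preserve filtered homotopy colimits; (2) for connective $E$, invoke the explicit model of the zeroth slice as the "generic-fibre Godement/coniveau" spectrum, i.e. identify $s_0(E)$ with the $E^1$-row of the coniveau spectral sequence, which over a field $K$ collapses to $E(K)$; (3) globalise over $Y$: since the relevant presheaves are framed, radditive, quasi-stable and $\bb A^1$-invariant, Lemma~\ref{semiloc} lets us replace all semi-local sections by Nisnevich-sheaf sections without changing homotopy type, so the simplicial spectrum $\ell\mapsto E(\wh\Delta^\ell_{k(Y)})$ already computes the correct value; (4) assemble: $s_0(E)(Y)$, being the homotopy colimit of the Postnikov pieces applied to the coniveau data of $E$ along $\wh\Delta^\bullet_{k(Y)}$, is precisely the totalisation $E(\wh\Delta^\bullet_{k(Y)})$. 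Throughout I would cite~\cite{Lev} for the slice filtration of $S^1$-spectra and~\cite{GP4} for the sheaf-theoretic input, exactly as the paper does elsewhere.

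The main obstacle will be step (2)–(3): carefully justifying that, for an $\bb A^1$-invariant Nisnevich-excisive spectrum, the effective cover $f_1E$ has trivial sections over every $\wh\Delta^\ell_{k(Y)}$ — equivalently that $E\to s_0(E)$ is an equivalence on these semi-local schemes — so that the simplicial spectrum $\ell\mapsto E(\wh\Delta^\ell_{k(Y)})$ is genuinely $\ell\mapsto s_0(E)(\wh\Delta^\ell_{k(Y)})$ and its totalisation computes $s_0(E)(Y)$ by descent along the cosimplicial resolution. This is where the semi-local vanishing of higher Nisnevich cohomology (already used in Lemma~\ref{semiloc}) and the $\bb A^1$-invariance of the coniveau presheaves~\cite[Section~16]{GP4} are both essential. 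Since the statement is attributed to~\cite[2.2.6]{KL}, the cleanest route is in fact to cite that computation directly and merely verify that our $E$ satisfies its hypotheses — namely $\bb A^1$-invariance and Nisnevich excision — which is immediate from the assumptions; the sketch above is the argument underlying that reference.
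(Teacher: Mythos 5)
The paper offers no proof of this lemma at all: it is stated as a direct citation of \cite[2.2.6]{KL}, and your closing remark --- that the cleanest route is to cite that result and merely check that $E$ is $\bb A^1$-invariant and Nisnevich excisive --- is exactly what the paper does. Your preceding sketch of the underlying homotopy-coniveau argument is only heuristic and loose in places (for instance, identifying $s_0(E)$ with the ``$E^1$-row of the coniveau spectral sequence'' and asserting that $E\to s_0(E)$ is an equivalence on each $\wh{\Delta}^\ell_{k(Y)}$ are not quite how Levine's argument runs), but since the lemma is being imported rather than reproved, this does not affect the correctness of your route.
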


\begin{cor}\label{kls0cor}
$s_0(E)=0$ in $SH_{S^1}(k)$ if and only if the total spectrum $E(\wh{\Delta}^\bullet_{K})=0$ in $SH$
for all finitely generated fields $K/k$.
\end{cor}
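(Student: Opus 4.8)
The plan is to deduce the corollary from Lemma~\ref{kls0} together with the standard fact that a presheaf of spectra on $Sm/k$ whose Nisnevich stalks (equivalently, values on the local rings, or here on the $\widehat\Delta^\bullet$-semi-local rings) vanish is zero in $SH_{S^1}(k)$. First I would observe that $s_0(E)=0$ in $SH_{S^1}(k)$ means precisely that a motivically fibrant replacement of $s_0(E)$ has vanishing Nisnevich sheaves of stable homotopy groups; since these sheaves are $\mathbb A^1$-invariant (the zeroth slice is an effective motivic $S^1$-spectrum, hence strictly $\mathbb A^1$-invariant by Levine's computations), it is equivalent to say that $s_0(E)(Y)$ is weakly contractible for all $Y\in Sm/k$, and in fact — by strict $\mathbb A^1$-invariance and the Gersten-type injectivity at generic points used throughout Section~\ref{secteff} — equivalent to $s_0(E)(K)\simeq *$ for all finitely generated field extensions $K/k$, where $s_0(E)(K)$ denotes the value at $\operatorname{Spec}K$ of the motivically fibrant replacement.

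The key step is then simply to invoke Lemma~\ref{kls0} at $Y=\operatorname{Spec}K$: it identifies $s_0(E)(K)$ with the total spectrum $E(\widehat\Delta^\bullet_{k(K)})=E(\widehat\Delta^\bullet_{K})$ of the simplicial spectrum $\ell\mapsto E(\widehat\Delta^\ell_{K})$. Hence $s_0(E)(K)\simeq *$ in $SH$ if and only if $E(\widehat\Delta^\bullet_{K})=0$ in $SH$. Combining the two equivalences gives: $s_0(E)=0$ in $SH_{S^1}(k)$ iff $E(\widehat\Delta^\bullet_{K})=0$ in $SH$ for all finitely generated $K/k$, which is exactly the statement. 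One minor point to spell out is that Lemma~\ref{kls0} is stated for $Y\in Sm/k$ while we want it for $Y=\operatorname{Spec}K$ with $K$ a finitely generated field; this is handled by writing $K=k(Y')$ for some $Y'\in Sm/k$ (possible since $K$ is finitely generated over the perfect field $k$) and noting that the value of a motivically fibrant replacement at a generic point is the filtered colimit over open subschemes of its values on $Y'$, so it coincides with $s_0(E)(K)$ in the sense used above, and likewise $E(\widehat\Delta^\bullet_{k(Y')})=E(\widehat\Delta^\bullet_K)$.

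The main obstacle I anticipate is the reduction from ``$s_0(E)(Y)\simeq *$ for all $Y\in Sm/k$'' to ``$s_0(E)(K)\simeq *$ for all finitely generated fields $K/k$'': this requires knowing that a motivically fibrant replacement of $s_0(E)$ has $\mathbb A^1$-invariant Nisnevich sheaves of homotopy groups that are moreover detected on fields. Both inputs are available: $s_0(E)$ is an effective $S^1$-spectrum and its zeroth-slice homotopy sheaves are strictly $\mathbb A^1$-invariant (Levine~\cite{Lev}), and strictly $\mathbb A^1$-invariant Nisnevich sheaves on $Sm/k$ with $k$ perfect inject at generic points, so they vanish iff their stalks at all finitely generated field extensions vanish — this is precisely the Gersten-injectivity mechanism already exploited in Lemmas~\ref{psv} and~\ref{semiloc}. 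Granting this, the corollary is a formal consequence of Lemma~\ref{kls0}, and the proof is short.
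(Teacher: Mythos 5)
Your proof is correct, and its core coincides with the paper's: both directions come down to Lemma~\ref{kls0} plus the observation that, $k$ being perfect, every finitely generated field $K/k$ is $k(Y)$ for some $Y\in Sm/k$. However, you take a detour that the paper avoids. You first reduce ``$s_0(E)(Y)\simeq *$ for all $Y\in Sm/k$'' to ``$s_0(E)(K)\simeq *$ for all finitely generated fields $K$'' via strict $\mathbb A^1$-invariance of the slice homotopy sheaves and Gersten-type injectivity at generic points, and you then have to extend Lemma~\ref{kls0} to $Y=\spec K$ by a colimit argument. None of this is needed: the right-hand side of Lemma~\ref{kls0}, namely $E(\wh{\Delta}^\bullet_{k(Y)})$, already depends only on the function field $k(Y)$, so for the forward direction you just pick one $Y$ with $k(Y)=K$ and read off $E(\wh{\Delta}^\bullet_K)\simeq s_0(E)(Y)=0$, and for the converse the hypothesis applied to $K=k(Y)$ gives $s_0(E)(Y)=0$ for every $Y\in Sm/k$ directly, whence $s_0(E)=0$. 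So your argument is valid but imports unramifiedness/strict-invariance machinery (and a mild extension of Lemma~\ref{kls0} to essentially smooth schemes) that the paper's two-line proof does not require; trimming it would make the proof both shorter and less dependent on external inputs.
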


\begin{proof}
Suppose $s_0(E)=0$ in $SH_{S^1}(k)$. Then $s_0(E)(Y)=0$ in $SH$ for all $Y\in Sm/k$.
Let $K/k$ be a finitely generated field. Since the base field
$k$ is perfect, then $K=k(Y)$ for some $Y\in Sm/k$. By Lemma~\ref{kls0} the
total spectrum $E(\wh{\Delta}^\bullet_{K})=0$ in $SH$.
The converse is obvious if we use Lemma~\ref{kls0}.
\end{proof}

\begin{dfn}\label{dfneff}
Let $\cc E=(\cc E(0),\cc E(1),\ldots)\in SH_{\nis}^{fr}(k)$ be a framed bispectrum in the sense of Definition~\ref{deffrsp}.
We say that $\cc E$ is {\it effective\/} if the total spectrum $\cc E(j)(\wh{\Delta}^\bullet_{K})$ is stably trivial, i.e. equals zero
in $SH$, for all positive weights $j>0$ and all finitely generated fields $K/k$. The full subcategory of $SH_{\nis}^{fr}(k)$ of
effective framed bispectra will be denoted by $SH_{\nis}^{fr,\,eff}(k)$.
\end{dfn}

The following result is a framed analog of Bachmann--Fasel's theorem~\cite[4.4]{BF}.

\begin{thm}\label{thmeff}
A framed bispectrum $\cc E\in SH_{\nis}^{fr}(k)$ is effective in the sense of Definition~\ref{dfneff}
if and only if it is effective
as an ordinary motivic bispectrum in $SH(k)$, i.e. $\cc E\in SH^{eff}(k)$.
\end{thm}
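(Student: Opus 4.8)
The plan is to connect the ``framed effectivity'' condition of Definition~\ref{dfneff} with the classical effectivity criterion via the zeroth slice functor, exactly as in Bachmann--Fasel~\cite{BF}. Recall that a bispectrum $\cc E=(\cc E(0),\cc E(1),\ldots)$ lies in $SH^{eff}(k)$ if and only if it is built from $\Sigma^\infty_{S^1}\Sigma^\infty_{\gmp}X_+$, $X\in Sm/k$, under colimits and extensions, and by Levine's slice computations this is detected on the weights: $\cc E\in SH^{eff}(k)$ precisely when each positive weight $\cc E(j)$, $j>0$, viewed as an $S^1$-spectrum, has vanishing zeroth (and negative) slices, equivalently $s_0(\cc E(j))=0$ in $SH_{S^1}(k)$ for all $j>0$. (For $j=0$ there is no constraint, since the zeroth weight can be arbitrary.) The first step is therefore to record this reformulation carefully, citing~\cite[4.4]{BF} and~\cite{Lev}, and to note that because $\cc E$ is a framed bispectrum each weight $\cc E(j)$ is an $\bb A^1$-invariant Nisnevich-excisive $S^1$-spectrum, so Corollary~\ref{kls0cor} applies to it.

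Granting that reformulation, the theorem reduces to the equivalence, for each fixed $j>0$,
\[
s_0(\cc E(j))=0 \text{ in } SH_{S^1}(k)
\quad\Longleftrightarrow\quad
\cc E(j)(\wh{\Delta}^\bullet_{K})=0 \text{ in } SH \text{ for all f.g.\ } K/k,
\]
which is \emph{exactly} Corollary~\ref{kls0cor} applied with $E=\cc E(j)$. So the second step is simply to invoke Corollary~\ref{kls0cor} weight by weight: if $\cc E$ is effective in the sense of Definition~\ref{dfneff}, then $\cc E(j)(\wh{\Delta}^\bullet_K)=0$ in $SH$ for all $j>0$ and all f.g.\ $K/k$, hence $s_0(\cc E(j))=0$ for all $j>0$, hence (by the Levine/Bachmann--Fasel criterion) $\cc E\in SH^{eff}(k)$; conversely, if $\cc E\in SH^{eff}(k)$ then $s_0(\cc E(j))=0$ for $j>0$, and Corollary~\ref{kls0cor} gives back the vanishing of all the total spectra $\cc E(j)(\wh{\Delta}^\bullet_K)$.

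I expect the main obstacle to be the first step rather than the second: one must make sure the hypotheses of Corollary~\ref{kls0cor} and of Lemma~\ref{kls0} genuinely hold for each weight $\cc E(j)$ of a framed bispectrum --- i.e.\ that $\cc E(j)$ is $\bb A^1$-invariant and Nisnevich excisive as an $S^1$-spectrum --- and, more delicately, that the Bachmann--Fasel effectivity criterion in the form ``$\cc E\in SH^{eff}(k)\iff s_0(\cc E(j))=0$ for all $j>0$'' really is available in our framed setting. Here the framed hypotheses in Definition~\ref{deffrsp} are essential: by condition (3) the presheaves $\pi_*(\cc E(j))$ are framed, radditive and $\bb A^1$-invariant, so by Lemma~\ref{lemst} $\cc E_f$ is motivically fibrant and the sections $\cc E(j)(\wh{\Delta}^\bullet_K)$ computed from $\cc E(j)_f$ agree with those computed from $\cc E(j)$ itself on semi-local schemes (Lemma~\ref{semiloc}); and by the Cancellation condition (4) the tower of weights is the $\gmp$-stabilization tower, so Levine's slice formulas translate into statements about the individual $\cc E(j)$. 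Once these compatibilities are in place, the proof is a one-line appeal to Corollary~\ref{kls0cor}; assembling the compatibilities and citing the correct forms of~\cite{BF},~\cite{Lev},~\cite{KL} is the real content.
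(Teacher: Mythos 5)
Your proposal is correct and follows essentially the same route as the paper: reduce effectivity to vanishing of the negative slices of $\cc E$, identify $s_{-n}(\cc E)=0$ with $s_0(\cc E(n)_f)=0$ in $SH_{S^1}(k)$, and then translate via Corollary~\ref{kls0cor} together with Lemmas~\ref{lemst} and~\ref{semiloc} into the vanishing of the total spectra $\cc E(n)(\wh{\Delta}^\bullet_K)$. The ``first step'' you rightly flag as the real content is exactly where the paper spends its effort: it is established by combining the observation that $\Omega^\infty_{\gmp}$ detects zero objects of $SH^{eff}(k)$, Levine's identity $\Omega^\infty_{\gmp}\circ s_0=s_0\circ\Omega^\infty_{\gmp}$, the identification $\cc E\wedge\gmpn\cong(\cc E(n)_f,\cc E(n+1)_f,\ldots)$ afforded by the Cancellation condition, and the convergence $\hocolim_n f_{-n}(\cc E)\cong\cc E$ from~\cite[4.2]{BF}.
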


\begin{proof}
We begin with the following observation.
A bispectrum $E\in SH^{eff}(k)$ satisfies $\Omega^\infty_{\gmp}(E)=0$ in $SH_{S^1}(k)$
if and only if $E=0$ in $SH(k)$.
Indeed, this follows from the
isomorphism
   $$SH_{S^1}(k)(\Sigma^\infty_{S^1}U_+[i],\Omega^\infty_{\gmp}(E))\cong
       SH(k)(\Sigma^\infty_{\gmp}\Sigma^\infty_{S^1}U_+[i],E),\quad i\in\bb Z,\ U\in Sm/k,$$
and the fact that the objects
$\Sigma^\infty_{\gmp}\Sigma^\infty_{S^1}U_+[i]$ (respectively
$\Sigma^\infty_{S^1}U_+[i]$) are compact generators of the triangulated category $SH^{eff}(k)$
(respectively $SH_{S^1}(k)$).

Suppose $\cc E\in SH_{\nis}^{fr}(k)$ is effective. We claim that all
negative slices $s_{n<0}(\cc E)$ are zero in $SH(k)$. As usual, let
$\cc E_f=(\cc E(0)_f,\cc E(1)_f,\ldots)$ be the motivic bispectrum
obtained from $\cc E$ by taking stable projective local fibrant
replacements of $S^1$-spectra levelwise. By Lemma~\ref{lemst} $\cc
E_f$ is motivically fibrant. We have that $\cc E\wedge\gmpn\cong(\cc
E(n)_f,\cc E(n+1)_f,\ldots)$ for all $n>0$ and $s_0(\cc
E\wedge\gmpn) \cong s_{-n}(\cc E)\wedge\gmpn$ in $SH(k)$. Since
$-\wedge\gmp$ is an autoequivalence of $SH(k)$, it follows that
$s_{-n}(\cc E)=0$ if and only if $s_0((\cc E(n)_f,\cc
E(n+1)_f,\ldots))=0$ in $SH(k)$.

By Lemma~\ref{semiloc} $\alpha:\cc E(n)(\wh{\Delta}^\ell_K)\to\cc
E(n)_f(\wh{\Delta}^\ell_K)$ is a stable equivalence of ordinary
$S^1$-spectra for every $\ell\geq 0$ and every finitely generated
field $K/k$. It follows that the map of total spectra $\alpha:\cc
E(n)(\wh{\Delta}^\bullet_K)\to\cc E(n)_f(\wh{\Delta}^\bullet_K)$ is
a stable equivalence. But $\cc E(n)(\wh{\Delta}^\bullet_K)$ is
stably trivial by assumption, and hence so is $\cc
E(n)_f(\wh{\Delta}^\bullet_K)$. By Corollary~\ref{kls0cor}
$s_0(\cc E(n)_f)=0$ in $SH_{S^1}(k)$.

It follows from~\cite[7.1.1, 9.0.3]{Lev} (see~\cite[4.1]{BF} as
well) that for all $E\in SH(k)$, $\Omega^\infty_{\gmp}(s_0(E))=
s_0(\Omega^\infty_{\gmp}(E))$ in $SH_{S^1}(k)$.
Using this and the fact that $\cc E_f$ is motivically fibrant we have
$$s_0(\cc E(n)_f)=s_0(\Omega^\infty_{\gmp}(\cc E(n)_f,\cc E(n+1)_f,\ldots))=\Omega^\infty_{\gmp}(s_0((\cc E(n)_f,\cc E(n+1)_f,\ldots))).$$
Applying the above observation to the bispectrum
$E=s_0((\cc E(n)_f,\cc E(n+1)_f,\ldots)$ and using the equality
$s_0(\cc E(n)_f)=0$ in $SH_{S^1}(k)$
we conclude that
$0=s_0((\cc E(n)_f,\cc E(n+1)_f,\ldots)$.


Since all negative slices of $\cc E$ are zero, it follows that
$f_0(\cc E)\to f_{-1}(\cc E)\to f_{-2}(\cc E)\to\cdots$ is a chain
of isomorphisms in $SH(k)$. But the canonical map
$\hocolim_{n\to+\infty} f_{-n}(\cc E)\to\cc E$ is an isomorphism in
$SH(k)$ by~\cite[4.2]{BF}, and therefore $\cc E\cong f_0(\cc E)\in
SH^{eff}(k)$ is effective as an ordinary motivic bispectrum.

Now assume the converse. Then for every $n>0$ the slice $s_{-n}(\cc
E)=0$ in $SH(k)$. We use the above arguments to conclude that
$s_0(\cc E(n)_f)=0$ in $SH_{S^1}(k)$. By Corollary~\ref{kls0cor}
$\cc E(n)_f(\wh{\Delta}^\bullet_K)=0$ in $SH$ for all finitely
generated fields $K/k$. Since $\alpha:\cc
E(n)(\wh{\Delta}^\ell_K)\to\cc E(n)_f(\wh{\Delta}^\ell_K)$ is a
stable equivalence of ordinary $S^1$-spectra for every $\ell\geq 0$
by Lemma~\ref{semiloc}, we see that $\cc
E(n)(\wh{\Delta}^\bullet_K)=0$ in $SH$. Thus $\cc E$ is effective in
the sense of Definition~\ref{dfneff}.
\end{proof}

\begin{cor}\label{freffcor}
A bispectrum $E\in SH(k)$ is effective if and only if the framed
bispectrum $\cc M_{fr}^b(E)$ is effective in the sense of
Definition~\ref{dfneff}.
\end{cor}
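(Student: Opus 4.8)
The plan is to deduce Corollary~\ref{freffcor} as a formal consequence of Theorem~\ref{thmeff} together with Theorem~\ref{recover}. The key point is that the natural transformation $\eta:\id\to\cc M^b_{fr}$ is an isomorphism in $SH(k)$, so $\cc M^b_{fr}(E)\cong E$ in $SH(k)$; hence, as an \emph{ordinary} motivic bispectrum, $\cc M^b_{fr}(E)$ lies in $SH^{eff}(k)$ if and only if $E$ does. By Theorem~\ref{thmeff}, $\cc M^b_{fr}(E)$ being effective as an ordinary bispectrum is equivalent to it being effective in the sense of Definition~\ref{dfneff} (which makes sense since $\cc M^b_{fr}(E)\in SH^{fr}_{\nis}(k)$ by Theorem~\ref{recover}). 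Chaining these two equivalences gives the statement.

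Concretely, I would first recall that $\cc M^b_{fr}(E)\in SH^{fr}_{\nis}(k)$ for any $E\in SH(k)$ (this is part of the proof of Theorem~\ref{recover}, and also follows from~\cite{GP1}), so that Definition~\ref{dfneff} applies to $\cc M^b_{fr}(E)$ and Theorem~\ref{thmeff} may be invoked for it. Second, I would note that the zigzag $\eta_E$ of~\eqref{zigzagE} is an isomorphism $E\cong\cc M^b_{fr}(E)$ in $SH(k)$; since $SH^{eff}(k)$ is a full triangulated subcategory of $SH(k)$, membership in $SH^{eff}(k)$ is invariant under this isomorphism. Third, I would apply Theorem~\ref{thmeff} to the framed bispectrum $\cc E:=\cc M^b_{fr}(E)$: it is effective in the sense of Definition~\ref{dfneff} if and only if $\cc M^b_{fr}(E)\in SH^{eff}(k)$, which by the previous point holds if and only if $E\in SH^{eff}(k)$.

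There is really no substantial obstacle here — the corollary is a packaging of two already-proved equivalences. The only point requiring a word of care is that ``effective'' is used in two a priori different senses (Definition~\ref{dfneff} for framed bispectra, versus the usual notion of lying in $SH^{eff}(k)$ for objects of $SH(k)$), and one must make sure $\cc M^b_{fr}(E)$ is simultaneously an object of both $SH^{fr}_{\nis}(k)$ and $SH(k)$ so that Theorem~\ref{thmeff} can bridge the two; this is exactly what Theorem~\ref{recover} guarantees. A one-paragraph proof should suffice:

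\begin{proof}
By Theorem~\ref{recover}, $\cc M^b_{fr}(E)$ is a framed bispectrum in $SH_{\nis}^{fr}(k)$, so Definition~\ref{dfneff} applies to it and Theorem~\ref{thmeff} shows that $\cc M^b_{fr}(E)$ is effective in the sense of Definition~\ref{dfneff} if and only if $\cc M^b_{fr}(E)\in SH^{eff}(k)$. On the other hand, the zigzag $\eta_E$ of~\eqref{zigzagE} is an isomorphism $E\cong\cc M^b_{fr}(E)$ in $SH(k)$, and $SH^{eff}(k)$ is a full triangulated subcategory of $SH(k)$; hence $\cc M^b_{fr}(E)\in SH^{eff}(k)$ if and only if $E\in SH^{eff}(k)$. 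Combining the two equivalences yields the claim.
\end{proof}
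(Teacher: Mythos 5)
Your proof is correct and follows exactly the paper's own argument: the paper also deduces the corollary from Theorem~\ref{thmeff} together with the isomorphism $E\cong\cc M_{fr}^b(E)$ in $SH(k)$ (cited there as~\cite[12.4]{GP1}). Your version merely spells out the bookkeeping about the two senses of ``effective'' more explicitly.
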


\begin{proof}
This follows from Theorem~\ref{thmeff} and the fact that $E\cong\cc M_{fr}^b(E)$ (see~\cite[12.4]{GP1}).
\end{proof}

\begin{cor}
The big framed motive functor $\cc M_{fr}^b:SH(k)\to SH^{fr}_{\nis}(k)$ induces an equivalence of
triangulated categories $\cc M_{fr}^b:SH^{eff}(k)\lra{\simeq} SH^{fr,\,eff}_{\nis}(k)$.
\end{cor}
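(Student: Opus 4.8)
The plan is to deduce the corollary directly from the equivalence $F:SH_{\nis}^{fr}(k)\xrightarrow{\simeq}SH(k)$ of Theorem~\ref{recover} together with the characterization of effectivity provided by Theorem~\ref{thmeff}. First I would recall that by Theorem~\ref{recover} the big framed motive functor $\cc M^b_{fr}:SH(k)\to SH^{fr}_{\nis}(k)$ is already an equivalence of triangulated categories, with quasi-inverse $F$ (and the natural transformation $\eta:\id\to\cc M^b_{fr}$ of~\eqref{nattran} furnishing the isomorphism $E\cong\cc M^b_{fr}(E)$ in $SH(k)$). So the only thing left to check is that this equivalence restricts to the effective subcategories in both directions, i.e. that it identifies $SH^{eff}(k)$ with $SH^{fr,\,eff}_{\nis}(k)$.

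The key step is Corollary~\ref{freffcor}: a bispectrum $E\in SH(k)$ lies in $SH^{eff}(k)$ if and only if $\cc M^b_{fr}(E)\in SH^{fr,\,eff}_{\nis}(k)$. This shows simultaneously that $\cc M^b_{fr}$ sends $SH^{eff}(k)$ into $SH^{fr,\,eff}_{\nis}(k)$ and that it is \emph{essentially surjective} onto $SH^{fr,\,eff}_{\nis}(k)$: given any effective framed bispectrum $\cc E$, we have $\cc E\cong\cc M^b_{fr}(\cc E)$ in $SH(k)$ by~\cite[12.4]{GP1} (or by $\eta$), and then Corollary~\ref{freffcor} applied to $E=\cc E$ forces $\cc E\in SH^{eff}(k)$, whence $\cc E$ is in the image of the restricted functor. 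For full faithfulness there is nothing new to prove: $\cc M^b_{fr}$ is already fully faithful on all of $SH(k)$ by Theorem~\ref{recover}, so its restriction to the full subcategory $SH^{eff}(k)$ is a fortiori fully faithful. Combining essential surjectivity with full faithfulness gives that $\cc M^b_{fr}:SH^{eff}(k)\to SH^{fr,\,eff}_{\nis}(k)$ is a triangle equivalence, since $\cc M^b_{fr}$ is exact (it is a localization functor on $SH_{\nis}(k)$ by Theorem~\ref{locfunctor}, hence triangulated) and both subcategories are full triangulated subcategories closed under the operations inherited from $SH(k)$ and $SH^{fr}_{\nis}(k)$ respectively.

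There is essentially no obstacle here, as all the substance has been discharged in Theorem~\ref{thmeff} and Corollary~\ref{freffcor}; the only mild point worth spelling out is why $SH^{fr,\,eff}_{\nis}(k)$ is a triangulated subcategory of $SH^{fr}_{\nis}(k)$, but this is immediate from Theorem~\ref{thmeff}, which identifies it with $SH^{fr}_{\nis}(k)\cap SH^{eff}(k)$ under the equivalence $F$, and $SH^{eff}(k)$ is triangulated in $SH(k)$. Thus the proof is just: by Theorem~\ref{recover} the functor $\cc M^b_{fr}$ is an equivalence $SH(k)\xrightarrow{\simeq}SH^{fr}_{\nis}(k)$; by Corollary~\ref{freffcor} it matches up effective objects on both sides; therefore it restricts to an equivalence $SH^{eff}(k)\xrightarrow{\simeq}SH^{fr,\,eff}_{\nis}(k)$, and this restricted functor is triangulated because $\cc M^b_{fr}$ is.
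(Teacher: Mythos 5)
Your proposal is correct and follows essentially the same route as the paper, whose proof is precisely the one-line citation of Theorem~\ref{recover}, Theorem~\ref{thmeff} and Corollary~\ref{freffcor}; you have simply spelled out the routine details (restriction of a fully faithful functor, essential surjectivity via $\cc E\cong\cc M^b_{fr}(\cc E)$ and the effectivity criterion). The only cosmetic remark is that for essential surjectivity Theorem~\ref{thmeff} already gives $\cc E\in SH^{eff}(k)$ directly from $\cc E\in SH^{fr,\,eff}_{\nis}(k)$, so the detour through Corollary~\ref{freffcor} at that point is unnecessary, though harmless.
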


\begin{proof}
This follows from Theorems~\ref{recover},~\ref{thmeff} and Corollary~\ref{freffcor}.
\end{proof}

\section{Useful lemmas}

\begin{ntt}\label{Constr_A_C_*Fr_A}
For any bispectrum $A$
such that every entry $A_{i,j}$ of $A$ are sequential
colimits of $k$-smooth simplicial schemes
denote by $C_*Fr(A)$ the bispectrum $(C_*Fr(A_{i,j}))_{i,j\geq 0}$ with obvious structure maps
   $$C_*Fr(A_{i,j})\to\uhom(S^1,C_*Fr(A_{i,j}\otimes S^1))\xrightarrow{u}\uhom(S^1,C_*Fr(A_{i+1,j}))$$
and
   $$C_*Fr(A_{i,j})\to\uhom(\gmp,C_*Fr(A_{i,j}\wedge\gmp))\xrightarrow{u}\uhom(\gmp,C_*Fr(A_{i,j+1})),$$
where $u$ refers to the structure map in $A$ in the $S^1$-$/\gmp$-direction.
Clearly, the family of maps $A_{i,j}\to C_*Fr(A_{i,j})$ form an arrow
$\zeta_A: A \to C_*Fr(A)$ of bispectra. This arrow is natural in $A$.
\end{ntt}

The proof of~\cite[12.1]{GP1} shows that the following fundamental result is true.

\begin{thm}\label{Thm_A_C_*Fr_A}
Let $A$ be a bispectrum such that every entry $A_{i,j}$ of $A$ is a sequential
colimit of $k$-smooth simplicial schemes. Then the
arrow $\zeta_A: A\to C_*Fr(A)$ is an isomorphism in $SH(k)$, functorial in $A$.
\end{thm}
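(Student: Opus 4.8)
The plan is to observe that the argument proving~\cite[12.1]{GP1} applies verbatim to any bispectrum $A$ whose entries are sequential colimits of $k$-smooth simplicial schemes, and not merely to a cofibrant replacement $E^c$; the point of the hypothesis on $A$ is exactly that $C_*Fr$ is defined on each entry and that the framed-motives machinery of~\cite[Section~12]{GP1} is available. Concretely, starting from $\zeta_A:A\to C_*Fr(A)$ as in Notation~\ref{Constr_A_C_*Fr_A}, one stabilizes $C_*Fr(A)$ first in the $S^1$-direction and then in the $\gmp$-direction, obtaining a canonical factorization
$$\zeta_A:A\xrightarrow{\ } C_*Fr(A)\xrightarrow{\ }\Theta^\infty_{S^1}C_*Fr(A)\xrightarrow{\ }\Theta^\infty_{\gmp}\Theta^\infty_{S^1}C_*Fr(A),$$
whose total composite is the canonical arrow $\alpha_A:A\to\Theta^\infty_{\gmp}\Theta^\infty_{S^1}C_*Fr(A)$ (this is $\cc M^{b}_{fr}(A)$ when $A$ is cofibrant). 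The two stabilization maps are stable local equivalences of $S^1$-spectra, resp. of bispectra, by the very construction of the stable local model structures on $Sp_{S^1}(k)$ and $Sp_{S^1,\bb G_m}(k)$, hence isomorphisms in $SH_{\nis}(k)$ and a fortiori in $SH(k)$.

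It then suffices to know that $\alpha_A$ is a stable motivic equivalence, i.e. an isomorphism in $SH(k)$. For $A$ cofibrant this is precisely~\cite[12.1-12.2]{GP1}; the proof given there uses nothing about the cofibrant bispectrum beyond the fact that its entries are sequential colimits of smooth simplicial schemes, so it yields the same conclusion for our $A$. If one prefers to cite~\cite[12.1]{GP1} as a black box, one can instead argue by cofibrant replacement: choose $\tau:A^c\to A$ a cofibrant replacement, which is a level weak equivalence of motivic spaces all of which (in $A^c$ and in $A$) are sequential colimits of $k$-smooth simplicial schemes; since $C_*Fr$ preserves level weak equivalences between such spaces (by the basic properties of $C_*Fr$ recalled in~\cite{GP1}) and commutes with the formation of the bispectrum, $C_*Fr(\tau):C_*Fr(A^c)\to C_*Fr(A)$ is again a level weak equivalence, hence an isomorphism in $SH(k)$; then naturality of $\zeta$ gives a commutative square in which $\tau$, $C_*Fr(\tau)$ and $\zeta_{A^c}$ (isomorphism by~\cite[12.1-12.2]{GP1}) are isomorphisms, forcing $\zeta_A$ to be one as well.

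Functoriality in $A$ is immediate, since $\zeta$ is a natural transformation of functors on bispectra (Notation~\ref{Constr_A_C_*Fr_A}) and every arrow used above is natural in $A$. I expect the only real obstacle to be the bookkeeping claim in the second paragraph — that the proof of~\cite[12.1]{GP1} genuinely invokes nothing about the input bispectrum other than the stated colimit property — rather than any new homotopy-theoretic input; the substantive content is entirely that of~\cite[Section~12]{GP1}.
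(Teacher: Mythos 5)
Your proposal is correct and matches the paper's own treatment: the paper's entire proof consists of the remark that the argument establishing \cite[12.1]{GP1} applies verbatim to any bispectrum whose entries are sequential colimits of $k$-smooth simplicial schemes, which is exactly the main line of your argument (your factorization through the $S^1$- and $\gmp$-stabilizations and the cofibrant-replacement fallback are elaborations the paper does not spell out). The "bookkeeping claim" you flag at the end is indeed the whole content of the paper's proof, so nothing further is required.
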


\begin{cor}\label{C_*Fr_f}
Let $A,B$ be bispectra such that all entries $A_{i,j}, B_{i,j}$ of $A$ and $B$ are sequential
colimits of $k$-smooth simplicial schemes. Let $\phi: A\to B$ be a stable motivic equivalence.
Then the induced morphism
   $$C_*Fr(\phi):  C_*Fr(A)\to  C_*Fr(B)$$
is a stable motivic equivalence.
\end{cor}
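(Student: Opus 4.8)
The plan is to deduce the statement immediately from the naturality of $\zeta$ recorded in Notation~\ref{Constr_A_C_*Fr_A} together with Theorem~\ref{Thm_A_C_*Fr_A}. First I would write down the commutative square of bispectra produced by naturality of $\zeta_{(-)}$ applied to $\phi$,
   $$\xymatrix{A\ar[r]^{\zeta_A}\ar[d]_\phi&C_*Fr(A)\ar[d]^{C_*Fr(\phi)}\\
               B\ar[r]^{\zeta_B}&C_*Fr(B)}$$
and observe that, since by hypothesis every entry $A_{i,j}$ and $B_{i,j}$ is a sequential colimit of $k$-smooth simplicial schemes, Theorem~\ref{Thm_A_C_*Fr_A} applies to both $A$ and $B$, so that $\zeta_A$ and $\zeta_B$ are isomorphisms in $SH(k)$.

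Next, because $\phi$ is a stable motivic equivalence it becomes an isomorphism in $SH(k)$. Passing the square to $SH(k)$ gives the identity $C_*Fr(\phi)=\zeta_B\circ\phi\circ\zeta_A^{-1}$ there, exhibiting $C_*Fr(\phi)$ as a composite of three isomorphisms of $SH(k)$; equivalently, one invokes the two-out-of-three property of isomorphisms in the triangulated category $SH(k)$. Hence $C_*Fr(\phi)$ is an isomorphism in $SH(k)$, which is precisely to say that it is a stable motivic equivalence. Functoriality in $A$ and $B$ is immediate from the functoriality statements already contained in Notation~\ref{Constr_A_C_*Fr_A} and Theorem~\ref{Thm_A_C_*Fr_A}.

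There is essentially no obstacle here: the argument is a one-line diagram chase through the two-out-of-three property once the naturality square is in place. The only points worth a word of care are purely formal — that $C_*Fr(\phi)$ is a well-defined morphism of bispectra, which is exactly the naturality of the construction in Notation~\ref{Constr_A_C_*Fr_A}, and that, by definition of $SH(k)$ as the homotopy category of the stable motivic model structure, a morphism of bispectra is a stable motivic equivalence if and only if it is an isomorphism in $SH(k)$. With these bookkeeping remarks settled, the proof is complete.
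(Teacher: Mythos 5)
Your argument is exactly the paper's: the naturality square for $\zeta$, the fact that $\zeta_A$ and $\zeta_B$ are isomorphisms in $SH(k)$ by Theorem~\ref{Thm_A_C_*Fr_A}, and two-out-of-three for isomorphisms. The proposal is correct and coincides with the published proof.
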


\begin{proof}
This follows from the commutativity of the diagram
$$\xymatrix{A\ar[rr]^{\phi}\ar[d]_{\zeta_A} && B \ar[d]^{\zeta_B}\\
                       C_*Fr(A)\ar[rr]^{C_*Fr(\phi)} && C_*Fr(B),}$$
and Theorem \ref{Thm_A_C_*Fr_A}.
\end{proof}

\begin{cor}\label{uhom_gmp_C_*Fr_f}
Under the hypotheses of Corollary \ref{C_*Fr_f}
the natural map of bispectra
   $$\uhom(\gmpn,C_*Fr(A))\to\uhom(\gmpn,C_*Fr(B))$$
induced by the map $C_*Fr(\phi)$ is a stable motivic equivalence for all $n\geq 0$.
\end{cor}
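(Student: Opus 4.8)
The plan is to bootstrap from Corollary~\ref{C_*Fr_f} by exploiting that $\gmp$-suspension is invertible both in $SH_{\nis}(k)$ and in $SH(k)$. By Corollary~\ref{C_*Fr_f}, the map $C_*Fr(\phi)\colon C_*Fr(A)\to C_*Fr(B)$ is already a stable motivic equivalence, so it suffices to show that the endofunctor $\uhom(\gmpn,-)$ of $Sp_{S^1,\bb G_m}(k)$ carries it to a stable motivic equivalence; writing $\uhom(\gmpn,-)=\uhom(\gmp,-)^{\circ n}$, it is enough to treat $n=1$, i.e. to show that $\uhom(\gmp,-)$ preserves \emph{every} stable motivic equivalence of bispectra.

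First I would record that $\uhom(\gmp,-)$, being right adjoint to $-\wedge\gmp$ and --- since $\gmp$ corresponds to a compact object --- also commuting with homotopy colimits, is an exact, coproduct-preserving endofunctor of $SH_{\nis}(k)$; being right adjoint to the autoequivalence $-\wedge\gmp$ of $SH_{\nis}(k)$ it is the inverse autoequivalence, so it preserves all stable Nisnevich equivalences of bispectra (concretely, the bigraded Nisnevich homotopy sheaves of $\uhom(\gmp,\cc E)$ are a shift of those of $\cc E$). Next I would descend to $SH(k)$. The localization $q\colon SH_{\nis}(k)\to SH(k)$ intertwines $-\wedge\gmp$ with the $\gmp$-smash on $SH(k)$; since the latter is invertible on $SH(k)$, a routine manipulation shows that $q$ also intertwines $\uhom(\gmp,-)$ with $(-\wedge\gmp)^{-1}$. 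Hence, for a map $f$ of bispectra, $f$ is a stable motivic equivalence iff $q(f)$ is an isomorphism in $SH(k)$, iff (smashing with the invertible $\gmp^{-1}$) $q(\uhom(\gmp,f))$ is an isomorphism in $SH(k)$, iff $\uhom(\gmp,f)$ is a stable motivic equivalence. The same thing can be phrased through Theorem~\ref{locfunctor}: $f$ is a stable motivic equivalence iff $\cone(f)$ lies in the kernel subcategory $\cc S$ with $SH(k)=SH_{\nis}(k)/\cc S$, and $\uhom(\gmp,-)$ maps $\cc S$ into $\cc S$ since $\cc S$ is stable under $-\wedge\gmp$ and under its inverse. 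Applying the conclusion to $f=C_*Fr(\phi)$, and iterating $n$ times, finishes the proof.

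I expect the only delicate point to be the interface between the \emph{point-set} functor $\uhom(\gmpn,-)$ appearing in the statement and its derived version: one must justify that the entrywise $\uhom(\gmp,-)$ itself --- not merely $R\uhom(\gmp,-)$ --- preserves stable Nisnevich equivalences, equivalently that it carries $\cc S$ into $\cc S$, which is where the bigraded homotopy sheaf computation enters. A complementary route, closer to the surrounding framed machinery, avoids this by using the naturality square
$$\xymatrix{\uhom(\gmpn,C_*Fr(A))\ar[r]\ar[d]&\uhom(\gmpn,C_*Fr(B))\ar[d]\\
            \uhom(\gmpn,C_*Fr(A)_f)\ar[r]&\uhom(\gmpn,C_*Fr(B)_f)}$$
with $(-)_f$ the levelwise stable local fibrant replacement: each weight $C_*Fr(A)(j)=(C_*Fr(A_{i,j}))_i$ has framed, radditive and $\bb A^1$-invariant presheaves of stable homotopy groups, so by Lemma~\ref{lemprem} and the sublemma of~\cite[Section~12]{GP1} the vertical maps are stable local (hence stable motivic) equivalences with motivically fibrant targets, and the lower map is then a model for the image of $C_*Fr(\phi)$ under $R\uhom(\gmpn,-)$. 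Here the obstacle shifts to the iteration for general $n$: one must check that the framed, radditive, $\bb A^1$-invariant property of the homotopy presheaves survives one application of $\uhom(\gmp,-)$ (through the presheaf-level $\gmp$-contraction), so that Lemma~\ref{lemprem} and the sublemma can be invoked $n$ times in succession.
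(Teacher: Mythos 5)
Your first route, as written, has a genuine gap that cannot be closed in the generality you want. The functor $\uhom(\gmp,-)$ appearing in the statement is the \emph{point-set} internal hom on $Sp_{S^1,\bb G_m}(k)$, and while its total derived functor is indeed the inverse of the autoequivalence $-\wedge\gmp$ on $SH_{\nis}(k)$ (and on $SH(k)$), the point-set functor itself is only a right Quillen functor: it preserves weak equivalences between \emph{fibrant} objects, and your parenthetical claim that the bigraded Nisnevich homotopy sheaves of $\uhom(\gmp,\cc E)$ are a shift of those of $\cc E$ is false for a general non-fibrant $\cc E$. So "$\uhom(\gmp,-)$ preserves every stable motivic equivalence of bispectra" is not true, and no argument through $\cc S$ or through invertibility of $\gmp$ in the homotopy category can establish it. The whole content of the corollary is that for the \emph{particular} bispectra $C_*Fr(A)$, $C_*Fr(B)$ the point-set hom does compute the derived one, and that is exactly where the framed structure must enter.

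Your second route is the paper's actual argument, and you correctly identify the two key inputs: Lemma~\ref{lemprem} (each weight of $C_*Fr(A)$ has framed, radditive, stable, $\bb A^1$-invariant homotopy presheaves, so its local fibrant replacement is already motivically fibrant, i.e. $C_*Fr(A)^{mf}=C_*Fr(A)^f$) and the sublemma of~\cite[Section~12]{GP1}. Two corrections, though. First, the iteration you worry about is a non-issue: the sublemma is invoked for $\gmpn$ in one step, giving $(\uhom(\gmpn,C_*Fr(A)))^{mf}=\Omega_{\gmpn}(C_*Fr(A)^f)$ directly, so you never need to check that the framed/radditive/$\bb A^1$-invariance properties survive an application of $\uhom(\gmp,-)$. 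Second, your route stops too early: identifying the bottom map of your square as a model for $R\uhom(\gmpn,-)$ of $C_*Fr(\phi)$ does not by itself show it is a stable motivic equivalence, since $C_*Fr(A)^f$ is only \emph{levelwise} motivically fibrant, not stably fibrant as a bispectrum. The paper finishes by applying the $\gmp$-stabilization $\Theta^\infty_{\gmp}$: a map is a stable motivic equivalence iff $\Theta^\infty_{\gmp}$ of its level motivically fibrant replacement is a sectionwise level equivalence, $\Theta^\infty_{\gmp}$ commutes with $\Omega_{\gmpn}$ on level motivically fibrant bispectra, and $\Theta^\infty_{\gmp}(C_*Fr(A)^f)\to\Theta^\infty_{\gmp}(C_*Fr(B)^f)$ is a sectionwise level equivalence by Corollary~\ref{C_*Fr_f}. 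Adding that last chain to your second route would complete the proof.
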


\begin{proof}
Recall that a map of bispectra $f:X\to Y$ is a stable motivic equivalence if and only if
$\Theta^\infty_{\gmp}(f):\Theta^\infty_{\gmp}(X^{mf})\to\Theta^\infty_{\gmp}(Y^{mf})$ is a
sectionwise level equivalence, where ``$mf$" refers to the level stable motivic fibrant replacement
functor in the category of motivic $S^1$-spectra. Here $\Theta^\infty_{\gmp}$ is the standard
stabilization functor in the $\gmp$-direction.

By Corollary~\ref{C_*Fr_f} the map $C_*Fr(\phi): C_*Fr(A)\to C_*Fr(B)$ is a stable motivic equivalence.
Since both bispectra are such that in each weight stable homotopy presheaves are framed radditive stable
and $\bb A^1$-invariant, $C_*Fr(A)^{mf}=C_*Fr(A)^f$
and $C_*Fr(B)^{mf}=C_*Fr(B)^f$ by Lemma~\ref{lemprem} , where ``$f$" refers here to the level stable local fibrant replacement
functor in the category of motivic $S^1$-spectra. It follows from the sublemma of~\cite[Section~12]{GP1}
that also
$$(\uhom(\gmpn,C_*Fr(A)))^{mf}=\Omega_{\gmpn}(C_*Fr(A)^f) \ \text{and} \ (\uhom(\gmpn,C_*Fr(B)))^{mf}=\Omega_{\gmpn}(C_*Fr(B)^f).$$

We see that the map of the statement is a stable motivic equivalence if and only if the map
$\Theta^\infty_{\gmp}(\Omega_{\gmpn}(C_*Fr(A)^f))\to\Theta^\infty_{\gmp}(\Omega_{\gmpn}(C_*Fr(B)^f))$
is a sectionwise level equivalence. Since $\Theta^\infty_{\gmp}$ commutes with $\Omega_{\gmpn}$
on level motivically fibrant bispectra, the latter is equivalent to saying that
$\Omega_{\gmpn}\Theta^\infty_{\gmp}(C_*Fr(A)^f)\to\Omega_{\gmpn}\Theta^\infty_{\gmp}(C_*Fr(B)^f)$
is a sectionwise level equivalence. But the latter arrow is such, because
$\Theta^\infty_{\gmp}(C_*Fr(A)^f)\to\Theta^\infty_{\gmp}(C_*Fr(B)^f)$
is a sectionwise level equivalence.
\end{proof}

\begin{lem}\label{yyy}
Let $F$ be an $S^1$-spectrum such that every entry $F_j$ of $F$ is a sequential
colimit of $k$-smooth simplicial schemes. Then
the map of $S^1$-spectra
$$\tau_n: \uhom(\gmpn,C_*Fr(F))\to\uhom(\gmpn \wedge \gmp,C_*Fr(F\wedge\gmp))=\uhom(\gmpnl,C_*Fr(F\wedge\gmp))$$
is a stable local equivalence for all $n\geq 0$.
\end{lem}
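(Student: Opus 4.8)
The plan is to reduce the statement to the Cancellation Theorem for framed motives, in the form already used repeatedly above (the ``sublemma of~\cite[Section~12]{GP1}''), applied to each weight of $F$. First I would observe that it suffices to treat the case $n=0$: indeed $\gmpnl=\gmpn\wedge\gmp$, and the map $\tau_n$ is obtained from $\tau_0$ by applying the functor $\uhom(\gmpn,-)$, which preserves stable local equivalences between $S^1$-spectra whose entries are sequential colimits of $k$-smooth simplicial schemes — this is exactly the content used in the proof of Corollary~\ref{uhom_gmp_C_*Fr_f} and follows from the sublemma of~\cite[Section~12]{GP1} together with Lemma~\ref{lemprem}. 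So the whole question is whether
\[
\tau_0:C_*Fr(F)\to\uhom(\gmp,C_*Fr(F\wedge\gmp))
\]
is a stable local equivalence.

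Next I would unwind what $\tau_0$ is: by construction (see Notation~\ref{Constr_A_C_*Fr_A} for the bispectrum version) the structure map of $C_*Fr$ in the $\gmp$-direction is the composite $C_*Fr(F)\to\uhom(\gmp,C_*Fr(F\wedge\gmp))\xrightarrow{u}\uhom(\gmp,C_*Fr(F\wedge\gmp))$ where the first arrow is the one induced by the unit/adjunction of $C_*Fr$ and the second by the structure map $u$ of $F$; here we want only the first arrow. The point is that $C_*Fr(F)$ has framed, radditive, stable and $\bb A^1$-invariant presheaves of stable homotopy groups in each entry, and the same is true of $C_*Fr(F\wedge\gmp)$. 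The Cancellation Theorem in the framed setting — precisely the statement that for an $\bb A^1$-invariant framed $S^1$-spectrum $G$ with the above properties the canonical map $G\to\uhom(\gmp,G\wedge\gmp)$, equivalently $G\to\Omega_{\gmp}(G\wedge\gmp)$, is a stable local equivalence — is exactly what is invoked as ``the sublemma of~\cite[Section~12]{GP1}'' in the proofs of Lemma~\ref{lemst}, Corollary~\ref{lemstcor} and Corollary~\ref{uhom_gmp_C_*Fr_f}. Applying it to $G=C_*Fr(F)$ (note $C_*Fr(F)\wedge\gmp\simeq C_*Fr(F\wedge\gmp)$ as framed $S^1$-spectra, again by~\cite[Section~12]{GP1}, since $C_*Fr$ commutes with smashing by $\gmp$ up to stable local equivalence) gives that $\tau_0$ is a stable local equivalence.

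The main obstacle is the identification $C_*Fr(F\wedge\gmp)\simeq C_*Fr(F)\wedge\gmp$ and the verification that under this identification $\tau_0$ becomes the canonical cancellation map rather than some twist of it; this is a bookkeeping point about the way $C_*Fr$ is defined on the smash product and about the compatibility of the structure maps of $C_*Fr(F)$ (as a bispectrum-in-$\gmp$ built from an $S^1$-spectrum) with those coming from Voevodsky's framed correspondences. Once that is pinned down, everything else is a formal application of results already available: Lemma~\ref{lemprem} to know these spectra are motivically fibrant after local stable fibrant replacement, and the sublemma of~\cite[Section~12]{GP1} for the cancellation statement and for the behaviour of $\uhom(\gmpn,-)$. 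I would therefore devote the bulk of the written proof to this identification and treat the reduction to $n=0$ and the final invocation of cancellation briefly.
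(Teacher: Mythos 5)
Your reduction to the case $n=0$ is fine and agrees with how the paper handles $n>0$ (a square comparing with local stable fibrant replacements, plus the sublemma of \cite[Section~12]{GP1}). The problem is the core case $n=0$. First, the sublemma of \cite[Section~12]{GP1} is \emph{not} the Cancellation Theorem: it only says that for a framed spectrum $\cc X$ with stable, radditive, $\bb A^1$-invariant homotopy presheaves the map $\uhom(\gmp,\cc X)\to\uhom(\gmp,\cc X_f)$ is a stable local equivalence and identifies $\pi_*^{\nis}\Omega_{\gmp}(\cc X_f)$ with the contraction $(\pi_*^{\nis}\cc X)_{-1}$. The Cancellation Theorem (\cite{AGP}) is the much deeper statement that $M_{fr}(Y)\to\uhom(\gmp,M_{fr}(Y\wedge\gmp))$ is a stable local equivalence for a space $Y$ that is a sequential colimit of smooth simplicial schemes; crucially, the $\gmp$ there sits \emph{inside} the argument of $C_*Fr$, not smashed externally with the framed motive. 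Your formulation ``$G\to\Omega_{\gmp}(G\wedge\gmp)$ for any nice framed $G$'', combined with the identification $C_*Fr(F)\wedge\gmp\simeq C_*Fr(F\wedge\gmp)$, is exactly where the argument fails: the canonical map $C_*Fr(F)\wedge\gmp\to C_*Fr(F\wedge\gmp)$ is a stable \emph{motivic} equivalence (both sides compute $F\wedge\gmp$ by Theorem~\ref{Thm_A_C_*Fr_A}), but it is not a stable \emph{local} equivalence --- the left-hand side is not $\bb A^1$-local even when $C_*Fr(F)$ is, whereas the right-hand side is. If this identification held locally, cancellation would be essentially formal. Since the lemma is precisely a local statement, this substitution is not a ``bookkeeping point'' but the step that breaks.

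The missing idea is the device that reduces a general $S^1$-spectrum $F$ to framed motives of spaces, where \cite{AGP} actually applies: filter $F$ by its truncations $L_n(F)=(F_0,\dots,F_n,F_n\wedge S^1,F_n\wedge S^2,\dots)$, so that $C_*Fr(L_n(F))$ agrees above level $n$ with the shifted framed motive $M_{fr}(F_n)[-n]$, and likewise for $\uhom(\gmp,C_*Fr(L_n(F)\wedge\gmp))$. On each filtration stage $\tau_0$ becomes exactly the map $M_{fr}(F_n)\to\uhom(\gmp,M_{fr}(F_n\wedge\gmp))$ of the Cancellation Theorem, and $\tau_0$ for $F$ itself is the sequential colimit of these stable local equivalences. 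With that in place, your treatment of $n>0$ goes through as you describe.
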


\begin{proof}
Firstly, prove the case $n=0$.
The $S^1$-spectrum $F$ has a natural
filtration $F=\colim_n L_n(F)$, where
$L_n(F)$ is the spectrum
   $$(F_{0},F_{1},\ldots,F_{n},F_{n}\wedge S^1,F_{n}\wedge S^2,\ldots).$$
Then $C_*Fr(F)=C_*Fr(\colim_n L_n(F))=\colim_nC_*Fr(L_n(F))$, where
$C_*Fr(L_n(F))$ is the spectrum
   $(C_*Fr(F_{0}),C_*Fr(F_{1}),\ldots,C_*Fr(F_{n}),C_*Fr(F_{n}\otimes S^1),C_*Fr(F_{n}\otimes S^2),\ldots).$
For brevity we write it as
$$(C_*Fr(F_{0}),C_*Fr(F_{1}),\ldots,C_*Fr(F_{n-1}), M_{fr}(F_{n})[-n]),$$
where $M_{fr}(F_{n})$ is the framed motive of $F_n$ and $M_{fr}(F_{n})[-n]_r:=M_{fr}(F_{n})_{r-n}$ for $r\geq n$.
Similarly,
   $$\uhom(\gmp,C_*Fr(F\wedge\gmp))=\colim_n\uhom(\gmp,C_*Fr(L_n(F)\wedge\gmp)),$$
where $\uhom(\gmp,C_*Fr(L_n(F)\wedge\gmp))$ is the $S^1$-spectrum
   $$(\uhom(\gmp,C_*Fr(F_{0}\wedge\gmp)),\ldots,\uhom(\gmp,C_*Fr(F_{n-1}\wedge\gmp)), \uhom(\gmp,M_{fr}(F_{n}\wedge\gmp)[-n])).$$
By the Cancellation Theorem for framed motives~\cite{AGP} the arrow
   $$M_{fr}(F_n)\to \uhom(\gmp,M_{fr}(F_n\wedge\gmp))$$
is a stable local equivalence, and hence so are the arrows
   $$C_*Fr(L_n(F))\to \uhom(\gmp,C_*Fr(L_n(F)\wedge\gmp)).$$
We conclude that the arrow $C_*Fr(F)\to \uhom(\gmp,C_*Fr(F\wedge\gmp))$
is a sequential colimit of stable local equivalences.

Next take $n>0$ and consider a commutative diagram
   $$\xymatrix{\uhom(\gmpn,C_*Fr(F))\ar[r]\ar[d]&\uhom(\gmpn,C_*Fr(F)_f)\ar[d]\\
                       \uhom(\bb G_m^{\wedge n},\uhom(\gmp,C_*Fr(F\wedge\gmp)))\ar[r]&
                       \uhom(\bb G_m^{\wedge n},\uhom(\gmp,C_*Fr(F\wedge\gmp))_f),}$$
where ``$f$" refers to local stable fibrant replacement. By the first part of the proof
the right arrow is a sectionwise level equivalence.
The horizontal arrows are stable local equivalences by the sublemma of~\cite[Section~12]{GP1}.
Our statement now follows.
\end{proof}

It is also useful to have the following important

\begin{thm}\label{eshche}
If $B$ is an $S^1$-spectrum such that every entry $B_i$ of $B$ is a sequential
colimit of $k$-smooth simplicial schemes, then the canonical morphism
$C_*Fr(B)\to \Omega^\infty_{\gmp}C_*Fr(\Sigma^\infty_{\gmp}B)$
is a stable local equivalence.
\end{thm}

\begin{proof}
It follows from Lemma~\ref{yyy} that $C_*Fr(\Sigma^\infty_{\gmp}B)\in SH_{\nis}^{fr}(k)$.
By Theorem~\ref{Thm_A_C_*Fr_A} the morphism of bispectra
   $$\Sigma^\infty_{\gmp}B\to C_*Fr(\Sigma^\infty_{\gmp}B)$$
is a stable motivic equivalence. Our theorem now follows from Corollary~\ref{sigmaomega}.
\end{proof}

\begin{lem}\label{abtheta}
Let $A,B$ be two bispectra such that every entry $A_{i,j},B_{i,j}$ of $A,B$ are sequential
colimits of $k$-smooth simplicial schemes.
\begin{itemize}
\item[$(1)$] For every weight $j\geq 0$, the $S^1$-spectrum $C_*Fr(A_{*,j})$ is $\bb A^1$-local;
\item[$(2)$] If $f:A\to B$ is a stable motivic equivalence of bispectra, then the induced map
of $S^1$-spectra $f_j:\Theta^{\infty}_{\gmp}(C_*Fr(A))(j)\to\Theta^{\infty}_{\gmp}(C_*Fr(B))(j)$
is a stable local equivalence for every weight $j\geq 0$.
\end{itemize}
\end{lem}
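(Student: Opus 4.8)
\textbf{Proof proposal for Lemma~\ref{abtheta}.}

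The plan is to handle the two statements in turn, bootstrapping the second from the first together with the Cancellation Theorem already recorded in Lemma~\ref{yyy} and Corollary~\ref{uhom_gmp_C_*Fr_f}. For part~(1), fix a weight $j\geq 0$ and recall that $C_*Fr(A_{*,j})$ is, by construction, the $S^1$-spectrum obtained by applying $C_*Fr$ levelwise to the simplicial-scheme entries $A_{i,j}$. The key input is that $C_*Fr$ is the canonical functor producing $\bb A^1$-invariant framed motivic spaces: in each simplicial degree the presheaf of stable homotopy groups of $C_*Fr(A_{*,j})$ is a framed, radditive, $\bb A^1$-invariant quasi-stable presheaf. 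First I would invoke the relevant result from~\cite{GP1} (the one underlying Lemma~\ref{lemprem}) that guarantees $\pi_*(C_*Fr(A_{*,j}))$ is $\bb A^1$-invariant, so that a level stable local fibrant replacement of $C_*Fr(A_{*,j})$ is already motivically fibrant. That is precisely the assertion that $C_*Fr(A_{*,j})$ is $\bb A^1$-local.

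For part~(2), let $f:A\to B$ be a stable motivic equivalence. I would argue weight by weight on the $\gmp$-stabilization $\Theta^\infty_{\gmp}(C_*Fr(A))$. By definition of $\Theta^\infty_{\gmp}$, the $j$-th weight of $\Theta^\infty_{\gmp}(C_*Fr(A))$ is the $S^1$-spectrum $\colim_n \uhom(\gmpn, C_*Fr(A)(n+j))$, and similarly for $B$; the transition maps in this colimit are exactly the maps $\tau_n$ of Lemma~\ref{yyy} (shifted in weight), which are stable local equivalences. So $\Theta^\infty_{\gmp}(C_*Fr(A))(j)$ is a sequential colimit along stable local equivalences of the $S^1$-spectra $\uhom(\gmpn, C_*Fr(A)(n+j))$. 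Now Corollary~\ref{C_*Fr_f} tells us $C_*Fr(f):C_*Fr(A)\to C_*Fr(B)$ is a stable motivic equivalence, and then Corollary~\ref{uhom_gmp_C_*Fr_f} (applied entrywise, with $A,B$ replaced by the relevant weighted $S^1$-spectra, all of whose entries are sequential colimits of smooth simplicial schemes) gives that each $\uhom(\gmpn, C_*Fr(A)(n+j))\to\uhom(\gmpn, C_*Fr(B)(n+j))$ is a stable motivic equivalence. Since all these $S^1$-spectra are $\bb A^1$-local by part~(1) and the sublemma of~\cite[Section~12]{GP1}, ``stable motivic equivalence'' between them is the same as ``stable local equivalence.'' A sequential colimit of stable local equivalences along stable local equivalences is again a stable local equivalence (stable local equivalences of $S^1$-spectra are detected on Nisnevich sheaves of stable homotopy groups, and these commute with sequential colimits), so $f_j$ is a stable local equivalence, as required.

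The main obstacle I anticipate is the bookkeeping in part~(2): one must be careful that the colimit presentation of $\Theta^\infty_{\gmp}(C_*Fr(A))(j)$ really is a colimit along the maps $\tau_n$ of Lemma~\ref{yyy}, and that the hypotheses of Corollary~\ref{uhom_gmp_C_*Fr_f} genuinely apply to the weighted $S^1$-spectra $C_*Fr(A)(m)$ — i.e. that passing from a bispectrum to a fixed weight preserves the ``sequential colimit of smooth simplicial schemes'' hypothesis, which it does since $A_{i,j}$ is such a colimit for each $(i,j)$. A secondary point to verify carefully is the compatibility of $\bb A^1$-localization with the $\gmp$-loop spaces $\uhom(\gmpn,-)$, which is exactly what the sublemma of~\cite[Section~12]{GP1} supplies; once that is in hand, the identification of motivic equivalences with local equivalences on the relevant $\bb A^1$-local objects, and hence the whole argument, goes through.
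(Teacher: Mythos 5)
Part (1) of your proposal is fine in substance: you reduce $\bb A^1$-locality to the statement that the stable homotopy presheaves of $C_*Fr(A_{*,j})$ are framed, radditive, stable and $\bb A^1$-invariant and then invoke Lemma~\ref{lemprem}. The paper instead runs a more hands-on filtration argument ($A_{*,j}=\colim_n L_n(A_{*,j})$ and \cite[7.5]{GP1} applied to each truncated piece), which is really the justification for the $\bb A^1$-invariance you are taking as input; but since the paper itself asserts this input elsewhere (e.g.\ in the proof of Corollary~\ref{uhom_gmp_C_*Fr_f}), I regard this part as acceptable.

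Part (2), however, contains a genuine gap. You claim that Corollary~\ref{uhom_gmp_C_*Fr_f}, ``applied entrywise,'' yields that each map $\uhom(\gmpn,C_*Fr(A)(n+j))\to\uhom(\gmpn,C_*Fr(B)(n+j))$ is a stable motivic (hence, by $\bb A^1$-locality, stable local) equivalence. That corollary is a statement about \emph{bispectra} whose input is a stable motivic equivalence of \emph{bispectra}; a stable motivic equivalence $f:A\to B$ of bispectra does not induce stable motivic equivalences on the individual weighted $S^1$-spectra $A(m)\to B(m)$, so there is no legitimate ``entrywise'' version to invoke. Equivalence of the weights is recovered only after $\gmp$-stabilization, which is exactly what the lemma is trying to establish — so this step is essentially circular. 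Relatedly, your parenthetical claim that the transition maps of the colimit $\colim_n\uhom(\gmpn,C_*Fr(A)(n+j))$ ``are exactly the maps $\tau_n$ of Lemma~\ref{yyy}'' is not right: each transition map is the composite of a $\tau_n$-type map with the map induced by the bonding map $A(n+j)\wedge\gmp\to A(n+j+1)$ of $A$ in the $\gmp$-direction, and the latter need not be any kind of equivalence. If both of your claims held, the colimit would be equivalent to its zeroth term and you would conclude that $C_*Fr(A)(j)\to C_*Fr(B)(j)$ is already a stable local equivalence — which is false in general and would make $\gmp$-stabilization pointless.

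The repair is to argue at the level of whole bispectra, as the paper does: by Corollary~\ref{C_*Fr_f} the map $C_*Fr(f)$ is a stable motivic equivalence of bispectra; taking level stable local fibrant replacements gives level \emph{motivically} fibrant bispectra $C_*Fr(A)_f$, $C_*Fr(B)_f$ (this is where your part (1)/Lemma~\ref{lemprem} enters), so that $\Theta^\infty_{\gmp}(C_*Fr(A)_f)\to\Theta^\infty_{\gmp}(C_*Fr(B)_f)$ is a sectionwise level equivalence. One then compares $\Theta^\infty_{\gmp}(C_*Fr(A))(j)$ with $\Theta^\infty_{\gmp}(C_*Fr(A)_f)(j)$ via the sublemma of \cite[Section~12]{GP1}, which says these comparison maps are stable local equivalences, and concludes by two-out-of-three in the commutative square. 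You should replace the ``entrywise'' step with this argument.
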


\begin{proof}
(1). In each weight $j$, the $S^1$-spectrum $A_{*,j}$ has a natural
filtration $A_{*,j}=\colim_n L_n(A_{*,j})$, where
$L_n(A_{*,j})$ is the spectrum
   $$A_{0,j},A_{1,j},\ldots,A_{n,j},A_{n,j}\wedge S^1,A_{n,j}\wedge S^2,\ldots$$
Then $C_*Fr(A_{*,j})=C_*Fr(\colim_n L_n(A_{*,j}))=\colim_nC_*Fr(L_n(A_{*,j}))$, where
$C_*Fr(L_n(A_{*,j})$ is the spectrum
   $$C_*Fr(A_{0,j}),C_*Fr(A_{1,j}),\ldots,C_*Fr(A_{n,j}),C_*Fr(A_{n,j}\otimes S^1),C_*Fr(A_{n,j}\otimes S^2),\ldots$$
If we take Nisnevich local replacements of all entries of the latter spectrum, we get that
   $$C_*Fr(A_{0,j})_f,C_*Fr(A_{1,j})_f,\ldots,C_*Fr(A_{n,j})_f,C_*Fr(A_{n,j}\otimes S^1)_f,C_*Fr(A_{n,j}\otimes S^2)_f,\ldots$$
is a motivically fibrant $S^1$-spectrum starting from level $n+1$ by~\cite[7.5]{GP1}. So
each $C_*Fr(L_n(A_{*,j}))$ is $\bb A^1$-local, and hence so is $C_*Fr(A_{*,j})$
(we use here the fact that a sequential colimit of motivically fibrant spectra is
motivically fibrant in the stable projective model structure).

(2). By Corollary \ref{C_*Fr_f} the induced map $C_*Fr(f):C_*Fr(A)\to C_*Fr(B)$ is a
stable motivic equivalence of bispectra.
Let ``$f$" refer to local stable fibrant replacement.
Since in each weight $j\geq 0$ the $S^1$-spectra $C_*Fr(A(j))_f,C_*Fr(B(j))_f$ are motivically
fibrant, the induced map $f:\Theta^{\infty}_{\gmp}(C_*Fr(A)_f)\to\Theta^{\infty}_{\gmp}(C_*Fr(B)_f)$
is a sectionwise level equivalence with $C_*Fr(A)_f=(C_*Fr(A(0))_f,C_*Fr(A(1))_f,\ldots)$,
$C_*Fr(B)_f=(C_*Fr(B(0))_f,C_*Fr(B(1))_f,\ldots)$ respectively.

Consider a commutative diagram
   $$\xymatrix{\Theta^{\infty}_{\gmp}(C_*Fr(A))(j)\ar[r]^{f}\ar[d]
       &\Theta^{\infty}_{\gmp}(C_*Fr(B))(j)\ar[d]\\
       \Theta^{\infty}_{\gmp}(C_*Fr(A)_f)(j)\ar[r]^{f}
       &\Theta^{\infty}_{\gmp}(C_*Fr(B)_f)(j)}$$
It follows from the sublemma of~\cite[Section~12]{GP1} that the vertical arrows are
stable local equivalences. Since the bottom arrow is a sectionwise level equivalence,
we get that the top arrow is a stable local equivalence, as required.
\end{proof}

We finish the section by the following useful

\begin{lem}
If $A$ is a bispectrum such that every entry $A_{i,j}$ of $A$ is a sequential
colimit of $k$-smooth simplicial schemes, then $\Theta^{\infty}_{\gmp}(C_*Fr(A))\in SH_{\nis}^{fr}(k)$.
\end{lem}

\section{The category of spectral functors}
In this section we introduce the category of spectral functors, which will be used
to give other models for $SH(k)$ and $SH^{eff}(k)$. Recall that the category
of pointed Nisnevich sheaves $Shv_\bullet(Sm/k)$ is closed symmetric monoidal.
For brevity, we shall often write $[F,G]$ to denote $\uhom(F,G)$.

\begin{lem}\label{cfr0}
For any $U,V,X\in Sm/k$ one has
   $$[U_+,V_+](X)=\Hom_{Shv_\bullet(Sm/k)}((U\times X)_+,V_+)=Fr_0(U\times X,V),$$
where  and $Fr_0(U\times X,V)$ is the pointed set of framed correspondences of level 0 in the sense
of Voevodsky~\cite{Voe2}.
\end{lem}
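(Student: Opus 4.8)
The plan is to route everything through the pointed sheaf $(U\times X)_+$: the first equality comes from the closed symmetric monoidal structure of $Shv_\bullet(Sm/k)$, and the second from identifying pointed sheaf maps out of $(U\times X)_+$ with Voevodsky's level-$0$ framed correspondences. The second identification is cleanest to prove in the general form $\Hom_{Shv_\bullet(Sm/k)}(W_+,V_+)=Fr_0(W,V)$ for arbitrary $W,V\in Sm/k$, and then specialised to $W=U\times X$.

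For the first equality, I would use that $(Shv_\bullet(Sm/k),\wedge,pt_+)$ is closed symmetric monoidal, so $\Hom_{Shv_\bullet(Sm/k)}(A\wedge B,C)\cong\Hom_{Shv_\bullet(Sm/k)}(A,\uhom(B,C))$, together with the elementary fact that for any pointed sheaf $H$ and $X\in Sm/k$ one has $H(X)\cong\Hom_{Shv_\bullet(Sm/k)}(X_+,H)$ — a pointed map $X_+\to H$ is the same as an unpointed map $X\to H$ from the representable sheaf $X$, which is a section of $H$ over $X$ by the Yoneda lemma. Applying this to $H=\uhom(U_+,V_+)$ gives $[U_+,V_+](X)\cong\Hom_{Shv_\bullet(Sm/k)}(X_+\wedge U_+,V_+)$, and it remains to invoke the standard (essentially formal) identification $X_+\wedge U_+\cong(U\times X)_+$ of pointed sheaves: in a topos, finite products distribute over coproducts, so the smash of two sheaves with disjoint basepoints is the product sheaf with a disjoint basepoint, and the representable sheaf of a product is the product of the representable sheaves.

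For the second equality, fix $W,V\in Sm/k$. A pointed map $W_+\to V_+$ is an unpointed map of sheaves $W\to V_+$; since the underlying sheaf of $V_+$ is the sheaf represented by the smooth scheme $V\sqcup\spec k$, this is a morphism of schemes $W\to V\sqcup\spec k$, equivalently a decomposition into clopen subschemes $W=W_0\sqcup W_1$ together with a morphism $W_0\to V$, the basepoint corresponding to $W_0=\emptyset$. On the other side, I would unwind Voevodsky's definition \cite{Voe2} at level $n=0$: an explicit framed correspondence of level $0$ from $W$ to $V$ consists of a closed subset $Z\subseteq\bb A^0_W=W$ finite over $W$, an étale neighbourhood $W'\to W$ of $Z$, a function $\varphi\colon W'\to\bb A^0$ with $\varphi^{-1}(0)=Z$, and a morphism $g\colon W'\to V$. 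As $\bb A^0=\spec k$, the zero-locus condition forces $W'=Z$, so $W'\to W$ is simultaneously a closed immersion and étale, hence an open immersion onto a clopen subscheme $W_0\subseteq W$; the correspondence thus amounts to a clopen $W_0\subseteq W$ together with a morphism $W_0\to V$, with the empty correspondence as basepoint — exactly the data above. Hence $\Hom_{Shv_\bullet(Sm/k)}(W_+,V_+)=Fr_0(W,V)$, and taking $W=U\times X$ finishes the proof; all the bijections are manifestly natural in $U$, $V$ and $X$.

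The argument is essentially bookkeeping, and the only steps needing genuine care — the main, if mild, obstacle — are: (a) the presheaf-versus-sheaf point in identifying the underlying sheaf of $V_+$ with the one represented by $V\sqcup\spec k$, which rests on how finite coproducts behave in the Nisnevich topos (equivalently, that Nisnevich stalks are connected); and (b) checking that the étale-neighbourhood datum in level $0$ collapses, i.e. that a morphism which is both a closed immersion and étale is an open immersion onto a clopen subscheme. Once these are in hand, both equalities are immediate.
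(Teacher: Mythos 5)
Your proposal is correct and follows the same route as the paper for the first equality (the closed monoidal adjunction together with $X_+\wedge U_+\cong(U\times X)_+$). For the second equality the paper simply cites Voevodsky's notes (and \cite{GP1}), whereas you supply the direct unwinding of the level-$0$ definition --- the collapse of the \'etale neighbourhood to a clopen subscheme and the identification with maps $W\to V\sqcup\spec k$ --- which is exactly the content of the cited fact, so your argument is a valid self-contained substitute.
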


\begin{proof}
One has
   $$[U_+,V_+](X)={\Hom}_{Shv_\bullet(Sm/k)}(U_+\wedge X_+,V_+)=\Hom_{Shv_\bullet(Sm/k)}((U\times X)_+,V_+).$$
The fact that $\Hom_{Shv_\bullet(Sm/k)}((U\times
X)_+,V_+)=Fr_0(U\times X,V)$ is shown in~\cite{Voe2} (see~\cite{GP1}
as well).
\end{proof}

\begin{rem}
It is worth mentioning that $Fr_0(-,V)$, $V\in Sm/k$, is the
Nisnevich sheaf associated to the presheaf $U\in
Sm/k\mapsto\Hom_{Sm/k}(U,V)\sqcup pt$. Whenever $U$ is connected, we
have that $Fr_0(U,V)=\Hom_{Sm/k}(U,V)\sqcup pt$.
\end{rem}

\begin{dfn}
(1) We denote by $\Fr$ the category enriched over
$Shv_\bullet(Sm/k)$ whose objects are those of $Sm/k$ and
$Shv_\bullet(Sm/k)$-objects given by $Fr_0(U\times-,V)$, $U,V\in
Sm/k$. It is indeed a $Shv_\bullet(Sm/k)$-category by
Lemma~\ref{cfr0}. Note that $\Fr$ is $Shv_\bullet(Sm/k)$-symmetric
monoidal with $U\otimes V:=U\times V$. Clearly, its underlying
category is $Fr_0(k)$, i.e. the category of smooth $k$-schemes and
framed correspondences of level 0.

(2) Denote by $\Fun(\Fr):=[\Fr,Shv_\bullet(Sm/k)]$ the
$Shv_\bullet(Sm/k)$-category of enriched functors from $\Fr$ to
$Shv_\bullet(Sm/k)$. It is closed symmetric monoidal by Day's
Theorem~\cite{Day}. Its monoidal unit $[pt,-]$ is represented by
$pt\in\Fr$. By~\cite[2.4]{DRO} $\Fun(\Fr)$ is tensored and
cotensored over $Shv_\bullet(Sm/k)$.

(3) Consider the category $\Delta^{\op}\Fun(\Fr)$ of
simplicial objects in $\Fun(\Fr)$. By~\cite[2.4]{DRO}
$\Delta^{\op}\Fun(\Fr)$ is tensored and cotensored over pointed
motivic spaces $\mathbb M_\bullet:=\Delta^{\op}Shv_\bullet(Sm/k)$.
Note that $\Fr$ is also enriched over $\mathbb M_\bullet$ in a canonical way.
Denote by $Sp_{S^1}[\Fr]$ the category of $S^1$-spectra associated
with the category $\Delta^{\op}\Fun(\Fr)$ and call it the {\it
category of spectral functors}. By definition, the {\it underlying
$S^1$-spectrum\/} of a spectral functor $\cc X\in Sp_{S^1}[\Fr]$ is
its evaluation $\cc X(pt)$ at $pt$.
\end{dfn}

\begin{ex}
Let $Fr_n(U,V)$, $n\geq 0$, be the set of framed correspondences of
level $n$ pointed at the empty correspondence. Then $Fr_n(-,-)$ has
an action of the category $Fr_0(k)$. The assignment
   $$\cc Fr_n(-,X):V\in\Fr\mapsto Fr_n(-,X\times V)\in Shv_\bullet(Sm/k),\quad X\in Sm/k,$$
together with the maps
   $$[U_+,V_+]\mapsto \underline{\Hom}(Fr_n(-,X\times U), Fr_n(-,X\times V)),$$
induced by the action of $Fr_0(k)$ on $Fr_n(U,V)$-s, gives an object
of $\Fun(\Fr)$ denoted by $\cc Fr_n(-,X)$. Stabilizing over $n$, we
get an object of $\Fun(\Fr)$
   $$\cc Fr(-,X):=\colim(\cc Fr_0(-,X)\xrightarrow\sigma\cc Fr_1(-,X)\xrightarrow\sigma\cdots).$$

We can equally define $\cc Fr(F\wedge-,X):=\uhom(F,\cc Fr(-,X))\in\Fun(\Fr)$ for any pointed sheaf
$F\in Shv_\bullet(Sm/k)$.
If we set $C_*\cc Fr(-,X):=\cc
Fr(\Delta^\bullet\times-,X)$, we get an object of
$\Delta^{\op}\Fun(\Fr)$.

Let $\Delta[\bullet]$ be the standard cosimplicial simpicial set $n\longmapsto\Delta[n]$. If there is no
likelihood of confusion, we sometimes regard it as a cosimplicial smooth scheme, where each
$\Delta[n]$ is regarded as the disjoint union $\bigsqcup_{\Delta[n]}\spec(k)$.
Recall that the simplicial function object between pointed motivic spaces $A$ and $B$ is given by
   $${\bf S}_{\bullet}(A,B)=\Hom_{\bb M_\bullet}(A\wedge\Delta[\bullet]_{+},B)=\Hom_{\bb M_\bullet}(A,B(\Delta[\bullet]\times-)).$$
In turn, the $\bb M_\bullet$-object of morphisms between $A$ and $B$ is defined by
   $$X\in Sm/k\longmapsto{\bf S}_{\bullet}(A,B(X\times-))
       =\Hom_{\bb M_\bullet}(A,B(X\times\Delta[\bullet]\times-)).$$

In Section~\ref{frbisp} we defined 
the category of framed motivic spaces $\bb M^{fr}_\bullet$ and 
the canonically induced faithful functor
$\iota:\bb M^{fr}_\bullet\rightarrow\bb M_\bullet$ obtained from
$i:{Sm}/k\to{Fr}_{+}(k)$. 
Moreover, the definition of the pairing
${Fr}_{0}(k)\times{Fr}_{+}(k)\xrightarrow{\otimes}{Fr}_{+}(k)$ was given in that section.
It takes $(X,Y)$ to $X\times Y$ and $(f,\alpha)$ to $f\times \alpha$.

We are going to define a natural enrichment 
of the category of framed motivic spaces $\bb M^{fr}_\bullet$ over $\bb M_\bullet$.


First, we can associate a framed Nisnevich sheaf $\cc F(X\times -)$ to
every framed Nisnevich sheaf $\cc F$ and every $X\in{Fr}_{0}(k)$. In detail,
given $\alpha\in{Fr}_n(U',U)$ put
$\alpha^*: \cc F(X\times U)\to \cc F(X\times U')$
to be $(\id_X\times \alpha)^*$.
If $\cc F$ is a pointed framed Nisnevich sheaf then the framed Nisnevich sheaf
$\cc F(X\times -)$ is pointed also.

Second, every morphism $f: X'\to X$ in ${Fr}_{0}(k)$ induces a morphism
of framed sheaves $f^*: \cc F(X\times -)\to \cc F(X'\times -)$. Namely, if $U\in {Fr}_+(k)$ one sets
$f^*: \cc F(X\times U)\to \cc F(X'\times U)$ to be $(f\times \id_U)^*$.
If $\cc F$ is a pointed framed Nisnevich sheaf, then the morphism of framed sheaves
$f^*: \cc F(X\times -)\to \cc F(X'\times -)$
is a morphism of pointed framed Nisnevich sheaves.

Finally, $\bb M^{fr}_\bullet$ is naturally enriched over $\bb M_\bullet$. Namely,
$$
{\bb M_\bullet}(A,B)(X):=\Hom_{\bb M^{fr}_\bullet}(A,B(X\times\Delta[\bullet]\times-)),
\quad A,B\in\bb M^{fr}_\bullet, \ X\in {Sm}/k.
$$
The enriched composition in $\bb M^{fr}_\bullet$ is inherited from the enriched composition in $\bb M_\bullet$.

\begin{dfn}\label{frsp}
A {\it framed motivic enriched functor\/} is an $\bb M_\bullet$-enriched functor $\cc X:\cc Fr_0(k)\to\bb M^{fr}_\bullet$
between the $\bb M_\bullet$-enriched categories $\cc Fr_0(k)$ and $\bb M^{fr}_\bullet$.
A typical example of a framed motivic enriched functor is given by $C_*\cc Fr(-,X)$.

Denote by $\Fun^{fr}(\Fr)$ the category of the framed motivic enriched functors and
$\bb M_\bullet$-natural transformations between them.
\end{dfn}

\begin{rem}
The underlying category of the $\bb M_\bullet$-category $\cc Fr_0(k)$
equals $Fr_0(k)$. Every $\bb M_\bullet$-enriched functor
$\cc X:\cc Fr_0(k)\to\bb M_\bullet$ gives rise to a functor
$\cc X\colon Fr_0(k)\rightarrow\bb M_\bullet$ denoted by the same letter.

Unravelling the previous definition, a framed motivic enriched functor is equivalent to giving the following data:
\begin{itemize}
\item[$\diamond$] an $\bb M_\bullet$-functor $\cc X:\cc Fr_0(k)\to\bb M_\bullet$;
\item[$\diamond$] a functor $\cc X':Fr_0(k)\to\bb M^{fr}_\bullet$;
\item[$\diamond$]
the induced functor $\cc X:Fr_0(k)\to\bb M_\bullet$
equals the composite functor $Fr_0(k)\xrightarrow{\cc X'}\bb M^{fr}_\bullet\xrightarrow{\iota}\bb M_\bullet$
such that the canonical morphism
$$
[U,V](Y)
\longrightarrow
\Hom_{\bb M_\bullet}(\cc X(U),\cc X(V)(Y\times-))
$$
factors through $\Hom_{\bb M^{fr}_\bullet}(\cc X'(U),\cc X'(V)(Y\times-))$ for all
$U,V,Y\in Fr_0(k)$.
\end{itemize}
\end{rem}

Voevodsky~\cite[Section~3]{VoeICM}
defined a realization functor from simplicial sets to Nisnevich
sheaves $|-|:sSets\to Shv_{\nis}(Sm/k)$ such that
$|\Delta[n]|=\Delta^n_k$, where $\Delta[n]$ is the standard
$n$-simplex. Under this notation the cosimpicial scheme
$\Delta^\bullet_k$ equals $|\Delta[\bullet]|$. For every $\ell\geq 0$
denote by $sd^\ell\Delta^\bullet_k$ the cosimplicial Nisnevich sheaf
$|sd^\ell\Delta[\bullet]|$. Under this notation we then have a
canonical isomorphism in $\Delta^{\op}\Fun(\Fr)$
   $$Ex^\ell(C_*\cc Fr(-, X))=\cc Fr(|sd^\ell\Delta[\bullet]|_+\wedge-,X).$$
It follows that $Ex^\ell(C_*\cc Fr(\cc X))\in\Delta^{\op}\Fun(\Fr)$ as well as
   $$Ex^\infty(C_*\cc Fr(\cc X)):=\colim_\ell Ex^\ell(C_*\cc Fr(\cc X))\in\Delta^{\op}\Fun(\Fr).$$
Then $Ex^\infty(C_*\cc Fr(-,X))$ is sectionwise fibrant in the first argument
and the map $C_*\cc Fr(-, X)\to
Ex^\infty(C_*\cc Fr(-, X))$ is a sectionwise weak equivalence in the first argument.

Likewise, the Segal spectrum $\cc
M_{fr}(X):=Ex^\infty(C_*\cc Fr(-,X\otimes\bb S))$, where $\bb S$ is the
ordinary sphere spectrum, gives a spectral functor. Observe that the
underlying $S^1$-spectrum of $\cc M_{fr}(X)$ is the framed motive
$M_{fr}(X)$ of $X$ in the sense of~\cite{GP1}. We shall refer to
$\cc M_{fr}(X)$ as the {\it framed motive spectral functor of $X$}.
\end{ex}

There is a natural evaluation functor
   \begin{equation}\label{eval}
    ev_{\bb G_m}:Sp_{S^1}[\Fr]\to Sp_{S^1,\bb G_m}(k),
   \end{equation}
where $Sp_{S^1,\bb G_m}(k)$ is the category of
$(S^1,\gmp)$-bispectra in $\bb M_\bullet$, defined as follows. For
every $U,V\in Sm/k$ and $\cc X\in\Fun(\Fr)$ we have natural
morphisms
   $$V_+\to[U_+,(U\times V)_+]\to\underline{\Hom}_{Shv_\bullet(Sm/k)}(\cc X(U),\cc X(U\times V))$$
inducing a morphism $\cc X(U)\wedge V_+\to\cc X(U\times V)$ in
$Shv_\bullet(Sm/k)$.
Given $\cc X\in\Delta^{\op}\Fun(\Fr)$, we define $\cc X(\gmpn)\wedge\gmp\to\cc X(\bb
G_m^{\wedge n+1})$ as the geometric realization of morphisms
$[k\in\Delta^{\op}\mapsto\{\cc X((\bb G_m^{\wedge
n})_k)\wedge(\gmp)_k\to\cc X((\bb G_m^{\wedge (n+1)})_k)\}]$.
The latter morphisms are now easily extended to spectral functors
yielding the desired evaluation functor $ev_{\bb G_m}$. One also has
the induced morphism
   $$\cc X(\gmpn)\to\uhom(\gmp,\cc X((\bb G_m^{\wedge (n+1)})).$$
The claim of Section~\ref{frbisp} implies that if 
$\cc X\in\Fun^{fr}[\Fr]$, then $\cc X(\gmpn)\to\uhom(\gmp,\cc X((\bb G_m^{\wedge (n+1)}))$
is a morphism in $\bb M^{fr}_\bullet$.

\section{Recovering $SH(k)$ from framed spectral functors}\label{recspfun}

In Section~\ref{frbisp} we recovered the stable motivic homotopy
category $SH(k)$ as the category of framed bispectra. In this
section we give another method of reconstructing $SH(k)$ from framed
spectral functors.

\begin{dfn}\label{frspfundfn}
(1) We say that a graded object $\cc X=(\cc X_0,\cc X_1,\ldots)$ in $\Fun^{fr}[\Fr]$ of framed motivic enriched functors
in the sense of Definition~\ref{frsp} is a {\it framed spectral functor\/} if the following properties are satisfied:

\begin{itemize}
\item[$\diamond$] for each $i\geq 0$, there is a morphism $\cc X_i\to\uhom(S^1,\cc X_{i+1})$ in $\Fun^{fr}[\Fr]$. In
particular, $\cc X$ is an $S^1$-spectrum of framed motivic enriched functors;

\item[$\diamond$] (Cancellation Theorem) for every $U\in Sm/k$, the canonical
morphism between $S^1$-spectra of framed motivic spaces
   $$\cc X(\gmpn)\to\uhom(\gmp,\cc X((\bb G_m^{\wedge (n+1)}))$$
is a stable local equivalence for any $n\geq 0$. In particular, the
bispectrum, defined as in~\eqref{eval},
   $$ev_{\bb G_m}(\cc X(-\times U))=(\cc X(U),\cc X(\gmp\times U),\cc X(\bb G_m^{\wedge 2}\times U),\ldots)$$
is a framed bispectrum in the sense of
Definition~\ref{deffrsp}, i.e. $ev_{\bb G_m}(\cc X(-\times U))\in
SH_{\nis}^{fr}(k)$;

\item[$\diamond$]  for every $U\in Sm/k$ the canonical map $pr:\cc X(\bb A^1\times U)\to\cc X(U)$
is a stable local equivalence of $S^1$-spectra of framed motivic spaces;

\item[$\diamond$] it is {\it Nisnevich excisive\/} in the sense that
for every elementary Nisnevich square in $Sm/k$
   $$\xymatrix{U'\ar[r]\ar[d]&Y'\ar[d]\\
                       U\ar[r]&Y}$$
the square of the $S^1$-spectra of framed motivic spaces
   $$\xymatrix{\cc X(U')\ar[r]\ar[d]&\cc X(Y')\ar[d]\\
                       \cc X(U)\ar[r]&\cc X(Y)}$$
is locally homotopy cartesian (=homotopy cocartesian in the stable structure).
We also require $\cc X(\emptyset)=*$.
\end{itemize}

(2) We define a category $SH_{S^1}^{fr}[\Fr]$ as follows. Its
objects are the framed spectral functors and morphisms are defined as
   $$SH_{S^1}^{fr}[\Fr](\cc X,\cc Y):=SH_{\nis}^{fr}(k)(ev_{\bb
      G_m}(\cc X), ev_{\bb G_m}(\cc Y)).$$
We call $SH_{S^1}^{fr}[\Fr]$ the {\it category of framed spectral functors}.
\end{dfn}

A basic example of a framed spectral functor is the framed motive spectral functor
$\cc M_{fr}(X)$ of $X\in Sm/k$. If we apply $\uhom(\gmpn,-)$ to $\cc M_{fr}(X)$, $n\geq 0$, we get
another framed spectral functor, denoted by $\Omega_{\gmpn}\cc M_{fr}(X)$.
Explicitly, $\Omega_{\gmpn}\cc M_{fr}(X)=C_*\cc Fr(\gmpn\times-,X\otimes\bb S)$.
It is important to note that, by definition, the category
$SH_{S^1}^{fr}[\Fr]$ has nothing to do with any sort of motivic
equivalences. It is purely of local nature.

\begin{dfn}\label{upsilon1}
Let $\cc F$ be a symmetric bispectrum. Let $\cc F[n]$ be the usual $n$th shift of $\cc F$ in the $\gmp$-direction.
For each $i\geq 0$ let $\cc F(i)$ and $\cc F[1](i)$ be the $i$th weights of $\cc F$ and $\cc F[1]$ respectively
(see Notation \ref{weight}).
Recall that there is a map of {\it symmetric\/} bispectra
$$\upsilon_{\cc F}: \gmp\wedge\cc F\to\cc F[1]$$
defined as follows: for each weight $i\geq 0$ it is the composition of maps of $S^1$-spectra
   $$\gmp\wedge \cc F(i)\xrightarrow{tw} \cc F(i)\wedge\gmp\xrightarrow{u}\cc F(i+1)\xrightarrow{\chi_{i,1}}\cc F(1+i)=\cc F[1](i),$$
where $\chi_{i,1}$ is the shuffle permutation.
Sometimes we write $\upsilon$ to denote $\upsilon_{\cc F}$ dropping $\cc F$ from notation.
Notice that $\upsilon_{\cc F}$ cannot be defined for nonsymmetric bispectra.
\end{dfn}

The main result of this section is as follows.

\begin{thm}\label{spfuncat}
The category of spectral framed functors $SH_{S^1}^{fr}[\Fr]$ is
compactly generated triangulated with compact generators
$\{\Omega_{\gmpn}\cc M_{fr}(X)\mid X\in Sm/k,n\geq 0\}$ and the shift functor $\cc X[1]=\cc X(-\otimes S^1)$.
Moreover, the composite functor
   $$F\circ ev_{\bb G_m}:SH_{S^1}^{fr}[\Fr]\to SH(k)$$
is an equivalence of triangulated categories, where
$F:SH_{\nis}^{fr}(k)\to SH(k)$ is the triangulated equivalence of
Theorem~\ref{recover}.
\end{thm}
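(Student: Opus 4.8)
The plan is to establish the theorem in two movements: first showing $SH_{S^1}^{fr}[\Fr]$ is compactly generated triangulated with the stated generators, then showing $F\circ ev_{\bb G_m}$ is an equivalence. For the first part, I would transport the triangulated structure from $SH_{\nis}^{fr}(k)$ via $ev_{\bb G_m}$: since $SH_{S^1}^{fr}[\Fr]$ is, by Definition~\ref{frspfundfn}(2), a full subcategory of $SH_{\nis}^{fr}(k)$ up to the functor $ev_{\bb G_m}$ (morphisms are defined to be those in $SH_{\nis}^{fr}(k)$ between images), the key points to check are that $ev_{\bb G_m}$ lands in $SH_{\nis}^{fr}(k)$ on framed spectral functors (immediate from the first $\diamond$ of Definition~\ref{frspfundfn}), that the shift $\cc X[1]=\cc X(-\otimes S^1)$ corresponds to the suspension in $SH_{\nis}^{fr}(k)$, and that triangles — i.e.\ cofiber sequences — of framed spectral functors are again framed spectral functors. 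The last is the technical heart of part one: given a map $\cc X\to\cc Y$ of framed spectral functors, its cofiber $\cc Z$ must satisfy the three $\diamond$ conditions; stability, radditivity, $\bb A^1$-invariance and the Cancellation property (condition (4) of Definition~\ref{deffrsp}) all pass to cofibers because they are expressed by long exact sequences of stable homotopy sheaves and by the two-out-of-three property for stable local equivalences, and Nisnevich excision passes to cofibers since homotopy cartesian squares are closed under cofibers of maps of squares. Compact generation then reduces to identifying $\{\Omega_{\gmpn}\cc M_{fr}(X)\}$ as generators: under $ev_{\bb G_m}$, $\cc M_{fr}(X)$ maps to the twisted framed motive bispectrum $M_{fr}^{\bb G}(X)$, which by Example~(1) of Section~\ref{frbisp} is isomorphic in $SH(k)$ to $\Sigma^\infty_{S^1}\Sigma^\infty_{\gmp}X_+$, and these (with their $\gmp$-desuspensions and $S^1$-shifts) are the standard compact generators of $SH(k)$.

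For the second part, I would use the factorization
$$SH_{S^1}^{fr}[\Fr]\xrightarrow{ev_{\bb G_m}}SH_{\nis}^{fr}(k)\xrightarrow{F}SH(k),$$
where $F$ is already known to be a triangulated equivalence by Theorem~\ref{recover}. So it suffices to prove that $ev_{\bb G_m}:SH_{S^1}^{fr}[\Fr]\to SH_{\nis}^{fr}(k)$ is an equivalence. Full faithfulness is essentially a tautology: morphisms in $SH_{S^1}^{fr}[\Fr]$ were \emph{defined} to be morphisms in $SH_{\nis}^{fr}(k)$ between the images under $ev_{\bb G_m}$, so I only need that $ev_{\bb G_m}$ is injective on isomorphism classes at the level needed, or rather I need essential surjectivity. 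For essential surjectivity, given a framed bispectrum $\cc E\in SH_{\nis}^{fr}(k)$, I must produce a framed spectral functor $\cc X$ with $ev_{\bb G_m}(\cc X)\cong\cc E$ in $SH_{\nis}^{fr}(k)$. The natural candidate is built from the big framed motive construction applied functorially: one should be able to lift $\cc E$ (via its cofibrant replacement consisting of colimits of simplicial smooth schemes) to an object of $\Delta^{\op}\Fun(\Fr)$ by applying the $C_*\cc Fr$ construction in the $\Fr$-enriched setting, then stabilizing, using the lemmas of Section~\ref{frbisp} (Lemmas~\ref{lemprem}, \ref{lemst} and the material of Section~3, especially Lemma~\ref{abtheta} and Corollary after it, which says $\Theta^\infty_{\gmp}(C_*Fr(A))\in SH_{\nis}^{fr}(k)$) to verify the three $\diamond$ conditions.

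The main obstacle I expect is essential surjectivity of $ev_{\bb G_m}$, specifically the construction of a \emph{genuine spectral functor} (an object of $Sp_{S^1}[\Fr]$, i.e.\ an enriched functor on $\Fr$ valued in simplicial sheaves, spectrified in the $S^1$-direction) that realizes a prescribed framed bispectrum. The framed bispectrum $\cc E$ only carries the structure of a presheaf with framed correspondences level-by-level; promoting this to the full enriched-functor data over $\Fr$ — so that the evaluation $ev_{\bb G_m}$ recovers exactly the $\gmp$-directed bispectrum structure of $\cc E$ — requires producing compatible maps $\cc X(U)\wedge V_+\to\cc X(U\times V)$ for all $U,V$, not merely the $\gmp$-structure maps. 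The resolution should be that the $C_*\cc Fr$ construction from the Example in Section~5 does precisely this: $C_*\cc Fr(-,X\otimes\bb S)$ is manifestly an object of $\Delta^{\op}\Fun(\Fr)$, and applying $C_*\cc Fr$ levelwise to a cofibrant replacement of $\cc E$ (whose entries are colimits of simplicial smooth schemes) yields a spectral-functor-valued bispectrum whose $ev_{\bb G_m}$, after the $\gmp$-stabilization $\Theta^\infty_{\gmp}$, is stably locally equivalent to $\cc M^b_{fr}(\cc E)$, which in turn is isomorphic to $\cc E$ in $SH(k)$ by the zigzag $\eta_{\cc E}$ of~\eqref{zigzagE}. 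The three $\diamond$ conditions for this candidate then follow: Nisnevich excision from the Nisnevich-excisiveness built into $C_*Fr$, $\bb A^1$-invariance of $pr:\cc X(\bb A^1\times U)\to\cc X(U)$ from the $\bb A^1$-invariance of $C_*Fr$ (via~\cite[Section~7]{GP1}), and the framed-bispectrum condition from Lemma~\ref{abtheta} and its corollary. Verifying that these identifications are all compatible and natural — so that full faithfulness and essential surjectivity fit together into an equivalence — is the bookkeeping-heavy part, but no genuinely new input beyond Theorem~\ref{recover} and Sections~3--5 should be needed.
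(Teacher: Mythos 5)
Your overall architecture is the paper's: factor through $F:SH_{\nis}^{fr}(k)\to SH(k)$, observe that full faithfulness of $ev_{\bb G_m}$ is tautological from Definition~\ref{frspfundfn}(2), and reduce everything to essential surjectivity, realized by an enriched $C_*\cc Fr$-type construction applied to a cofibrant replacement of $E$. Two points, however, are genuine gaps. First, your plan for the triangulated structure — ``given a map $\cc X\to\cc Y$ of framed spectral functors, its cofiber must satisfy the three $\diamond$ conditions'' — does not get off the ground as stated, because a morphism in $SH_{S^1}^{fr}[\Fr]$ is by definition an element of $SH_{\nis}^{fr}(k)(ev_{\bb G_m}\cc X, ev_{\bb G_m}\cc Y)$ and need not be realized by any strict map of spectral functors; there is nothing to take the cofiber of inside $Sp_{S^1}[\Fr]$. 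The paper avoids this entirely by proving the equivalence first and then declaring the compactly generated triangulated structure on $SH_{S^1}^{fr}[\Fr]$ to be the preimage of that on $SH(k)$; your closure-under-cofibers verification is then both unnecessary and, in the order you propose it, unavailable. (Your identification of the generators and of the shift via Nisnevich excisiveness is correct and matches the paper.)

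Second, and more seriously, your essential surjectivity sketch stops exactly where the real work begins. The candidate functor (the paper's $\cc M_{fr}^E=\colim_n\bb GC_*Fr^E[n]$ with $\bb GC_*Fr^E[n]:X\mapsto\uhom(\gmpn,C_*Fr(X_+\wedge E^c(n)))$) has evaluation bispectrum with $i$-th weight $\colim_n\uhom(\gmpn,C_*Fr(\gmp^{\wedge i}\wedge E^c(n)))$ — the $\gmp$-twists sit in the \emph{argument} of the functor — whereas in $E$ (or in $\cc M^b_{fr}(E)=\Theta^\infty_{\gmp}\Theta^\infty_{S^1}C_*Fr(E^c)$) they sit in the \emph{weight index}. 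Comparing the two is not covered by the statement ``$\cc M^b_{fr}(\cc E)\cong\cc E$''; it requires the comparison maps $f_{n,i}$ built from the twist and the structure maps of $E^c$, and proving these are compatible with stabilization in $n$ and are stable local equivalences forces one to work with \emph{symmetric} fibrant bispectra so that the shuffle permutations $\chi_{i,1}$ and the map $\upsilon:\gmp\wedge\cc F\to\cc F[1]$ exist (the paper stresses that $\upsilon$ cannot be defined for nonsymmetric bispectra). This is the content of the paper's Claims 1--2 and Lemmas~\ref{upsilon}, \ref{yyy}, \ref{FrEr}, \ref{frrr}, and none of it is visible in your outline; the difficulty you flag (producing the enriched structure maps $\cc X(U)\wedge V_+\to\cc X(U\times V)$) is handled for free by $C_*\cc Fr$, while the actual crux — reconciling argument-twists with weight-twists — is what you would need to supply to complete the proof.
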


\begin{proof}
The fact that $F\circ ev_{\bb G_m}$ is fully faithful follows from Definition~\ref{frspfundfn}(2)
and Theorem~\ref{recover}. We need to show that every bispectrum $E\in SH(k)$ is isomorphic to
the evaluation bispectrum of a spectral framed functor. Without loss of generality
we may assume that $E$ is a symmetric $(S^1,\gmp)$-bispectrum which is also fibrant
in the stable projective motivic model structure of symmetric bispectra. Then it is also motivically fibrant
as a (non-symmetric) bispectrum. Let $\alpha:E^c\to E$ be the cofibrant replacement of $E$ in the category
of symmetric bispectra. Then $\alpha$ is an isomorphism in $SH(k)$ and every entry of $E^c$
is a sequential colimit of $k$-smooth simplicial schemes.

Let $R^n_{\gmp} C_*Fr(E^c)$, $n\geq 0$, be the bispectrum $\uhom(\gmpn,C_*Fr(E^c[n]))$,
where $E^c[n]$ is the usual $n$th shift of $E^c$ in the $\gmp$-direction. In more detail, for each
weight $i\geq 0$ it equals the $S^1$-spectrum
   $$R^n_{\gmp} C_*Fr(E^c)(i):=\uhom(\gmpn,C_*Fr(E^c(n+i))),$$
where $E^c(n+i)$ is the $(n+i)$th weight of the bispectrim $E^c$. Each bonding map equals the composition
   \begin{multline*}
    \uhom(\gmpn,C_*Fr(E^c(n+i)))\wedge\gmp\to\uhom(\gmpn,C_*Fr(E^c(n+i)\wedge\gmp))\xrightarrow u\\
    \to\uhom(\gmpn,C_*Fr(E^c(n+i+1))).
   \end{multline*}
It is useful to regard the symmetric bispectrum $E^c=(E^c(0),E^c(1),\ldots)$ as a symmetric $\gmp$-spectrum
in the category of symmetric motivic $S^1$-spectra. Then there is a map of bispectra
   \begin{equation}\label{rn}
    R^n_{\gmp} C_*Fr(E^c)\to R^{n+1}_{\gmp} C_*Fr(E^c)
   \end{equation}
defined at each weight $i$ as the composition
   \begin{gather*}\label{rni}
    \uhom(\gmpn,C_*Fr(E^c(n+i)))\to\uhom(\gmpn,\uhom(\gmp,C_*Fr(E^c(n+i)\wedge\gmp)))\xrightarrow{u}\\
     \uhom(\bb G_m^{\wedge n+1},C_*Fr(E^c(n+i+1)))\xrightarrow{\chi_{i,1}}\uhom(\bb G_m^{\wedge n+1},C_*Fr(E^c(n+1+i))),
   \end{gather*}
where $\chi_{i,1}$ is the shuffle permutation in $\Sigma_{n+i+1}$ permuting the last element with
preceding $i$ elements and preserving the first $n$ elements. An equivalent definition is given by
the following composition of bispectra
   \begin{gather*}
    \uhom(\gmpn,C_*Fr(E^c[n]))\xrightarrow{\alpha_n} \uhom(\gmpn,\uhom(\gmp,C_*Fr(E^c[n]\wedge\gmp)))\xrightarrow{tw}\\
     \uhom(\bb G_m^{\wedge n+1},C_*Fr(\gmp\wedge E^c[n]))\xrightarrow{\beta_n}\uhom(\bb G_m^{\wedge n+1},C_*Fr(E^c[n+1])),
   \end{gather*}
where the arrow $\beta_n$ is induced by the canonical map of {\it symmetric\/} bispectra
$$\upsilon_n=\upsilon_{E^c[n]}: \gmp\wedge E^c[n]\to E^c[n][1]=E^c[n+1].$$
Following Definition~\ref{upsilon1} in each weight $i\geq 0$ it is the composition of maps of $S^1$-spectra
   $$\gmp\wedge E^c[n](i)\xrightarrow{tw}E^c[n](i)\wedge\gmp\xrightarrow{u}E^c[n](i+1)\xrightarrow{\chi_{i,1}}E^c[n](1+i).$$
Recall that $\upsilon_n$ cannot be defined for nonsymmetric bispectra.

{\it Claim 1.} The map~\eqref{rn} is a stable motivic equivalence of bispectra.

To prove this claim, we need the following


\begin{lem}\label{upsilon}
Suppose $\cc F$ is a motivically fibrant symmetric bispectrum. Then the morphism
   $$\upsilon:\gmp\wedge\cc F\to\cc F[1]$$
from Definition \ref{upsilon1} is a stable
motivic equivalence of ordinary nonsymmetric bispectra. More generally, the arrow
$$\upsilon_{A\wedge\cc F}: \gmp\wedge(A\wedge\cc F)\to (A\wedge\cc F)[1]$$
is a stable
motivic equivalence of ordinary nonsymmetric bispectra for any projectively cofibrant motivic space $A\in\bb M_\bullet$.
\end{lem}

\begin{proof}
Consider a commutative diagram
   $$\xymatrix{\cc F\ar[dr]^\sim\ar[r]&\uhom(\gmp,\gmp\wedge\cc F)\ar[r]\ar[d]&\Omega_{\gmp}(\cc F[1])\ar[d]^\sim\\
                       &\Omega_{\gmp}((\gmp\wedge\cc F)^{nf})\ar[r]&\Omega_{\gmp}(\cc F[1]^{nf}).}$$
Here $nf$ refers to the fibrant replacement functor in the category of ordinary nonsymmetric bispectra.
The left slanted arrow and the right vertical arrow are stable
motivic equivalences of ordinary nonsymmetric bispectra. The upper composite arrow is a
stable motivic equivalence between symmetric spectra. Since both spectra are motivically fibrant,
it follows that this composite arrow is a naive sectionwise and levelwise weak equivalence.
We see that the bottom arrow is a stable motivic equivalence of bispectra.
Since $\Omega_{\gmp}$ reflects stable motivic equivalences of bispectra,
$(\gmp\wedge\cc F)^{nf}\to\cc F[1]^{nf}$ is a stable motivic equivalence of bispectra.
The latter implies $\gmp\wedge\cc F\to\cc F[1]$ is a stable motivic equivalence of bispectra as well.

Now let $A$ be a projectively cofibrant motivic space. Then factor $\upsilon:\gmp\wedge\cc F\to\cc F[1]$
as the composition $\gmp\wedge\cc F\to Z\to\cc F[1]$ of a projective stable trivial cofibration
followed by a fibration of nonsymmetric bispectra. It follows that
$\gmp\wedge (A\wedge\cc F)\to A\wedge Z$ is a stable
trivial cofibration. Since $\cc F$ is fibrant as a nonsymmetric bispectrum, then so is $Z$.
By assumption, $\upsilon$ is a stable motivic equivalence. Therefore $Z\to\cc F[1]$ is a level
equivalence. Regarding the entries of $Z$ and $\cc F[1]$ as injective cofibrant motivic spaces,
we conclude that $A\wedge Z\to (A\wedge\cc F)[1]$ is a level equivalence.
\end{proof}

Now let us prove Claim 1. The arrow
$\gmp\wedge E^c[n]\xrightarrow{\upsilon_n} E^c[n+1]$
is a stable motivic equivalence by the preceding lemma.
Every entry of $\gmp\wedge E^c[n]$ and $E^c[n+1]$
is a sequential colimit of $k$-smooth simplicial schemes.
Thus by Corollary
\ref{C_*Fr_f}
the arrow
$C_*Fr(\upsilon_n): C_*Fr(\gmp\wedge E^c[n])\to C_*Fr(E^c[n+1])$
is a stable motivic equivalence.
By Corollary \ref{uhom_gmp_C_*Fr_f}
the map of bispectra
$$\beta_{n}: \uhom(\gmpnl,C_*Fr(\gmp\wedge E^c[n]))\to\uhom(\gmpnl,C_*Fr(E^c[n+1]))$$
is a stable motivic equivalence for all $n\geq 0$.
By Lemma \ref{yyy}
the map of $S^1$-spectra
$$\alpha_n: \uhom(\gmpn,C_*Fr(E^c[n]))\to\uhom(\bb G_m^{\wedge n+1},C_*Fr(E^c[n]\wedge\gmp))$$
is a level stable local equivalence for all $n\geq 0$.
The maps $\alpha_n$ and $\beta_n$ are stable motivic equivalences, and hence so is
the map~\eqref{rn} of bispectra and Claim 1 follows.

We continue the proof of the theorem by setting
   $$R^\infty_{\gmp} C_*Fr(E^c):=\colim(C_*Fr(E^c)\to R^1_{\gmp} C_*Fr(E^c)\to R^2_{\gmp} C_*Fr(E^c)\to\cdots).$$
{\it Claim 2.} The composition
$$E^c\xrightarrow{\zeta_{E^c}} C_*Fr(E^c)\to R^\infty_{\gmp} C_*Fr(E^c)$$
is a stable motivic equivalence.

The left arrow is a stable motivic equivalence
by Theorem \ref{Thm_A_C_*Fr_A}.
The right arrow a stable motivic equivalence by
Claim 1 (we use the fact that a sequential colimit of stable motivic equivalences is
a stable motivic equivalence).

To complete the proof of the theorem, we need to construct a framed spectral functor such that its evaluation bispectrum is
equivalent to $R^\infty_{\gmp}C_*Fr(E^c)$.

For any $n\geq 0$ we define a spectral functor $\bb GC_*Fr^{E}[n]$ by
   $$X\in\cc Fr_0(k)\mapsto\uhom(\gmpn,C_*Fr(X_+\wedge E^c(n))).$$
Recall that
$ev_{\bb G_m}(\bb GC_*Fr^{E}[n])(i)=\bb GC_*Fr^{\bb G_m^{\wedge i}\wedge E}[n](pt)=\uhom(\gmpn,C_*Fr(\bb G_m^{\wedge i}\wedge E^c(n)))$.
The bonding maps $ev_{\bb G_m}(\bb GC_*Fr^{E}[n])(i)\wedge\gmp\to ev_{\bb G_m}(\bb GC_*Fr^{E}[n])(i+1)$
are defined as the composite maps
   \begin{gather*}
       \uhom(\gmpn,C_*Fr(\bb G_m^{\wedge i}\wedge E^c(n)))\wedge\gmp\to
       \uhom(\gmpn,C_*Fr(\bb G_m^{\wedge i}\wedge E^c(n)\wedge\gmp))\\
       \xrightarrow{tw}\uhom(\gmpn,C_*Fr(\bb G_m^{\wedge i}\wedge\gmp\wedge E^c(n))).
    \end{gather*}
It follows now from Lemma~\ref{yyy} that $ev_{\bb G_m}(\bb GC_*Fr^{E}[n])\in SH^{fr}_{\nis}(k)$.

There is a natural morphism of spectral functors $\bb GC_*Fr^{E}[n]\to \bb GC_*Fr^{E}[n+1]$
defined at every $X\in\cc Fr_0(k)$ by the composition
   \begin{gather*}
    \uhom(\gmpn,C_*Fr(X_+\wedge E^c(n)))\to\uhom(\bb G_m^{\wedge n+1},C_*Fr(X_+\wedge E^c(n)\wedge\gmp))\\
       \xrightarrow u\uhom(\bb G_m^{\wedge n+1},C_*Fr(X_+\wedge E^c(n+1))).
   \end{gather*}

\begin{dfn}\label{cc_M_fr_E}
Set $\cc M_{fr}^{E}:=\colim(\bb GC_*Fr^{E}[0]\to\bb GC_*Fr^{E}[1]\to\cdots)$.
By construction, $ev_{\bb G_m}(\cc M_{fr}^{E})=\colim_n ev_{\bb G_m}(\bb GC_*Fr^{E}[n])$.
Each $\bb GC_*Fr^{E}[n]$ is a framed spectral functor, and hence so is
$\cc M_{fr}^{E}$.
\end{dfn}

Stabilization maps $ev_{\bb G_m}(\bb GC_*Fr^{E}[n])\to ev_{\bb G_m}(\bb GC_*Fr^{E}[n+1])$ are given by the compositions
   \begin{gather*}
       \uhom(\gmpn,C_*Fr(\bb G_m^{\wedge i}\wedge E^c(n)))\to
       \uhom(\gmpn\wedge\gmp,C_*Fr(\bb G_m^{\wedge i}\wedge E^c(n)\wedge\gmp))\\
       \xrightarrow{u}\uhom(\bb G_m^{\wedge n+1},C_*Fr(\bb G_m^{\wedge i}\wedge E^c(n+1))).
    \end{gather*}

For any $n\geqslant 0$, construct
a map of bispectra $f_n\colon ev_{\bb G_m}(\bb GC_*Fr^{E}[n])\to R^n_{\gmp}C_*Fr(E^c)$
as the composition at each level $i\geqslant 0$
   \begin{multline*}
    f_{n,i}\colon \uhom(\gmpn,C_*Fr(\bb G_m^{\wedge i}\wedge E^c(n)))\xrightarrow{tw}
     \uhom(\gmpn,C_*Fr(E^c(n)\wedge\bb G_m^{\wedge i}))\xrightarrow{u}\\
     \to\uhom(\gmpn,C_*Fr(E^c(n+i))).
    \end{multline*}

\begin{lem}\label{FrEr}
Each map $f_n$, $n\geqslant 0$, is a morphism of bispectra commuting
with stabilization maps $ev_{\bb G_m}(\bb GC_*Fr^{E}[n])\to ev_{\bb G_m}(\bb GC_*Fr^{E}[n+1])$ and
$R^n_{\gmp}C_*Fr(E^c)\to R^{n+1}_{\gmp}C_*Fr(E^c)$. In particular, they
induce a map of bispectra
   $$f\colon ev_{\bb G_m}(\cc M_{fr}^{E})\to R^\infty_{\gmp}C_*Fr(E^c).$$
\end{lem}

\begin{proof}
The following diagram commutes:\footnotesize
\[\xymatrixcolsep{0.075in}
\xymatrix{
\uhom(\gmpn,C_*Fr(\bb G_m^{\wedge i}\wedge E^c(n)))\wedge\gmp\ar[r]^{tw}\ar[d] &
\uhom(\gmpn,C_*Fr(E^c(n)\wedge\bb G_m^{\wedge i}))\wedge\gmp\ar[r]^(0.52){u}\ar[d] & \uhom(\gmpn,C_*Fr(E(n+i)))\wedge\gmp\ar[d]\\
\uhom(\gmpn,C_*Fr(\bb G_m^{\wedge i+1}\wedge E^c(n)))\ar[r]^{tw} &\uhom(\gmpn,C_*Fr(E^c(n)\wedge\bb G_m^{\wedge i+1}))
\ar[r]^{u} &\uhom(\gmpn,C_*Fr(E({n+i+1}))),}\]
\normalsize
where the left vertical arrow is the $i$th bonding map of
$ev_{\bb G_m}(\bb GC_*Fr^{E}[n])$, and the right vertical map is the $i$th
bonding map of $R^n_{\gmp}C_*Fr(E^c)$. We see that each map $f_n$ is a
morphism of bispectra. Consider a commutative diagram\footnotesize
\[\xymatrixcolsep{0.08in}
\xymatrix{
\uhom(\gmpn,C_*Fr(\bb G_m^{\wedge i}\wedge E^c(n)))\ar[r]^{tw}\ar[d] &\uhom(\gmpn,C_*Fr(E^c(n)\wedge\bb G_m^{\wedge i}))\ar[r]^{u}\ar[d]
& \uhom(\gmpn,C_*Fr(E(n+i)))\ar[d]\\
\uhom(\bb G_m^{\wedge n+1},C_*Fr(\bb G_m^{\wedge i}\wedge E^c(n+1)))\ar[r]^{tw} &
\uhom(\bb G_m^{\wedge n+1},C_*Fr(E^c(n+1)\wedge\bb G_m^{\wedge i}))\ar[r]^(.52){u} &\uhom(\bb G_m^{\wedge n+1},C_*Fr(E(n+1+i))), }\]
\normalsize
in which the left vertical map is the stabilization
$ev_{\bb G_m}(\bb GC_*Fr^{E}[n])(i)\to ev_{\bb G_m}(\bb GC_*Fr^{E}[n+1])(i)$
and the right vertical map is the stabilization map
$R^n_{\gmp}C_*Fr(E^c)(i)\to R^{n+1}_{\gmp}C_*Fr(E^c)(i)$. The middle vertical arrow equals
the composite map
   \begin{gather*}
     \uhom(\gmpn,C_*Fr(E^c(n)\wedge\bb G_m^{\wedge i}))\to\uhom(\bb G_m^{\wedge n+1},C_*Fr(E^c(n)\wedge\bb G_m^{\wedge i+1}))\\
     \xrightarrow{(\chi_{i,1})_*}\uhom(\bb G_m^{\wedge n+1},C_*Fr(E^c(n)\wedge\bb G_m^{\wedge 1+i}))\xrightarrow{u}
     \uhom(\bb G_m^{\wedge n+1},C_*Fr(E^c(n+1)\wedge \bb G_m^{\wedge i})),
   \end{gather*}
where $(\chi_{i,1})_*$ is induced by the shuffle map
$\chi_{i,1}:\bb G_m^{\wedge i+1}\to\bb G_m^{\wedge 1+i}$. For commutativity of the right
square we also use the fact that the diagram
\[\xymatrix{E(n)\wedge\bb G_m^{\wedge i+1}\ar[r]^{u}\ar[d]_{\id\wedge\chi_{i,1}}&E_{n+i}\wedge\gmp\ar[r]^{u}&E(n+i+1)\ar[d]^{1\oplus\chi_{i,1}}\\
E(n)\wedge \bb G_m^{\wedge 1+i}\ar[r]^{u}&E(n+1)\wedge\bb G_m^{\wedge i}\ar[r]^{u}&E(n+1+i)}\]
is commutative, because the compositions of horizontal maps are
$\Sigma_n\times\Sigma_{i+1}$-equivariant maps. Thus the maps
$f_{n,i}$ are compatible with stabilization.
\end{proof}

\begin{lem}\label{frrr}
The map $f$ induces a stable local equivalence of $S^1$-spectra for any $i\geqslant 0$:
\[f_i\colon ev_{\bb G_m}(\cc M_{fr}^{E})(i)\to R^\infty_{\gmp}C_*Fr(E^c)(i).\]
\end{lem}

\begin{proof}
The map $f_{n,i}\colon \uhom(\gmpn,C_*Fr(\bb G_m^{\wedge i}\wedge E^c(n)))
\to\uhom(\gmpn,C_*Fr(E^c(n+i)))$ fits into the following commutative diagram
\[\xymatrix{
\uhom(\gmpn,C_*Fr(\bb G_m^{\wedge i}\wedge E^c(n)))\ar@{=}[d]\ar[rr]^{f_{n,i}} && \uhom(\gmpn,C_*Fr(E^c(n+i)))\\
\Theta^{n}_{\gmp}(C_*Fr(\bb G_m^{\wedge i}\wedge E^c))(0)\ar[r]^{\upsilon} &
\Theta^{n}_{\gmp}(C_*Fr(E^c[i]))(0)\ar@{=}[r]&\Theta^{n}_{\gmp}(C_*Fr(E^c))(i)\ar[u]^{\chi_{i,n}}}\]
where $\chi_{i,n}\in\Sigma_{i+n}$ is the shuffle map permuting the first $i$
elements with the last $n$ elements and
$\Theta^{n}_{\gmp}(C_*Fr(E^c))$ is the bispectrum
   $$\Theta^{n}_{\gmp}(C_*Fr(E^c))(i)=\uhom(\gmpn,C_*Fr(E^c(i+n)))$$
with bonding maps being the compositions
   \begin{multline*}
    \uhom(\gmpn,C_*Fr(E^c(i+n)))\to\uhom(\bb G_m^{\wedge n+1},C_*Fr(E^c(i+n)\wedge\gmp))\to\\
       \xrightarrow{u}\uhom(\bb G_m^{\wedge n+1},C_*Fr(E^c(i+n+1))).
    \end{multline*}
Note that the maps of the diagram are compatible with stabilization maps, and
hence we can pass to the colimit over $n$ arriving at the commutative diagram
\[\xymatrix{
ev_{\bb G_m}(\cc M_{fr}^{E})(i)\ar@{=}[d]\ar[rr]^{f_{i}} && R^\infty_{\gmp}C_*Fr(E^c)(i)\\
\Theta^{\infty}_{\gmp}(C_*Fr(\bb G_m^{\wedge i}\wedge E^c))(0)\ar[r]^{\upsilon} &
\Theta^{\infty}_{\gmp}(C_*Fr(E^c[i]))(0)\ar@{=}[r] &\Theta^{\infty}_{\gmp}(C_*Fr(E^c))(i)\ar[u]^{\cong}}\]

We see that $f_i$ is a stable local equivalence of $S^1$-spectra if and only
if $\upsilon$ is. The latter is the case by Lemmas~\ref{upsilon} and~\ref{abtheta}.
\end{proof}

By Lemma \ref{frrr} the arrow $f: ev_{\bb G_m}(\cc M_{fr}^E)\to R^\infty_{\gmp}C_*Fr(E^c)$
is a levelwise {stable} local equivalence. We have constructed
the desired framed spectral functor
$\cc M_{fr}^E$ and a zigzag of stable motivic equivalences
  $$E^c\to R^\infty_{\gmp}C_*Fr(E^c)\xleftarrow{} ev_{\bb G_m}(\cc M_{fr}^E).$$
Thus $E$ is isomorphic to $ev_{\bb G_m}(\cc M_{fr}^E)$ in
$SH(k)$. This isomorphism is plainly functorial in $E$.

We claim that $\cc M_{fr}^E$ is a framed spectral functor. Clearly, all
simplicial enriched functors forming the spectral functor $\cc M_{fr}^E$ are framed
in the sense of Definition~\ref{frsp}.
Given $U\in Sm/k$ let
$\cc M_{fr}^{E}(-\times U)$ denote the spectral functor
   $$X\in\cc Fr_0(k)\mapsto\colim_n\uhom(\gmpn,C_*Fr(X_+\wedge U_+\wedge E^c(n))).$$

We already know that
$ev_{\bb G_m}(\cc M_{fr}^E(-\times U))\in SH^{fr}_{\nis}(k)$. The
above proof shows that
   $$U_+\wedge E^c\to R^\infty_{\gmp}C_*Fr(U_+\wedge E^c)\xleftarrow{} ev_{\bb G_m}(\cc M_{fr}^E(-\times U))$$
is a zigzag of stable motivic equivalences, functorial in $U$ and $E$.

Given an elementary Nisnevich square
   $$\xymatrix{U'\ar[d]\ar[r]&Y'\ar[d]\\
                       U\ar[r]&Y}$$
the square
   $$\xymatrix{U'_+\wedge E^c\ar[d]\ar[r]&Y'_+\wedge E^c\ar[d]\\
                       U_+\wedge E^c\ar[r]&Y_+\wedge E^c}$$
is homotopy cartesian in the stable motivic model structure of bispectra, and hence so is the square of motivically
fibrant bispectra
   $$\xymatrix{ev_{\bb G_m}(\cc M_{fr}^E(-\times U'))_f\ar[d]\ar[r]&ev_{\bb G_m}(\cc M_{fr}^E(-\times Y'))_f\ar[d]\\
                       ev_{\bb G_m}(\cc M_{fr}^E(-\times U))_f\ar[r]&ev_{\bb G_m}(\cc M_{fr}^E(-\times Y))_f}$$
(we use here Lemma~\ref{lemst}). It follows that the square of $S^1$-spectra
   $$\xymatrix{ev_{\bb G_m}(\cc M_{fr}^E(-\times U'))_f(0)\ar[d]\ar[r]&ev_{\bb G_m}(\cc M_{fr}^E(-\times Y'))_f(0)\ar[d]\\
                       ev_{\bb G_m}(\cc M_{fr}^E(-\times U))_f(0)\ar[r]&ev_{\bb G_m}(\cc M_{fr}^E(-\times Y))_f(0)}$$
is sectionwise homotopy cartesian in the stable model structure of ordinary $S^1$-spectra.
Thus $\cc M_{fr}^E$ is Nisnevich excisive (obviously, $\cc M_{fr}^E(\emptyset)=*$).

For the same reasons $pr:\cc M_{fr}^E(\bb A^1\times U)\to\cc M_{fr}^E(U)$ is a stable local
equivalence of $S^1$-spectra, and hence the claim.

So the functor $F\circ ev_{\bb G_m}:SH_{S^1}^{fr}[\Fr]\to SH(k)$ is an equivalence of categories. The compactly generated
triangulated category structure on $SH_{S^1}^{fr}[\Fr]$ is just the preimage of the structure on $SH(k)$.
Clearly, $\{\Omega_{\gmpn}\cc M_{fr}(X)\mid X\in Sm/k,n\geq 0\}$ are compact generators of
$SH_{S^1}^{fr}[\Fr]$. Since every $\cc X\in SH_{S^1}^{fr}[\Fr]$
is Nisnevich excisive, it follows that the canonical map of $S^1$-spectra $\cc X(U\sqcup V)\to\cc X(U)\times\cc X(V)$
is a stable equivalence for all $U,V\in Sm/k$. Therefore the canonical map of spectral functors
$\cc X\wedge S^1\to\cc X(-\otimes S^1)$ is pointwise a stable equivalence. It follows that the triangulated shift
$\cc X[1]$ is computed as $\cc X(-\otimes S^1)$ in $SH_{S^1}^{fr}[\Fr]$. This completes the proof of the theorem.
\end{proof}

\section{The triangulated category of framed motives}

\begin{dfn}
(1) We say that a framed spectral functor $\cc X\in SH_{S^1}^{fr}[\Fr]$ is {\it effective\/}
if for all $n>0$, $U\in Sm/k$ and a finitely generated field $K/k$
the ordinary $S^1$-spectrum $\cc X(\gmpn\times U)(\wh{\Delta}^\bullet_K)$
is stably trivial. In other words, $\cc X\in SH_{S^1}^{fr}[\Fr]$ is effective if and only if
its evaluation bispectrum $ev_{\bb G_m}(\cc X(-\times U))=(\cc X(U),\cc X(\gmp\times U),\ldots)$
is effective in the sense of Definition~\ref{dfneff}.

(2) The full subcategory of $SH_{S^1}^{fr}[\Fr]$ consisting of the effective spectral
functors is called the {\it category of framed motives\/} and denoted by
${\cc {SH}}^{fr}(k)$.
\end{dfn}

\begin{thm}\label{spfuneff}
The category of framed motives ${\cc {SH}}^{fr}(k)$ is
compactly generated triangulated with compact generators
$\{\cc M_{fr}(X)\mid X\in Sm/k\}$ and the shift functor $\cc X[1]=\cc X(-\otimes S^1)$.
Moreover, restriction of the composite functor
$F\circ ev_{\bb G_m}:SH_{S^1}^{fr}[\Fr]\to SH(k)$
to ${\cc {SH}}^{fr}(k)$ lands in $SH^{eff}(k)$ and
is an equivalence of triangulated categories $G:{\cc {SH}}^{fr}(k)\lra{\sim}SH^{eff}(k)$, where
$F:SH_{\nis}^{fr}(k)\to SH(k)$ is the triangulated equivalence of
Theorem~\ref{recover}.
\end{thm}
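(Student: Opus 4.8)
The plan is to run the same argument as for Theorem~\ref{spfuncat}, but restricted to effective objects, using Theorem~\ref{thmeff} to translate between the two notions of effectivity. First, observe that Theorem~\ref{spfuncat} already establishes that $F\circ ev_{\bb G_m}:SH_{S^1}^{fr}[\Fr]\to SH(k)$ is a triangulated equivalence and that morphisms in $SH_{S^1}^{fr}[\Fr]$ are computed as morphisms of evaluation bispectra in $SH_{\nis}^{fr}(k)$, equivalently in $SH(k)$. By definition, $\cc X\in SH_{S^1}^{fr}[\Fr]$ is effective if and only if $ev_{\bb G_m}(\cc X(-\times U))$ is effective in the sense of Definition~\ref{dfneff} for every $U\in Sm/k$. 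By Theorem~\ref{thmeff} this is equivalent to $ev_{\bb G_m}(\cc X(-\times U))\in SH^{eff}(k)$ for every $U$; taking $U=pt$ and using that $SH^{eff}(k)$ is a triangulated subcategory closed under the relevant operations, one reduces to the condition that $F(ev_{\bb G_m}(\cc X))\in SH^{eff}(k)$. Hence $G:=F\circ ev_{\bb G_m}$ carries ${\cc {SH}}^{fr}(k)$ into $SH^{eff}(k)$, and it is automatically fully faithful since $F\circ ev_{\bb G_m}$ is.

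For essential surjectivity, let $E\in SH^{eff}(k)$. Applying the construction of Theorem~\ref{spfuncat} to $E$ (realized as a motivically fibrant symmetric bispectrum with cofibrant replacement $E^c$) produces a framed spectral functor $\cc M_{fr}^E$ together with a zigzag of stable motivic equivalences $E^c\to R^\infty_{\gmp}C_*Fr(E^c)\leftarrow ev_{\bb G_m}(\cc M_{fr}^E)$, functorial in $E$; thus $G(\cc M_{fr}^E)\cong E$ in $SH(k)$. It remains to check that $\cc M_{fr}^E$ is effective, i.e. lies in ${\cc {SH}}^{fr}(k)$. By Corollary~\ref{freffcor} (or directly Theorem~\ref{thmeff}) the framed bispectrum $\cc M_{fr}^b(E)$ is effective in the sense of Definition~\ref{dfneff} precisely because $E\in SH^{eff}(k)$; since $ev_{\bb G_m}(\cc M_{fr}^E)\cong\cc M_{fr}^b(E)$ in $SH^{fr}_{\nis}(k)$ (both are isomorphic to $E$, and $F$ is an equivalence), $ev_{\bb G_m}(\cc M_{fr}^E)$ is effective. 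The same argument applied to $U_+\wedge E^c$ in place of $E^c$, using the functoriality of the zigzag in $U$ noted in the proof of Theorem~\ref{spfuncat}, gives effectivity of $ev_{\bb G_m}(\cc M_{fr}^E(-\times U))$ for every $U\in Sm/k$. Hence $\cc M_{fr}^E\in{\cc {SH}}^{fr}(k)$ and $G$ is essentially surjective, so $G$ is a triangulated equivalence.

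It remains to identify the compact generators and the shift. Since ${\cc {SH}}^{fr}(k)$ is equivalent to $SH^{eff}(k)$, which is compactly generated by $\{\Sigma^\infty_{S^1}X_+\mid X\in Sm/k\}$ (the images of $\Sigma^\infty_{\gmp}\Sigma^\infty_{S^1}X_+$ under $\Omega^\infty_{\gmp}$, or equivalently the compact generators of the effective category in the sense of~\cite{Lev}), it suffices to check that $\cc M_{fr}(X)$ maps under $G$ to an object isomorphic to $\Sigma^\infty_{S^1}X_+$ up to the equivalence; indeed $ev_{\bb G_m}(\cc M_{fr}(X))=M_{fr}^{\bb G}(X)$, which by~\cite[11.1]{GP1} is isomorphic to $\Sigma^\infty_{S^1}\Sigma^\infty_{\gmp}X_+$ in $SH(k)$, and this is effective with $\Omega^\infty_{\gmp}$ recovering $\Sigma^\infty_{S^1}X_+$. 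Thus $\{\cc M_{fr}(X)\mid X\in Sm/k\}$ is a set of compact generators of ${\cc {SH}}^{fr}(k)$. Finally, the shift functor is computed as $\cc X\mapsto\cc X(-\otimes S^1)$ by exactly the argument at the end of the proof of Theorem~\ref{spfuncat}: every $\cc X\in{\cc {SH}}^{fr}(k)$ is Nisnevich excisive, so $\cc X\wedge S^1\to\cc X(-\otimes S^1)$ is a pointwise stable equivalence and hence an isomorphism in ${\cc {SH}}^{fr}(k)$.

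The main obstacle is the reduction, in the essential-surjectivity step, from ``$ev_{\bb G_m}(\cc X(-\times U))$ effective for all $U$'' to the single condition on $E\in SH^{eff}(k)$, i.e. making sure the effectivity of $\cc M_{fr}^E$ as a spectral functor (which quantifies over all $U$) genuinely follows and is not merely the effectivity of its underlying bispectrum; this is handled by the functoriality in $U$ of the zigzag produced in the proof of Theorem~\ref{spfuncat} together with Theorem~\ref{thmeff}, but it is the point that requires care.
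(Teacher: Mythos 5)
Your proposal is correct and follows essentially the same route as the paper: restrict the equivalence $F\circ ev_{\bb G_m}$ of Theorem~\ref{spfuncat} using Theorem~\ref{thmeff} to translate between the two notions of effectivity, and reuse the framed spectral functor $\cc M_{fr}^E$ from that proof for essential surjectivity. You are in fact slightly more careful than the paper at the one delicate point (checking effectivity of $ev_{\bb G_m}(\cc M_{fr}^E(-\times U))$ for \emph{all} $U$, not just $U=pt$, via the functoriality of the zigzag in $U$), and your identification of the compact generators is sound, modulo the harmless slip of writing $\Sigma^\infty_{S^1}X_+$ where the generators of $SH^{eff}(k)$ are the bispectra $\Sigma^\infty_{\gmp}\Sigma^\infty_{S^1}X_+$.
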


\begin{proof}
Given $\cc X\in{\cc {SH}}^{fr}(k)$, the evaluation bispectrum $ev_{\bb G_m}(\cc X)$ is
effective in the sense of Definition~\ref{dfneff}. By Theorem~\ref{thmeff} $ev_{\bb G_m}(\cc X)$
belongs to $SH^{eff}(k)$. It follows that restriction of the composite functor
$F\circ ev_{\bb G_m}:SH_{S^1}^{fr}[\Fr]\to SH(k)$
to ${\cc {SH}}^{fr}(k)$ lands in $SH^{eff}(k)$. We arrive therefore at a functor
$G:{\cc {SH}}^{fr}(k)\to SH^{eff}(k)$, which is fully faithful by construction.
Let $E\in SH^{eff}(k)$ be a symmetric fibrant bispectrum. By the proof of Theorem~\ref{spfuncat}
there is a framed spectral functor $\cc M^E_{fr}$ such that $E$ is isomorphic to
$ev_{\bb G}(\cc M^E_{fr})$ in $SH(k)$. Theorem~\ref{thmeff} implies $\cc M^E_{fr}$ is effective.
Since every bispectrum is isomorphic in $SH(k)$ to an associated symmetric fibrant
bispectrum, we see that $G$ is an equivalence of categories.

Next, the compactly generated triangulated category structure on ${\cc {SH}}^{fr}(k)$
is inherited from $SH_{\nis}^{fr}(k)$. It is also the preimage of the same structure on $SH^{eff}(k)$.
The rest now follows from Theorem~\ref{spfuncat}.
\end{proof}

In order to formulate the next theorem, we need to define several functors first. Let
   $$H:SH^{eff}(k)\to{\cc {SH}}^{fr}(k)$$
be the functor that takes a bispectrum $E$ to the spectral framed functor
$\cc M_{fr}^{E_{f}}$, where $E_{f}$ is the symmetric fibrant bispectrum
associated to $E$ (we can always do this functorially on the level of bispectra).
The proof of the preceding theorem shows that $\cc M_{fr}^{E_{f}}$ is effective
if and only if $E$ is and that $H$ is quasi-inverse to $G:{\cc {SH}}^{fr}(k)\to SH^{fr}(k)$.

Next, define a triangulated functor
   $$\cc M_{fr}:SH_{S^1}(k)\to{\cc {SH}}^{fr}(k)$$
as follows. It takes an $S^1$-spectrum $A$ to the spectral functor
   $$X\in\Fr\mapsto C_*Fr(X_+\wedge A^c),$$
where $A^c$ is the (functorial) cofibrant replacement of $A$ in the stable
projective model structure of $S^1$-spectra. Note that $A^c$ is levelwise a
sequential colimit of simplicial smooth schemes. It follows from Lemma~\ref{yyy}
that $\cc M_{fr}(A)$ is a framed spectral functor. Given a stable motivic equivalence
of $S^1$-spectra $f:A\to B$, consider a commutative diagram of bispectra
   $$\xymatrix{\Sigma^\infty_{\gmp}A^c\ar[r]\ar[d]_f&C_*Fr(\Sigma^\infty_{\gmp}A^c)\ar[r]^(.5)\cong\ar[d]_f&ev_{\bb G}(\cc M_{fr}(A))\ar[d]^{f}\\
                         \Sigma^\infty_{\gmp}B^c\ar[r]&C_*Fr(\Sigma^\infty_{\gmp}B^c)\ar[r]^(.5)\cong&ev_{\bb G}(\cc M_{fr}(B)).}$$
Here the right horizontal arrows are isomorphisms given by swappings $\gmpn\wedge-\cong-\wedge\gmpn$.
By Theorem~\ref{Thm_A_C_*Fr_A} the left horizontal maps are isomorphisms in
$SH(k)$, and hence so is the right vertical map. So $\cc M_{fr}:SH_{S^1}(k)\to{\cc {SH}}^{fr}(k)$ is indeed a functor.
Clearly, it is triangulated.

Finally using Lemma~\ref{lemst}, we define a triangulated functor
   $$U:{\cc {SH}}^{fr}(k)\to SH_{S^1}(k)$$
(``$U$'' for underlying) taking $\cc X\in{\cc {SH}}^{fr}(k)$ to the stable local replacement $\cc X(pt)_f$
of the underlying $S^1$-spectrum $\cc X(pt)$.

We now document the above constructions as follows.

\begin{thm}\label{ewq}
In the diagram of triangulated functors
   $$\xymatrix{SH_{S^1}(k)\ar@/^/[rr]^{\Sigma^\infty_{\gmp}}\ar@/^/[dr]^(.6){\cc M_{fr}}&&SH^{eff}(k)\ar@/^/[ll]^(.45){\Omega^\infty_{\gmp}}\ar@/_/[dl]_(.55)H\\
                        &{\cc {SH}}^{fr}(k)\ar@/_/[ur]_G\ar@/^/[ul]^U}$$
all three pairs of functors are adjoint pairs, $G\circ\cc M_{fr}$ is equivalent to $\Sigma^\infty_{\gmp}$,
$U\circ H$ is equivalent to $\Omega^\infty_{\gmp}$.
\end{thm}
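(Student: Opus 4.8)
The plan is to reduce the whole statement to the two displayed compatibilities $G\circ\cc M_{fr}\simeq\Sigma^\infty_{\gmp}$ and $U\circ H\simeq\Omega^\infty_{\gmp}$, after which all three adjunctions follow by a purely formal argument. Two of the adjoint pairs are essentially already at hand: $(\Sigma^\infty_{\gmp},\Omega^\infty_{\gmp})$ is adjoint by the very construction of $SH^{eff}(k)$ as the $\gmp$-stabilization of $SH_{S^1}(k)$, and by Theorem~\ref{spfuneff} the functor $G$ is a triangulated equivalence with quasi-inverse $H$, so (adjusting the structure isomorphisms if necessary) $(G,H)$ is an adjoint equivalence, in particular an adjoint pair with $H$ left adjoint to $G$. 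Granting the two natural equivalences, one then argues: since $H\circ G\simeq\id$ one gets $\cc M_{fr}\simeq H\circ G\circ\cc M_{fr}\simeq H\circ\Sigma^\infty_{\gmp}$ and $U\simeq U\circ H\circ G\simeq\Omega^\infty_{\gmp}\circ G$; as $H\dashv G$ and $\Sigma^\infty_{\gmp}\dashv\Omega^\infty_{\gmp}$, the composite $H\circ\Sigma^\infty_{\gmp}$ is left adjoint to $\Omega^\infty_{\gmp}\circ G$, so $\cc M_{fr}\dashv U$, which is the third pair.

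For $G\circ\cc M_{fr}\simeq\Sigma^\infty_{\gmp}$ I would simply unwind the constructions recalled immediately before the statement. Recall that $G=F\circ ev_{\bb G}$ and that, for $A\in SH_{S^1}(k)$, the commutative diagram displayed just above this theorem identifies $ev_{\bb G}(\cc M_{fr}(A))$ with $C_*Fr(\Sigma^\infty_{\gmp}A^c)$ via the swappings $\gmpn\wedge-\cong-\wedge\gmpn$. By Theorem~\ref{Thm_A_C_*Fr_A} the arrow $\zeta_{\Sigma^\infty_{\gmp}A^c}\colon\Sigma^\infty_{\gmp}A^c\to C_*Fr(\Sigma^\infty_{\gmp}A^c)$ is an isomorphism in $SH(k)$, and $\Sigma^\infty_{\gmp}A^c\to\Sigma^\infty_{\gmp}A$ is one as well since $A^c\to A$ is a stable local equivalence of $S^1$-spectra; composing, one obtains a natural isomorphism $ev_{\bb G}(\cc M_{fr}(A))\cong\Sigma^\infty_{\gmp}A$ in $SH(k)$ built out of morphisms of bispectra. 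Because $F$ is the identity on objects, applying $F$ produces the desired natural isomorphism $G\circ\cc M_{fr}\simeq\Sigma^\infty_{\gmp}$ with values in $SH^{eff}(k)$; this also re-confirms that $\cc M_{fr}$ indeed takes values in ${\cc {SH}}^{fr}(k)$, since $\Sigma^\infty_{\gmp}$ of any $S^1$-spectrum is effective.

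The main obstacle is the identification $U\circ H\simeq\Omega^\infty_{\gmp}$, which amounts to matching two a priori different fibrant replacements. Here I would proceed as follows. For $E\in SH^{eff}(k)$ let $E_f$ be the associated symmetric fibrant bispectrum, so $E_f$ is motivically fibrant and $E\cong E_f$ in $SH(k)$; by definition $H(E)=\cc M_{fr}^{E_f}$, and by Theorems~\ref{spfuncat} and~\ref{thmeff} this is an effective framed spectral functor with $ev_{\bb G}(\cc M_{fr}^{E_f})\in SH_{\nis}^{fr}(k)$ and $ev_{\bb G}(\cc M_{fr}^{E_f})\cong E_f$ in $SH^{eff}(k)$. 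The crucial ingredient is Lemma~\ref{lemst}: a motivically fibrant replacement of the framed bispectrum $ev_{\bb G}(\cc M_{fr}^{E_f})$ may be computed levelwise by stable local fibrant replacement of the weighted $S^1$-spectra, so its zeroth weight after this replacement is precisely the stable local fibrant replacement $\cc M_{fr}^{E_f}(pt)_f$ of the underlying $S^1$-spectrum, which by definition is the value $U(H(E))$. Since $\Omega^\infty_{\gmp}$ sends a motivically fibrant bispectrum to its zeroth weight, this gives $\Omega^\infty_{\gmp}(E)\cong\Omega^\infty_{\gmp}(E_f)\cong\Omega^\infty_{\gmp}(ev_{\bb G}(\cc M_{fr}^{E_f}))\cong\cc M_{fr}^{E_f}(pt)_f=U(H(E))$, and one checks that each arrow is natural in $E$, using functoriality of $E\mapsto E_f$, of the construction $\cc M_{fr}^{(-)}$, of $ev_{\bb G}$, of stable local fibrant replacement, and of $\Omega^\infty_{\gmp}$. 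Combined with the formal argument of the first paragraph, this establishes the two stated equivalences and shows that all three pairs are adjoint.
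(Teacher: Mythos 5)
Your argument is correct, and it is worth pointing out that the paper itself offers no proof of Theorem~\ref{ewq} at all: the theorem is introduced with ``we now document the above constructions as follows'' and is immediately followed by the next corollary, so there is nothing to compare your proof against line by line. What you supply is the natural argument the authors evidently had in mind, and you organize it well: the reduction of all three adjunctions to the two displayed equivalences via $H\dashv G$ (legitimate, since a quasi-inverse pair can always be rigged into an adjoint equivalence) and the composition of adjoints $H\circ\Sigma^\infty_{\gmp}\dashv\Omega^\infty_{\gmp}\circ G$ is formally sound. For $G\circ\cc M_{fr}\simeq\Sigma^\infty_{\gmp}$ you correctly read off the zigzag $\Sigma^\infty_{\gmp}A^c\to C_*Fr(\Sigma^\infty_{\gmp}A^c)\cong ev_{\bb G}(\cc M_{fr}(A))$ from the commutative diagram the authors display just before the theorem, with Theorem~\ref{Thm_A_C_*Fr_A} giving the isomorphism in $SH(k)$; your remark that this, together with Theorem~\ref{thmeff}, is what actually certifies that $\cc M_{fr}$ lands in $\cc{SH}^{fr}(k)$ is a detail the paper glosses over. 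For $U\circ H\simeq\Omega^\infty_{\gmp}$, Lemma~\ref{lemst} is indeed the crucial input: it lets you compute the motivically fibrant replacement of $ev_{\bb G}(\cc M_{fr}^{E_f})$ weightwise, so that its zeroth weight is exactly $\cc M_{fr}^{E_f}(pt)_f=U(H(E))$, while $ev_{\bb G}(\cc M_{fr}^{E_f})\cong E$ in $SH(k)$ by the proof of Theorem~\ref{spfuncat}. I see no gap; the only places where one must be slightly careful --- naturality of the zigzags in $E$ and $A$, and the fact that the restricted adjunction $\Sigma^\infty_{\gmp}\dashv\Omega^\infty_{\gmp}$ persists on the full subcategory $SH^{eff}(k)$ --- are exactly the ones you flag.
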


We finish the paper with a $\gmp$-connectedness result for
effective motivic framed spectral functors. Recall that for every $n\geq 0$
one defines the category $f_n(SH_{S^1}(k))$ as a full triangulated subcategory of
$SH_{S^1}(k)$ compactly generated by the suspension spectra $\Sigma^\infty_{S^1}X_+\wedge\gmpn$,
$X\in Sm/k$. We say that a $S^1$-spectrum $A\in SH_{S^1}(k)$ is {\it $n$-connected
in the $\gmp$-direction}, $n\geq 0$, if $A\in f_n(SH_{S^1}(k))$.

\begin{cor}\label{gmpconn}
For any effective motivic framed spectral functor $\cc X\in{\cc {SH}}^{fr}(k)$ and any $n\geq 0$,
the $S^1$-spectrum $\cc X(\gmpn)$ is $n$-connected in the $\gmp$-direction.
\end{cor}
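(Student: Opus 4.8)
The plan is to deduce the statement from the slice--filtration machinery already set up in the paper, together with the compatibility of $\Omega^\infty_{\gmp}$ with slices due to Levine and Bachmann--Fasel. Write $\cc E:=ev_{\bb G_m}(\cc X)$. By the definition of an effective framed spectral functor the bispectrum $\cc E$ is effective in the sense of Definition~\ref{dfneff}, hence $\cc E\in SH^{eff}(k)$ by Theorem~\ref{thmeff}; in particular all negative slices $s_m(\cc E)$, $m<0$, vanish in $SH(k)$. The first thing I would do is pin down $\cc X(\gmpn)$ as an object of the $\bb A^1$-local category $SH_{S^1}(k)$: by Lemma~\ref{lemst} the levelwise stable local fibrant replacement $\cc E_f=(\cc E(0)_f,\cc E(1)_f,\ldots)$ is motivically fibrant, hence an $\Omega$-bispectrum; thus $\gmpn\wedge\cc E\cong(\cc E(n)_f,\cc E(n+1)_f,\ldots)$ in $SH(k)$ (as in the proof of Theorem~\ref{thmeff}), and therefore $\cc X(\gmpn)\cong\cc E(n)\cong\cc E(n)_f\cong\Omega^\infty_{\gmp}(\gmpn\wedge\cc E)$ in $SH_{S^1}(k)$, where $\cc E(n)\cong\cc E(n)_f$ in $SH_{S^1}(k)$ because $\cc E(n)$ is $\bb A^1$-local by Definition~\ref{deffrsp}(3) and Lemma~\ref{lemprem}.

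Next I would reduce the claim to a slice vanishing. Since the compact generators $\Sigma^\infty_{S^1}X_+$ of $SH_{S^1}(k)$ already lie in $f_0(SH_{S^1}(k))$, one has $f_0(SH_{S^1}(k))=SH_{S^1}(k)$, so $\cc X(\gmpn)\in f_0(SH_{S^1}(k))$ for free; and if $s_j(\cc X(\gmpn))=0$ for $0\leq j\leq n-1$, then in the colocalization tower $f_n(\cc X(\gmpn))\to\cdots\to f_0(\cc X(\gmpn))=\cc X(\gmpn)$ each map $f_{j+1}\to f_j$ with $0\leq j\leq n-1$ is an isomorphism, whence $\cc X(\gmpn)\cong f_n(\cc X(\gmpn))\in f_n(SH_{S^1}(k))$, which is the assertion. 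To see that these slices vanish I would chain three inputs: the slice version of the compatibility recalled in the proof of Theorem~\ref{thmeff}, namely $s_j(\Omega^\infty_{\gmp}(-))=\Omega^\infty_{\gmp}(s_j(-))$, valid for all $j$ by~\cite[9.0.3]{Lev} (see also~\cite[4.1]{BF}); the $\gmp$-periodicity of slices in $SH(k)$, giving $s_j(\gmpn\wedge\cc E)=\gmpn\wedge s_{j-n}(\cc E)$; and the vanishing of $s_{j-n}(\cc E)$ for $j<n$ since $\cc E$ is effective. Combining, $s_j(\cc X(\gmpn))=\Omega^\infty_{\gmp}(\gmpn\wedge s_{j-n}(\cc E))=0$ for $0\leq j\leq n-1$, completing the proof.

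The step I expect to require the most care is the first one: one must make sure that ``$\cc X(\gmpn)$'' is read as the object $\cc E(n)_f=\Omega^\infty_{\gmp}(\gmpn\wedge\cc E)$ of the $\bb A^1$-local category $SH_{S^1}(k)$ --- where $f_n(SH_{S^1}(k))$ lives and where the slice filtration is defined --- and not merely as the a priori only Nisnevich-local $S^1$-spectrum $\cc E(n)$; this is exactly where the $\bb A^1$-invariance hypothesis of Definition~\ref{deffrsp}(3) enters, through Lemmas~\ref{lemprem} and~\ref{lemst}. One should also take care to invoke the slice-compatibility of $\Omega^\infty_{\gmp}$ for every $s_j$ with $0\leq j<n$, and not only in the $s_0$-case quoted explicitly in the proof of Theorem~\ref{thmeff}; this full form is a standard consequence of Levine's results. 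The remaining steps are a routine assembly of facts already in hand.
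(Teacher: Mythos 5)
Your argument is correct, and it arrives at the same essential reduction as the paper, but by a different decomposition. The paper's proof is a two-line citation: Theorem~\ref{ewq} identifies $U\circ H$ with $\Omega^\infty_{\gmp}$, so that $\cc X(\gmpn)\cong\Omega^\infty_{\gmp}(\gmpn\wedge ev_{\bb G_m}(\cc X))$ with $\gmpn\wedge ev_{\bb G_m}(\cc X)$ an $n$-effective bispectrum, and then \cite[7.5.1]{Lev} (the compatibility of the truncations $f_n$ with $\Omega^\infty_{\gmp}$) gives membership in $f_n(SH_{S^1}(k))$ in one step. You instead rederive the identification $\cc X(\gmpn)\cong\cc E(n)_f\cong\Omega^\infty_{\gmp}(\gmpn\wedge\cc E)$ directly from Lemmas~\ref{lemprem} and~\ref{lemst} (bypassing Theorem~\ref{ewq}), and then replace the $f_n$-compatibility by the $s_j$-compatibility $s_j\circ\Omega^\infty_{\gmp}=\Omega^\infty_{\gmp}\circ s_j$ for all $j\geq 0$, combined with $\gmp$-periodicity of slices and the tower $f_n\to\cdots\to f_0=\id$ in $SH_{S^1}(k)$. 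Both routes ultimately rest on the same input from Levine's comparison of the $\bb P^1$- and $S^1$-slice towers; yours is longer but makes explicit exactly where the hypotheses of Definition~\ref{deffrsp}(3) enter (namely, in placing $\cc X(\gmpn)$ correctly in the $\bb A^1$-local category before any slice-theoretic statement is applied), which the paper's citation of Theorem~\ref{ewq} leaves implicit. The one point to be careful about, which you correctly flag, is that the slice compatibility must be invoked for every $s_j$ with $0\leq j<n$ and not just $s_0$; this is covered by the full tower comparison in \cite{Lev}, so there is no gap.
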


\begin{proof}
This follows from Theorem~\ref{ewq} and~\cite[7.5.1]{Lev}.
\end{proof}

\end{document}